\newenvironment{claim}[1]{\par\noindent\underline{Claim:}\space#1}{}
\tikzstyle{noeud}=[circle,inner sep=2, minimum size =3 pt, line width = 1pt, draw=black, fill=white]
\definecolor{bleu}{rgb}{0.38, 0.31, 0.86}
\definecolor{rouge}{RGB}{162, 27, 43}
\newskip\@bigflushglue \@bigflushglue = -100pt plus 1fil
\def\bigcentering{\let\\\@centercr\rightskip\@bigflushglue%
\leftskip\@bigflushglue
\parindent\z@\parfillskip\z@skip}
\newcounter{NoTableEntry}
\renewcommand*{\theNoTableEntry}{NTE-\the\value{NoTableEntry}}
\DeclareMathOperator{\ex}{ex} 
\DeclareMathOperator{\bal}{bal} 
\DeclareMathOperator{\stab}{Stab} 
\DeclareMathOperator{\len}{len} 
\newtheorem{theorem}{Theorem}
\newtheorem{lemma}[theorem]{Lemma}
\newtheorem{remark}[theorem]{Remark}
\newtheorem{definition}[theorem]{Definition}
\newtheorem{proposition}[theorem]{Proposition}
\newtheorem{prop}[theorem]{Proposition}
\newtheorem{problem}[theorem]{Problem}
\newcommand{\cH}{\mathcal{H}}
\newcommand{\fer}{{\rm Fer}}
\newcommand{\LFR}[1]{\left\lfloor #1 \right\rfloor}
\newcommand{\lfr}[1]{\lfloor #1 \rfloor}
\newcommand{\LCR}[1]{\left\lceil #1 \right\rceil}
\newcommand{\lcr}[1]{\lceil #1 \rceil}
\def\restrict#1{\raise-.5ex\hbox{\ensuremath|}_{#1}}
\definecolor{lightblue}{RGB}{51,153,255} 
\definecolor{lime}{RGB}{168,181,49}
\definecolor{greenyellow}{RGB}{178,255,46}
\definecolor{purple}{RGB}{181,82,255}
\definecolor{salmon}{RGB}{250, 128, 11}
\newcommand{\Z}{\mbox{${\mathbb Z}$}}
\title{New recursive constructions of amoebas and their balancing number\thanks{This work has been supported by PAPIIT IG100822.}}
\author{
	Laura Eslava$^\S$ 
	\and Adriana Hansberg$^\ddagger$ 
        \and Tonatiuh Matos Wiederhold$^\mathparagraph$ 
        \and Denae Ventura$^{*}$ 
	\\ \\ \\
	$^\ddagger$ Instituto de Matem\'aticas, UNAM Juriquilla, 76230 Quer\'etaro, Mexico.
	\\
	$^\S$ IIMAS, UNAM Ciudad Universitaria, 04510 Mexico City, Mexico.
	\\
	$^\mathparagraph$ Dept. of Mathematics, University of Toronto, Toronto, Canada.
        \\
	$*$ Dept. of Mathematics, University of California at Davis, USA.}
\date{}
\begin{document}

\maketitle

\begin{abstract}
The definition of amoeba graphs is based on iterative \emph{feasible edge-replacements}, where, at each step, an edge from the graph is removed and placed in an available spot in a way that the resulting graph is isomorphic to the original graph. Broadly speaking, amoebas are graphs that, by means of a chain of feasible edge-replacements, can be transformed into any other copy of itself on a given vertex set (which is defined according to whether these are local or global amoebas). Global amoebas were born as examples of \emph{balanceable} graphs, which are graphs that appear with half of their edges in each color in any $2$-edge coloring of a large enough complete graph with a sufficient amount of edges in each color. The least amount of edges required in each color is called the \emph{balancing number} of $G$.

In a work by Caro et al. [Electronic Journal of Combinatorics, 30(3) P3.9 (2023)], by means of a Fibonacci-type recursion, an infinite family of global amoeba trees with arbitrarily large maximum degree is presented, and the question if they were also local amoebas is raised. In this paper, we provide a recursive construction to generate very diverse infinite families of local and global amoebas, by which not only this question is answered positively, it also yields an efficient algorithm that, given any copy of the graph on the same vertex set, provides a chain of feasible edge-replacements that one can perform in order to move the graph into the aimed copy. All results are illustrated by applying them to three different families of local amoebas, including the Fibonacci-type trees.

Concerning the balancing number of a global amoeba $G$, we are able to express it in terms of the extremal number of a class of subgraphs of $G$. By means of this, we give a general lower bound for the balancing number of a global amoeba $G$, and we provide linear (in terms of order) lower and upper bounds for the balancing number of our three case studies.

\end{abstract}

\section{Introduction}\label{sec:intro}


Amoeba graphs, or simply amoebas, were first introduced in~\cite{CHM19} by means of a graph theoretical definition. In \cite{CHM20}, for a better understanding of the structure of amoebas, a group theoretical setting is introduced in which they can also be defined. In that work, a distinction is made between two different classes: local amoebas and global amoebas, the latter ones corresponding to the amoebas defined in \cite{CHM19}. The property that makes amoebas work are iterative replacements of edges, where, at each step, some edge is substituted by another such that an isomorphic copy of the graph is created. We call such edge substitutions feasible edge-replacements. A global amoeba $G$ is a graph, such that, for $n$ large enough, any copy of $G$ embedded in $K_n$, the complete graph on $n$ vertices, can be moved to any other copy of $G$ in $K_n$ by means of a chain of feasible edge-replacements. A local amoeba is defined analogously with the difference that it is a spanning subgraph of $K_n$. It was shown in \cite{CHM20} that a graph $G$ is a global amoeba if and only if $G \cup K_1$ is a local amoeba. Hence, in the case of global amoebas, we can work with the complete graph $K_{n(G)+1}$. 

Amoebas are interesting because, due to their nice edge-replacement feature, they have certain interpolation properties that can be used in the context of unavoidable patterns in $2$-colorings of the complete graph, as is shown in \cite{CHM19}. Local amoebas played an important role in the search for zero-sum spanning subgraphs in \cite{CHLZ}. They are a special case of closed families, which are also related to bases of matroids, see also \cite{CHLZ}. Concerning global amoebas, it has been shown that they are balanceable and that bipartite global amoebas are omnitonal. A graph $G$ is \emph{balanceable} if, for $n$ large enough, there is an integer $k$ such that every $2$-edge coloring of a complete graph $K_n$ with more than $k$ edges in each color contains a colored copy of $G$ with half (ceiling or floor) of its edges in each color. The smallest such $k$ is called the \emph{balancing number} of $G$, denoted as $\bal(n,G)$. The definition of omnitonal graph is similar, with the difference that one can guarantee the existence of copies of the graph $G$ in every "tonal variation", meaning that there will be copies of $G$ with $r$ red edges and $b$ blue edges, for every pair $r, b \ge 0$ whose sum is $e(G)$, the number of edges of $G$. The balancing number is a parameter that is closely related to the \emph{extremal number} of graphs, denoted as $\ex(n,G)$ and defined as the maximum number of edges that a graph of order $n$ can have which does not contain $G$ as a subgraph.

Even though global amoebas have been determined to be balanceable \cite{CHM19}, nothing about their balancing number has been studied. In this work, we provide an equivalent expression for the balancing number of a global amoeba $G$ in terms of the extremal number of a class of subgraphs of $G$ which, in turn, is related to the partial sums of the degree sequence of $G$; see Theorems~\ref{thm:bal=ex} and \ref{thm:bal_lowbound}. 


In \cite{LGM23}, the authors develop an algorithm for the recognition of global and local amoebas and another one just for global and local amoeba trees. The bottleneck of the algorithm is a subroutine that checks for isomorphisms between graphs, for which there is a quasi-polynomial algorithm announced by Babai in \cite{Babai15} (see also \cite{DoBaHe17}). By means of that, the algorithm that recognizes amoebas can work in quasi-polynomial time as well. In the case of trees, it is known that the isomorphism problem is linear with respect to the number of vertices \cite{AhHoUl74, Ke57}, so the amoebas recognition-algorithm works here in polynomial time. Another interesting algorithmic matter that concerns this paper is the following question. 

\begin{problem}\label{problem}
Given a graph $G$ embedded in $K_n$ that is a global or local amoeba (where $n > n(G)$ in the case of global amoeba, and $n = n(G)$ in the case of local amoeba), and any copy $G'$ of $G$ in $K_n$, determine a chain of feasible edge-replacements that moves $G$ onto $G'$. 
\end{problem}

We will discuss this matter in Section \ref{sec:algorithm}, where we will present a quadratic-time algorithm (in terms of the number of vertices) that gives a solution to the above problem for families of local amoebas constructed via a recursion that is also presented in this work. Indeed, in the previous section (Section \ref{sec:recursive-loc-am}), we propose a general method of constructing infinite families of local amoebas by means of a recursion, see \Cref{dfn:series} and \Cref{thm:recursion}. Concerning this, various recursive constructions of global amoebas have been defined in the literature.  In \cite{CHM20}, the authors construct a family of Fibonacci-type trees $\mathcal{T}$ which they prove to be global amoebas. Other recursive constructions of families of global amoebas can be found in \cite{E2020, CHM21}, where the first reference includes the definition of the family $\mathcal{B}$ that is studied in this work and which is closely related to the family $\mathcal{A}$, with which we work here as well. We note in passing that constructions that yield amoeba trees or certain family of non-dense amoebas, as the one presented in this work, provide also, together with the fact that local amoebas are closed under complementation, a method for constructing a family of dense local amoebas. 

All throughout this paper, with the aim of illustrating the reader with some interesting examples, we accompany our results applying them to the three aforementioned families $\mathcal{T}, \mathcal{A}$ and $\mathcal{B}$. In particular:
\begin{enumerate}
    \item[(i)] We prove that these families consist of, indeed, local amoebas.
    \item[(ii)] We provide a more explicit, case-dependent expression of the algorithm that solves Problem \ref{problem}.
    \item[(iii)] We obtain upper and lower bounds on their balancing number. 
\end{enumerate}

The family $\mathcal{T}$ consists of trees built via a Fibonacci-type recursion and was originally given in \cite{CHM20} as an example of an infinite family of global amoebas with arbitrarily large maximum degree. With item (i), we solve the problem stated in~\cite{CHM20}, where it was asked if the trees in $\mathcal{T}$ were all local amoebas as well (the first five trees were shown there to be indeed local amoebas).

This work follows the next structure. In \Cref{sec:notation} we state notation and relevant preliminary results.  We also introduce the families $\mathcal{T}, \mathcal{A}$ and $\mathcal{B}$ as our three constructions under study. In \Cref{sec:recursive-loc-am}, we provide a recursive construction of local amoebas and its application to our case studies. \Cref{sec:balancing_glo_am} deals with the balancing number of global amoebas in general and how these results are applied to provide lower and upper bounds of the balancing number of our case studies. In \Cref{sec:algorithm}, we present an algorithm that solves \Cref{problem} for the family $\mathcal{T}$ and its implementation based on results from \Cref{sec:recursive-loc-am}. Finally, in \Cref{sec:conclusion} we give some concluding remarks and open problems.

\section{Notation and preliminary results}\label{sec:notation}

Let $X$ be a finite set and let $S_X$ be the symmetric group which consists of all permutations of elements of $X$. As usual, $S_n = S_{[n]}$, where $[n]=\{1,2, \cdots , n\}$. We also write $[n,m]=\{n,n+1,\ldots , m\}$ for integers $n<m$. The automorphism group of a graph $G$ is denoted as ${\rm Aut}(G)$, and so any graph $G$ of order $n$ satisfies that ${\rm Aut(G)}\cong S$ for some $S\leqslant S_n$. We use some basic group theory results in $S_n$. For a $k$-cycle $(i_1 i_2 \dots i_k)$ in $S_n$ and an arbitrary $\sigma \in S_n$, we have that  $\sigma (i_1 i_2 \dots i_k) \sigma^{-1}=(\sigma(i_1) \sigma(i_2) \dots \sigma(i_k)).$ For $n\ge 2$, it is a well-known fact that $S_n$ is generated by the set of transpositions $\{(1j):\, j\in \{2,\ldots, n\} \}$.

Let $X$ be a set and let $G$ be a group with identity $e$. A \emph{(left) group action} is a function $\theta : G\times X \to X$ such that for every $g\in G$ and $x\in X$, $gx=\theta(g,x)\in X$ and the following group action axioms hold. For every $g,h\in G$ and $x\in X$, $g(hx)=(gh)x$, and for every $x\in X$, $ex=x$, where $e$ is the neutral element in $G$. In this case, the group $G$ is said to act on the set $X$ (from the left). 

Consider a group $G$ acting on a set $X$. The \emph{orbit} of an element $x$ in $X$ is the set $Gx=\{gx \mid g\in G\}$. For every $x$ in $X$, the \emph{stabilizer subgroup} $G_x$ of $G$ with respect to $x$ is the set of all elements in $G$ that fix $x$. The action of $G$ on $X$ is called \emph{transitive} if, for any $x, y \in X$, there is an element $g\in G$ such that $gx =y$. 

The next definition involves the pasting of two functions with distinct domains.

\begin{definition}\label{dfn:fug}
Let $X, Y, A, B$ be sets and $f:X\to Y$ and $g:A\to B$ be two functions such that $f(x)=g(x)$ for all $x\in X\cap A$, then $f\cup g:X \cup A \to Y\cup B$ is defined as

\[ 
(f\cup g) (x) = 
\begin{cases} 
     f(x), & \text{if } x\in X\\
        g(x), & \text{if } x\in A\setminus X.
   \end{cases}
\]

\end{definition}

Along this work, we will consider graphs $G = G(V,E)$ on $n(G)$ vertices equipped with a labeling on their vertex set $\lambda : V \to X$, which will always be a bijection. We define $v_x = \lambda^{-1}(x)$, for each $x \in X$, and 
  $L_G = \{ij \mid v_i v_j \in E(G) \}$ with no distinction between $ij$ and $ji$. For each $\sigma \in S_X$,  let $G_{\sigma}$ be the copy of $G$ on the same vertex set $V$ defined by $E(G_{\sigma}) =  \{ v_iv_j  \;|\; \sigma(i)\sigma(j)\in L_G \}$. Notice that each labeled copy of $G$ on the vertex set $V$ corresponds to a permutation $\sigma \in S_X$ and vice versa. For every graph $G'$ on $V$ isomorphic to $G$, there are $|{\rm Aut}(G)|$ different copies $G_{\sigma}$ that correspond to $G'$ and, furthermore, the group $A_G = \{\sigma \in S_X \mid G_\sigma = G \}$ is isomorphic to ${\rm Aut}(G)$.  \\

Notice that the set of labels 
\[L_{G_{\sigma}} = \{\sigma (i) \sigma (j) \mid v_i v_j \in E(G) \}\] 
on the edges of $G_\sigma$ is the same for all $\sigma \in S_X$. The corresponding copies of the vertices and edges of $G$ in $G_{\sigma}$ are given by their labels: the copy of vertex $v_i$ of $G$ is the vertex of $G_\sigma$ having label $i$, while the copy of an edge $v_i v_j \in E(G)$ is the edge of $G_{\sigma}$ having label $ij$. \\

When two groups $P$ and $Q$ are isomorphic, we write $P\cong Q$. If two graphs $G$ and $H$ are isomorphic, we also write $G\cong H$. In any case, the context will be clear. Given $e\in E(G)$ and $e' \in E( \overline{G})$, the graph $G-e+e'$ is obtained from $G$ by performing the \emph{edge-replacement} that substitutes $e$ by $e'$. If $G-e+e'$ is a graph isomorphic to $G$, we say that the edge-replacement is \emph{feasible}. We consider also the so-called \emph{neutral edge-replacement} $\emptyset \to \emptyset$ as a feasible edge-replacement,  which is given when no edge is replaced at all. 

Let 
\[R_G = \{ rs\to kl \mid G- v_r v_s + v_k v_l \cong G \} \cup \{\emptyset \to \emptyset\}\]
be the set of all feasible edge-replacements of $G$ given by their labels and let $R_G^* = R_G \setminus \{\emptyset \to \emptyset\}$. We will use sometimes the notation $e \to e' \in R_G$ when we do not require to specify the labels of the vertices involved in the edge-replacement. Notice that $R_{G_{\rho}}=R_{G}$ for any $\rho \in S_X$, because any $e\to e' \in R_G$ also represents a feasible edge-replacement of any copy $G_{\rho}$ with $\rho \in S_X$. 

Moreover, the set $\fer_G(e\to e')$ consists of all permutations of labels that correspond to the feasible edge-replacement $e\to e'$. More precisely, for an edge-replacement $rs\to k\ell \in R_G^*$, a permutation of the labels $\sigma$ is an element of $\fer_G(rs\to k\ell)$ if and only if $G_\sigma\cong G-v_rv_s+v_kv_\ell$. For the neutral edge-replacement, we set $A_G = \fer_G(\emptyset \to \emptyset) \cong {\rm Aut}(G)$.
We denote by $\fer(G)$ the group generated by the permutations associated to all feasible edge-replacements, that is, $\fer(G)$ is generated by the set $$\mathcal E_G=\bigcup_{e\to e'\in R_G}\fer_G(e\to e').$$ The group $\fer(G)$ acts on the set $\{G_\rho\mid \rho\in S_X \}$ by $(\sigma,G_\rho)\mapsto G_{\sigma\rho}$ where $\sigma\in \fer(G)$. Note that employing this action exhibits what happens when a series of edge-replacements, associated to $\sigma$, is applied on a copy $G_{\rho}$ of $G$ which results in $G_{\sigma \rho}$. Being able to go from any copy $G_{\rho}$ to any other copy $G_{\rho'}$ by following a series of feasible edge replacements means that for any $\rho, \rho' \in S_X$, there exists $\sigma \in \fer(G)$ such that $\rho'=\sigma \rho$, meaning that $\fer(G)=S_X$.

As shown in \cite{CHM20}, the original definitions of local and global amoebas can be given by means of the group $\fer(G)$. In words, a graph $G$ on $n$ vertices is a local amoeba if and only if any other copy of $G$ on the same vertex set can be reached, from $G$, by a chain of feasible edge-replacements, or, equivalently if $\fer(G) \cong S_n$. Similarly, $G$ is a global amoeba if and only if any copy of $G \cup K_1$ can reach any other copy on the same vertex set by a chain of feasible edge-replacements.

\begin{definition}\label{dfn:amoeaba}
A graph $G$ of order $n$ is called a \emph{local amoeba} if $\fer(G) \cong S_n$, and it is called a \emph{global amoeba} if $\fer(G\cup K_1) \cong S_{n+1}$.    
\end{definition}

We will need several times the following useful proposition.

\begin{proposition}[\cite{CHM20}]\label{prop:local2global}
If $G$ is a local amoeba with $\delta(G) \in \{0, 1\}$, then $G \cup K_1$ is a local amoeba, and so $G$ is a global amoeba.  
\end{proposition}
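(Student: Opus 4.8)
The plan is to show directly that $\fer(G \cup K_1) \cong S_{n+1}$, using the hypothesis that $\fer(G) \cong S_n$ together with the low-degree assumption $\delta(G) \in \{0,1\}$. Label the vertices of $G$ by $[n]$ and let $n+1$ be the label of the isolated vertex in $G \cup K_1$. Since $\fer(G \cup K_1)$ acts on the copies $\{(G\cup K_1)_\rho \mid \rho \in S_{[n+1]}\}$, it suffices to produce enough feasible edge-replacements of $G \cup K_1$ so that the generated group contains a set of permutations known to generate $S_{n+1}$ — for instance all transpositions of the form $(1\, j)$ for $j \in \{2, \dots, n+1\}$, as recalled in \Cref{sec:notation}.

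First I would observe that every feasible edge-replacement $rs \to k\ell \in R_G^*$ is still a feasible edge-replacement of $G \cup K_1$ (the extra isolated vertex plays no role), so $\fer(G)$, viewed as permutations of $[n]$ fixing the label $n+1$, embeds into $\fer(G \cup K_1)$. By hypothesis this already gives us a copy of $S_n$ acting on $[n]$ inside $\fer(G \cup K_1)$; in particular we get all transpositions $(1\,j)$ with $j \le n$. It then remains only to realize a single transposition moving the label $n+1$, say $(j_0 \, (n+1))$ for some convenient $j_0 \le n$, as an element of $\fer(G \cup K_1)$: together with $S_n$ on $[n]$ this generates $S_{n+1}$.

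The key step is to exhibit such a transposition as a (product of) feasible edge-replacement(s). This is where the hypothesis $\delta(G) \in \{0,1\}$ enters. If $\delta(G) = 0$, then $G$ already has an isolated vertex $v$, say with label $j_0$; then $v$ and $v_{n+1}$ are two isolated vertices of $G \cup K_1$, and any feasible edge-replacement is unaffected by swapping their labels, so the transposition $(j_0\,(n+1))$ lies in $A_{G\cup K_1} = \fer_{G\cup K_1}(\emptyset \to \emptyset) \cong \mathrm{Aut}(G \cup K_1)$ and we are done. If $\delta(G) = 1$, pick a vertex $v$ of degree $1$ with neighbor $u$, and let $j_0 = \lambda(v)$. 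Then moving the pendant edge $uv$ to $u\,v_{n+1}$ is an edge-replacement $G \cup K_1 \to G \cup K_1$ that is feasible: the resulting graph has a pendant vertex at label $n+1$ and an isolated vertex at label $j_0$, which is isomorphic to $G \cup K_1$ via the map swapping those two labels. The permutation realizing this feasible edge-replacement, post-composed with an element of $S_n \le \fer(G\cup K_1)$ that repairs the internal labeling, yields exactly the transposition $(j_0\,(n+1))$ (or at least a permutation that together with $S_n$ on $[n]$ generates all of $S_{n+1}$).

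The main obstacle is the bookkeeping in the $\delta(G) = 1$ case: a single feasible edge-replacement need not correspond to a clean transposition, only to some permutation in the relevant coset, so one must argue carefully that the permutation obtained, combined with the already-available subgroup $S_n$ acting on $[n]$, generates $S_{n+1}$ rather than a proper subgroup. The clean way to package this is: $\fer(G \cup K_1)$ contains $S_n$ (fixing $n+1$) and contains at least one permutation not fixing $n+1$; since $S_n$ is a maximal subgroup of $S_{n+1}$ for $n \ge 2$ (and the small cases $n \le 2$ are trivial to check by hand), any overgroup of $S_n$ that is not contained in the stabilizer of $n+1$ must be all of $S_{n+1}$. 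This maximality argument sidesteps the need to pin down the exact transposition, and is the crux of the write-up. The last sentence of the proposition is then immediate from \Cref{dfn:amoeaba}.
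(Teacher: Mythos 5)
Your argument is correct. Note first that the paper does not prove this proposition at all: it is imported verbatim from \cite{CHM20}, so there is no in-paper proof to compare against. Your write-up is a valid self-contained derivation, and it is essentially the natural one: any $\sigma\in\fer_G(e\to e')$ extends by the identity on the new label $n+1$ to an element of $\fer_{G\cup K_1}(e\to e')$, so $\fer(G\cup K_1)$ contains the full symmetric group on $[n]$ fixing $n+1$; one then only needs a single element moving $n+1$. Two small remarks. First, in the $\delta(G)=1$ case your hedging about ``repairing the internal labeling'' is unnecessary: with $u=v_{i_0}$ and $v=v_{j_0}$ the pendant pair, one checks directly from the definition of $(G\cup K_1)_\sigma$ that the bare transposition $\sigma=(j_0\;(n{+}1))$ satisfies $(G\cup K_1)_\sigma=(G\cup K_1)-v_{i_0}v_{j_0}+v_{i_0}v_{n+1}$, so it lies in $\fer_{G\cup K_1}(i_0j_0\to i_0(n{+}1))\subseteq\mathcal E_{G\cup K_1}$ exactly, with no correction needed. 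Second, the maximality of the point stabilizer $S_n$ in $S_{n+1}$ holds for every $n\ge 1$ (a subgroup properly containing it is transitive on $[n+1]$, hence has order at least $(n+1)\cdot n!$), so no small-case check is required; alternatively, conjugating $(j_0\;(n{+}1))$ by the available $S_{[n]}$ already produces all transpositions $(j\;(n{+}1))$, which together with $S_{[n]}$ generate $S_{n+1}$. For context, the route in \cite{CHM20} goes through the equivalence that $G\cup K_1$ is a local amoeba if and only if every label can be carried by $\fer(G)$ onto a label of a degree-one vertex of $G$; since $\fer(G)=S_{[n]}$ is transitive and $\delta(G)\le 1$, that criterion is immediate, so your proof is in effect an unpacked, specialized version of the same mechanism.
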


\subsection{Three constructions under study}\label{sec:treedfns}

In this section, we introduce the families of trees $\mathcal{T},\mathcal{A}$ and $\mathcal{B}$ which we will analyse throughout the paper. Broadly speaking, these families are defined recursively as follows. The $k$-th tree in any of the families is obtained from connecting two previously defined graphs $H$ and $J$ by one of their maximum degree vertices, say $u$ and $v$, respectively. The choice of $H$ and $J$ changes from one family to another, as we specify shortly after. We note in passing that the recursions are well defined since such trees have either a unique vertex of maximum degree or two vertices of maximum degree that are similar (i.e., there is an automorphism sending the one into the other). 

First, the Fibonacci-type trees $\mathcal{T} = \{T_k \;|\; k \ge 1\}$ are defined the following way. Let $T_1$ and $T_2$ be both isomorphic to $K_2$, while for $k \ge 3$, $T_k= (H \cup J) + uv$ is built from a copy $H$ of $T_{k-1}$ and a copy $J$ of $T_{k-2}$ by adding an edge between a pair of vertices, one of $H$ and one of $J$, having each maximum degree in its respective tree; see \Cref{fig:Tk}. 

\begin{figure}
    \centering
    \includegraphics[width=0.6\textwidth]{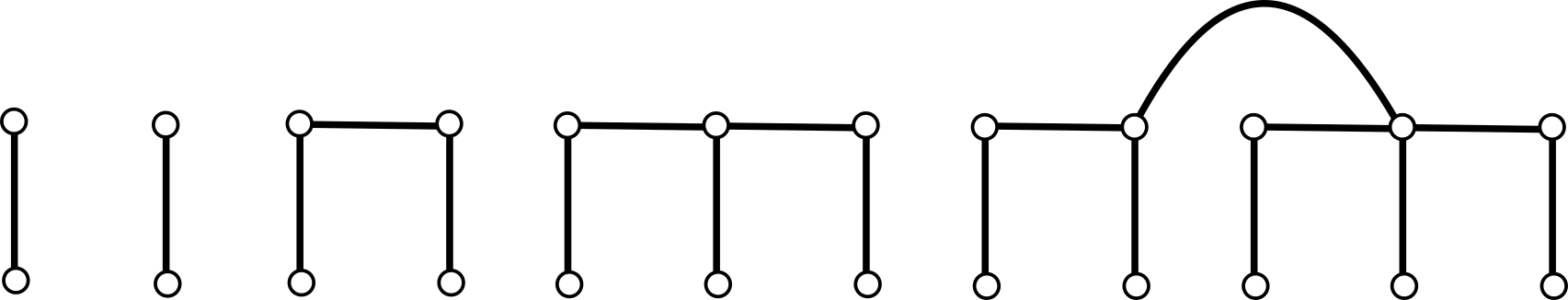}
    \caption{The first five Fibonacci-type trees $T_k$ for $1\leq k \leq 5$.}
    \label{fig:Tk}
\end{figure}

The second family of trees, $\mathcal{A}=\{A_k \mid k\geq 1 \}$, is defined the following way. Let $A_1$ be a single vertex. For $k \ge 2$, $A_k = (H \cup J)+uv$ where $H$ and $J$ are two disjoint copies of $A_{k-1}$, and $u$ and $v$ vertices of maximum degree in $H$ and $J$, respectively; see \Cref{fig:Ak}.

Thirdly, $\mathcal{B}=\{B_k \mid k\geq 1 \}$ may be defined in two equivalent ways. For $k\ge 1$, let $u$ be a vertex of maximum degree in $A_k$ and let $z$ be a new vertex (namely, $z\notin A_k$), then $B_k=A_k + uz$. It is straightforward to verify that this is equivalent to letting $B_1$ be isomorphic to $K_2$ and, for $k\ge 2$, letting $B_k = (H \cup J)+uv$, where $H$ is a copy of $A_{k-1}$ and $J$ is a copy of $B_{k-1}$ ($u$ and $v$ are vertices of maximum degree in $H$ and $J$, respectively); see \Cref{fig:Bk}. 

\begin{figure}
    \centering
    \includegraphics[width=0.5\textwidth]{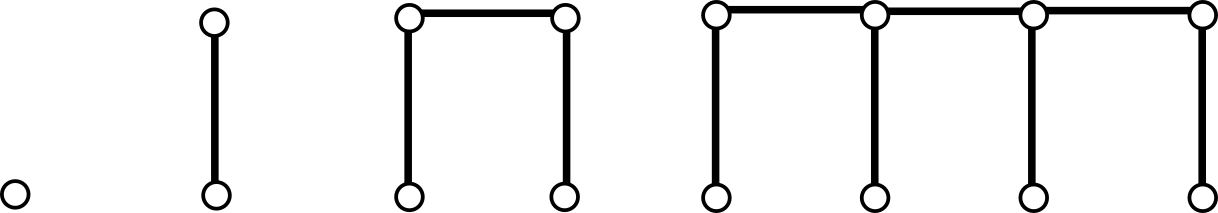}
    \caption{The first four $A_k$ trees for $1\leq k \leq 4$.}
    \label{fig:Ak}
\end{figure}

\begin{figure}
    \centering
    \includegraphics[width=0.5\textwidth]{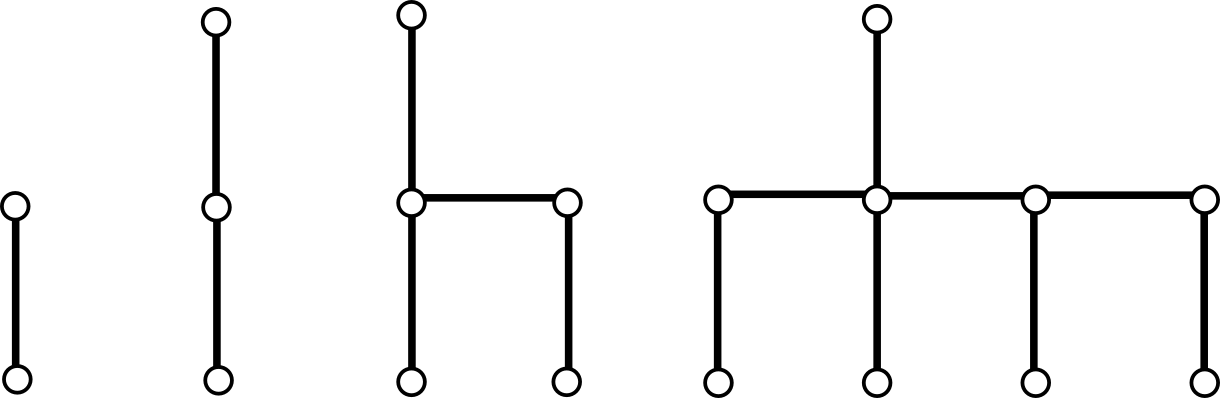}
    \caption{The first four $B_k$ trees for $1\leq k \leq 4$.}
    \label{fig:Bk}
\end{figure}

\section{Recursive local amoebas}\label{sec:recursive-loc-am}

In \cite{CHM20}, different ways of constructing local or global amoebas are presented. Further on, in \cite{CHM21}, the authors work with recursive constructions. In particular, a general method to construct families of global amoebas is developed. Using this method, the authors give an alternative proof of the fact that the Fibonacci-type trees from the family $\mathcal{T}$ are global amoebas. We will deal with this family in this section, too, and we will demonstrate that they are local amoebas as well, solving a problem stated in \cite{CHM20}. This will be achieved by means of a general construction that can be used to generate families of local amoebas recursively. By means of this method, it will also be shown that the families $\mathcal{A}$ and $\mathcal{B}$ consist of local amoebas, too.\\

For a graph $G$ provided with a labeling $\lambda: V(G) \to X$ on its vertices,  consider the set $\mathcal{E}_G^{i}$ of all permutations associated to edge replacements in $R_G$ that fix the label $i \in X$, that is,
$$\mathcal{E}_G^{i}= \mathcal{E}_G \cap \stab_{\fer(G)}(i).$$ 

Let $\fer^{i}(G)$ be the subgroup of $\stab_{\fer(G)}(i)$ generated by the set $\mathcal{E}_G^{i}$. We present the following warm-up lemmas. 

\begin{lemma}\label{lemma:extends}
Let $H$ and $J$ be two vertex disjoint graphs provided with their corresponding disjoint sets of labels $X$ and $Y$. Consider vertices $v_x \in V(H)$, $v_y \in V(J)$ with labels $x \in X$ and $y \in Y$, respectively, and the graph $G = (H \cup J)+v_xv_y$ with the inherited set of labels $X \cup Y$. If $\alpha \in \mathcal{E}^{x}_H$, then $\alpha \cup {\rm id}_{\fer(J)} \in \mathcal{E}^{x}_G$.
\end{lemma}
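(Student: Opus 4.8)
The plan is to trace carefully what the permutation $\alpha \in \mathcal{E}^x_H$ "does" as an element of $\fer(H)$ and verify that the naturally extended permutation on the larger label set $X \cup Y$ witnesses a feasible edge-replacement of $G$ that fixes the label $x$. First I would unpack the hypothesis: since $\alpha \in \mathcal{E}^x_H = \mathcal{E}_H \cap \stab_{\fer(H)}(x)$, there is some feasible edge-replacement $rs \to k\ell \in R_H$ (possibly the neutral one) with $\alpha \in \fer_H(rs \to k\ell)$ and $\alpha(x) = x$. Concretely this means $H_\alpha \cong H - v_r v_s + v_k v_\ell$, i.e. $\alpha$ is a permutation of $X$ sending the label set $L_H$ of $H$ to the label set of $H - v_rv_s + v_kv_\ell$.

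Next I would define the extension explicitly as $\beta = \alpha \cup \mathrm{id}_Y$ in the sense of \Cref{dfn:fug} (since $X \cap Y = \emptyset$, there is no compatibility condition to check), so $\beta \in S_{X \cup Y}$ with $\beta|_X = \alpha$ and $\beta|_Y = \mathrm{id}_Y$. I must be slightly careful with the statement's notation: "$\alpha \cup \mathrm{id}_{\fer(J)}$" should be read as pasting $\alpha$ with the identity permutation on the label set $Y$ of $J$; I would phrase it that way to avoid type confusion between group elements and permutations. Then $\beta$ clearly fixes $x$, so the remaining task is to show $\beta$ corresponds to a feasible edge-replacement of $G$, namely the very same $rs \to k\ell$ (viewed now inside $G$).

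The key computation is to show $G_\beta \cong G - v_r v_s + v_k v_\ell$. I would compute $L_{G_\beta}$ edge by edge against the label set $L_G = L_H \cup L_J \cup \{xy\}$: for a pair $ij$ with $i,j \in X$, $\beta$ acts as $\alpha$, so these edges transform exactly as in $H_\alpha$; for $ij$ with $i,j \in Y$, $\beta$ is the identity, so the edges of $J$ are untouched; and for the bridge pair $xy$, since $\beta(x) = \alpha(x) = x$ and $\beta(y) = y$, the bridge is fixed. Hence $G_\beta = (H_\alpha \cup J) + v_x v_y$, and since $H_\alpha \cong H - v_r v_s + v_k v_\ell$ via an isomorphism that can be taken to fix $v_x$ (this is exactly what $\alpha \in \stab_{\fer(H)}(x)$ buys us — the isomorphism realized by $\alpha$ fixes the vertex labeled $x$, hence the bridge endpoint in $H$), gluing back the untouched $J$ along the fixed bridge shows $G_\beta \cong (H - v_r v_s + v_k v_\ell \cup J) + v_x v_y = G - v_r v_s + v_k v_\ell$, provided $v_rv_s \ne v_xv_y$ — and if the original replacement were the bridge edge or the neutral one the claim is trivial or handled separately. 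Therefore $rs \to k\ell \in R_G$ and $\beta \in \fer_G(rs \to k\ell)$, and since $\beta(x) = x$ we conclude $\beta \in \mathcal{E}^x_G$.

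The main obstacle I anticipate is making the "gluing preserves isomorphism" step fully rigorous: one needs that the isomorphism $H_\alpha \to H - v_rv_s + v_kv_\ell$ induced by $\alpha$ restricted appropriately fixes the endpoint $v_x$ of the bridge, so that extending it by the identity on $V(J)$ yields an isomorphism of the glued graphs. This is precisely where the hypothesis $\alpha \in \stab_{\fer(H)}(x)$ (rather than merely $\alpha \in \mathcal{E}_H$) is essential, and I would make sure to invoke it at exactly that point. A secondary minor point is bookkeeping around the case where the edge-replacement witnessed by $\alpha$ happens to involve labels that coincide with $x$ or where $v_rv_s = v_xv_y$; these degenerate cases should be dispatched quickly, noting that $v_xv_y \notin E(H)$ anyway since $x \in X, y \in Y$ and the bridge is new, so no replacement coming from $R_H$ can touch it.
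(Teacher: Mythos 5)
Your proposal is correct and follows essentially the same route as the paper's proof: extend $\alpha$ by the identity on the label set of $J$, use $\alpha(x)=x$ to see that the bridge edge $v_xv_y$ and the role of $v_x$ are preserved, and conclude that $G_{\alpha\cup\mathrm{id}}=(H_\alpha\cup J)+v_xv_y\cong G$, so the same edge-replacement is feasible in $G$. Your version merely spells out the label-by-label computation and the gluing of the isomorphism fixing $v_x$, details the paper leaves implicit.
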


\begin{proof}
 Given $\alpha \in \mathcal{E}^{x}_H$, then there is a feasible edge replacement $e \to e' \in R_H$ such that $\alpha \in \fer_H(e \to e')$. Since $\alpha (x) = x$, the role $v_x$ is playing in $H$ is the same as in $H_{\alpha}$, and so the edge replacement $e \to e'$ can also be applied on $G$. It follows that 
 \[G_{\alpha \cup {\rm id}_{\fer(J)}} = (H_{\alpha} \cup J) +v_xv_y \cong (H \cup J)+v_xv_y = G,\] 
implying that $e \to e'$ is a feasible edge-replacement in $G$ and $\alpha \cup {\rm id}_{\fer(J)} \in \mathcal{E}^{x}_G$.
\end{proof}

\begin{definition}[Stem-symmetric graph]\label{dfn:series}
Let $G$ be a graph and $v \in V(G)$, and let $\lambda: V(G) \to X$ be a labeling of $G$ and $b = \lambda(v)$. We say that $G$ is \emph{stem-symmetric with respect to $v$} if $\fer^{b} (G)\cong S_{n(G)-1}.$
\end{definition}

The following lemma concerns stem-symmetric graphs $G$. It states that $G$ is, in fact, a local amoeba provided we can exhibit a suitable, additional edge-replacement.

\begin{lemma}\label{lem:stem-sym_local}
    Let $G$ be a labeled graph that is stem-symmetric with respect to a vertex $v$, whose label is $b$. If
    \[\fer(G) \setminus {\rm Stab}_G(b) \neq \emptyset,\] 
    then $G$ is a local amoeba.
\end{lemma}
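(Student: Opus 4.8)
The plan is to show that $\fer(G)$ acts transitively on $X = \lambda(V(G))$ and, combined with stem-symmetry, conclude that $\fer(G) \cong S_{n(G)}$. Recall the standard fact that a subgroup $K \leqslant S_n$ equals $S_n$ as soon as (a) $K$ is transitive on $[n]$ and (b) $K$ contains $S_{n-1}$ realized as the stabilizer of some point. Since $G$ is stem-symmetric with respect to $v$, we already have $\fer^b(G) \cong S_{n(G)-1}$, where $\fer^b(G)$ is generated by edge-replacement permutations fixing the label $b$; thus $\fer^b(G) \leqslant \stab_{\fer(G)}(b)$ gives us a copy of $S_{n(G)-1}$ sitting inside $\fer(G)$ as (a subgroup of) the stabilizer of $b$. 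Hence, once transitivity of $\fer(G)$ on $X$ is established, we are done: the orbit of $b$ is all of $X$, the stabilizer of $b$ has order at least $(n(G)-1)!$, so $|\fer(G)| \geq n(G)!$, forcing $\fer(G) \cong S_{n(G)}$, i.e. $G$ is a local amoeba by Definition~\ref{dfn:amoeaba}.

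So the real content is transitivity. First I would use the hypothesis $\fer(G) \setminus \stab_G(b) \neq \emptyset$: pick $\tau \in \fer(G)$ with $\tau(b) = c \neq b$. This says the $\fer(G)$-orbit of $b$ contains at least one other label $c$. Now I would leverage stem-symmetry to spread this around: since $\fer^b(G) \cong S_{n(G)-1}$ acts as the full symmetric group on $X \setminus \{b\}$, for every $d \in X \setminus \{b\}$ there is $\pi \in \fer^b(G) \leqslant \fer(G)$ with $\pi(c) = d$. Then $\pi \tau \in \fer(G)$ sends $b \mapsto c \mapsto d$, so $d$ lies in the orbit of $b$ as well. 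Since $d$ was arbitrary in $X \setminus \{b\}$, the orbit of $b$ under $\fer(G)$ is all of $X$, which is exactly transitivity.

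The step I expect to require the most care is the bookkeeping around the isomorphism $\fer^b(G) \cong S_{n(G)-1}$ and its action: one must be careful that this abstract isomorphism really corresponds to the natural action on the $n(G)-1$ labels in $X \setminus \{b\}$ (this should follow from how $\fer^b(G)$ is defined as a subgroup of $\stab_{\fer(G)}(b) \leqslant S_X$, since the only subgroup of $\mathrm{Sym}(X \setminus \{b\})$ isomorphic to $S_{n(G)-1}$ and acting the way a generated-by-transposition-type group does is the full one — or more cleanly, $\stab_{S_X}(b) \cong S_{n(G)-1}$ and a subgroup isomorphic to $S_{n(G)-1}$ inside a group of the same order must be everything). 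Everything else is a routine application of the orbit-stabilizer counting argument; no delicate computation with specific edge-replacements is needed beyond invoking the two hypotheses.
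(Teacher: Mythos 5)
Your proof is correct and takes essentially the same route as the paper: the paper's proof is a one-line appeal to the group-theoretic fact (cited from Rotman) that a subgroup of $S_X$ containing a point stabilizer acting as the full symmetric group on $X\setminus\{b\}$ together with one permutation moving $b$ must be all of $S_X$, and your orbit--stabilizer argument is precisely a proof of that fact, including the correct observation that a subgroup of $\stab_{S_X}(b)$ isomorphic to $S_{n(G)-1}$ must equal it by order considerations.
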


\begin{proof}
    Let $X$ be the set of labels of $G$. Recall the following group-theoretic result (see, e.g., \cite{R12}). If $x \in X$ and $S \subseteq S_{X \setminus \{x\}}$ is a set of permutations that act transitively on $X \setminus \{x\}$, and if $\varphi \in S_X$ with $\varphi(x) \neq x$, then the set $S \cup \{ \varphi \}$ generates $S_X$. The statement follows by setting $x=b$ and $S = \fer^{b}(G)$.
\end{proof}

\begin{figure}
    \centering
    \includegraphics[width=.7\textwidth]{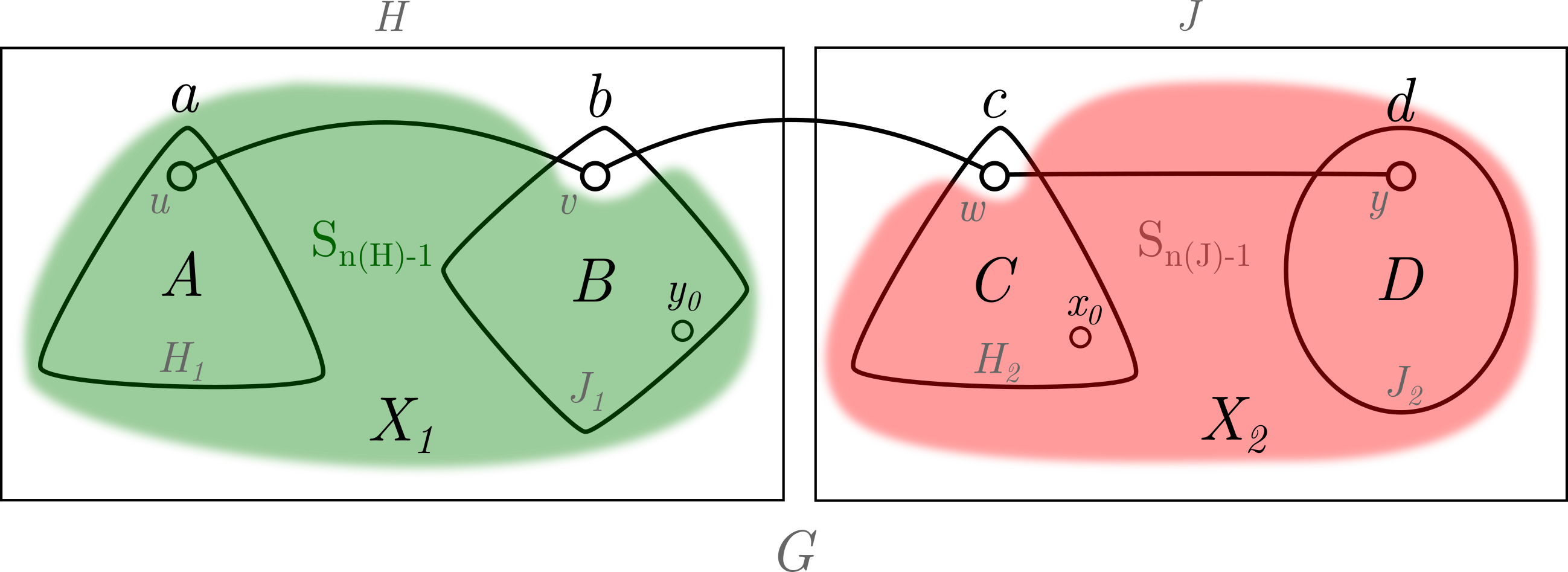}
    \caption{General diagram of \Cref{thm:recursion}.}
    \label{general_diag}
\end{figure}

The theorem below gives a general method by which one can construct local amoebas via a recursion. By \Cref{prop:local2global}, these can be global amoebas, too, if the minimum degree is at most $1$. By means of this theorem, we will be able to prove that the families $\mathcal{T}$, $\mathcal{A}$ and $\mathcal{B}$ consist of local amoebas.

\begin{theorem}\label{thm:recursion}
    Let $H_1, J_1, H_2, J_2$ be vertex disjoint graphs provided with the roots $u, v, w, y$, respectively, and such that $H_1 \cong H_2$ and $u$ is similar to $w$. Let $H = (H_1 \cup J_1) + uv$ be stem-symmetric with respect to $v$, and $J= (H_2 \cup J_2) + wy$ stem-symmetric with respect to $w$. Let $G = (H \cup J) + vw$ be labeled and let $b$ the label on $v$. Then we have the following facts.
    \begin{enumerate}
        \item[(i)] $G$ is stem-symmetric with respect to $v$.
        \item[(ii)] If $\fer(G) \setminus {\rm Stab}_G(b) \neq \emptyset$, then $G$ is a local amoeba.
        \item[(iii)] If $\fer(G) \setminus {\rm Stab}_G(b) \neq \emptyset$ and $\delta(G) \le 1$, then $G$ is a global and a local amoeba.
    \end{enumerate}
    
\end{theorem}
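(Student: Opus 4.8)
The plan is to establish part (i) first, since parts (ii) and (iii) then follow immediately: (ii) is just an application of \Cref{lem:stem-sym_local} to the stem-symmetric graph $G$ together with the hypothesis $\fer(G)\setminus\stab_G(b)\neq\emptyset$, and (iii) adds \Cref{prop:local2global} on top of (ii), using $\delta(G)\le 1$. So the heart of the matter is showing $\fer^b(G)\cong S_{n(G)-1}$, i.e. that the feasible edge-replacements of $G$ that fix the label $b$ on $v$ act as the full symmetric group on the remaining $n(G)-1$ labels.

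The key idea is to feed the two given stem-symmetric structures into $G$ via \Cref{lemma:extends}, and then glue their actions together using an extra transposition-like element. First I would set up labels: write $X_H$ for the labels of $H$ and $X_J$ for those of $J$, with $b$ the label of $v\in V(H)$ and, say, $c$ the label of $w\in V(J)$; the label set of $G$ is $X_H\cup X_J$. Since $H$ is stem-symmetric with respect to $v$, $\fer^b(H)\cong S_{X_H\setminus\{b\}}$ acts transitively (indeed as the full symmetric group) on $X_H\setminus\{b\}$; by \Cref{lemma:extends}, every $\alpha\in\mathcal{E}^b_H$ lifts to $\alpha\cup\mathrm{id}\in\mathcal{E}^b_G$, so $\fer^b(G)$ contains a copy of $S_{X_H\setminus\{b\}}$ acting on $X_H\setminus\{b\}$ and fixing all of $X_J$. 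Symmetrically, because $J$ is stem-symmetric with respect to $w$, I would like $\fer^c(J)\cong S_{X_J\setminus\{c\}}$ to lift into $\fer^b(G)$; here one must be careful, since what we need are permutations fixing $b$, not fixing $c$ — but the relevant edge-replacements of $J$ live entirely inside $J$ and hence automatically fix every label of $H$, in particular $b$. The natural analogue of \Cref{lemma:extends} (with the roles of $H$ and $J$ swapped, and using that $vw$ is the connecting edge) gives $\mathrm{id}\cup\beta\in\mathcal{E}^b_G$ for each $\beta\in\mathcal{E}^c_J$, so $\fer^b(G)$ also contains $S_{X_J\setminus\{c\}}$ acting on $X_J\setminus\{c\}$ and fixing $X_H$.

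At this point $\fer^b(G)$ contains two symmetric groups whose supports are $X_H\setminus\{b\}$ and $X_J\setminus\{c\}$; these supports are disjoint and together miss exactly the labels $b$ and $c$ (we need $b$ fixed, so that is fine) — but they do not yet connect the two blocks, nor do they move $c$. To finish I need one further feasible edge-replacement of $G$, fixing $b$, whose associated permutation moves a label of $X_H\setminus\{b\}$ to $c$ (or otherwise links the two blocks and frees $c$). This is where the hypotheses $H_1\cong H_2$ and "$u$ similar to $w$" come in: the isomorphism $H\to J$-type structure around these roots should produce a feasible edge-replacement that detaches the pendant copy $H_1$ (rooted at $u$) and reattaches an isomorphic copy at the appropriate spot, effectively swapping the roles of two vertices — one being $w$, whose label is $c$ — while keeping $v$ (label $b$) fixed. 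Concretely, I expect to exhibit an element $\tau\in\fer^b(G)$ that acts on $X_H\cup X_J\setminus\{b\}$ and does not fix $c$. Then the group-generation fact used in \Cref{lem:stem-sym_local} (a set of permutations acting transitively on a set together with one more permutation moving the "missing" point generates the whole symmetric group), applied inside $S_{X\setminus\{b\}}$ with the transitive set $S=\fer^b(H')\cup\fer^b(J')$ on $X_H\setminus\{b\}\ \cup\ (X_J\setminus\{c\})$ and the extra element $\tau$, yields $\fer^b(G)\cong S_{n(G)-1}$. (One small bookkeeping point: one must check the union of the two lifted symmetric groups acts transitively on $(X_H\cup X_J)\setminus\{b,c\}$ before adjoining $\tau$; since each acts as a full symmetric group on its block, this is immediate once $\tau$ links the blocks, so it is cleanest to first adjoin $\tau$ to bridge and then invoke transitivity.)

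The main obstacle is the third step: producing the extra edge-replacement $\tau$ fixing $b$ and moving $c$, and verifying it is genuinely feasible in $G$ — i.e. that after detaching $H_1$ from $u$ and reattaching an isomorphic copy, the resulting graph is isomorphic to $G$. This is exactly where one must use $H_1\cong H_2$ and the similarity of $u$ and $w$ in an essential way (the general diagram in \Cref{general_diag} presumably depicts this replacement), and it is the only place where the recursive structure of $G$ as $(H\cup J)+vw$ with $H=(H_1\cup J_1)+uv$, $J=(H_2\cup J_2)+wy$ is really needed beyond the two stem-symmetry hypotheses. Everything else — the two invocations of \Cref{lemma:extends} and the final symmetric-group generation argument — is routine given the tools already developed in this section.
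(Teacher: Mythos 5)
Your overall architecture is the same as the paper's: lift $\fer^{b}(H)$ and $\fer^{c}(J)$ into $\fer^{b}(G)$ via \Cref{lemma:extends}, then bridge the two blocks with one additional permutation coming from an edge-replacement that exploits $H_1\cong H_2$ and the similarity of $u$ and $w$, and finally invoke generation of the symmetric group by transpositions. Parts (ii) and (iii) are handled exactly as in the paper, via \Cref{lem:stem-sym_local} and \Cref{prop:local2global}.

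The one genuine gap is the step you yourself flag as the main obstacle: you never exhibit the bridging element $\tau$, and the candidate you sketch is not the right move. You propose to ``detach the pendant copy $H_1$ (rooted at $u$) and reattach an isomorphic copy at the appropriate spot''; but $H_1$ is attached to the rest of $G$ only through the edge $uv$, and replacing $uv$ by an edge from $u$ to some other vertex does not in general return a graph isomorphic to $G$ (and a replacement touching $uv$ risks disturbing $b$). The paper's replacement instead transfers $J_2$: it removes the edge $wy$ and adds the edge $uy$, i.e.\ the labeled replacement $cd\to ad$. Feasibility is immediate because $H_1\cong H_2$ with $u$ sent to $w$, so the resulting graph is obtained from $G$ by exchanging the roles of $H_1$ and $H_2$; the associated permutation is $\rho=\prod_{x\in A}\bigl(x\ \omega(x)\bigr)$, where $\omega:A\to C$ is induced by that isomorphism. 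This $\rho$ fixes $b$ (and all of $B\cup D$) and sends $c$ to $a$, which is exactly what your argument needs: conjugating the lifted transpositions $(\omega^{-1}(x)\ y_0)$ by $\rho$ covers all $x\in C$ including $x=c$, and the $x\in D$ case follows from the lifted $\fer^{c}(J)$ together with $(x_0\ y_0)$ for a fixed $x_0\in C\setminus\{c\}$. Once $\rho$ is supplied, the rest of your outline (including the order in which you adjoin the bridge and invoke transitivity) goes through and matches the paper.
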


\begin{proof}
\mbox{}\\
(i) We will show first that $G$ is stem-symmetric with respect to $v$.
Let $\lambda(u) = a$, $\lambda(v) = b$, $\lambda(w) = c$, and $\lambda(y) = d$. Let $\lambda(V(H_1)) = A$, $\lambda(V(J_1)) = B$, $\lambda(V(H_2)) = C$, and $\lambda(V(J_2)) = D$, and $X = X_1 \cup X_2$ (clearly, $X_1 = A \cup B$, and $X_2 = C \cup D$). See \Cref{general_diag} for a general diagram of the proof. We take a fixed element $y_0 \in B \setminus \{b\}$ and we will prove that $(x \; y_0) \in \fer^b(G)$ for every $x \in X \setminus \{b\}$, which would yield that $\fer^b(G) \cong S_{n(G)-1}$.

 To this aim, let $\omega : A \to C$ be the bijection induced by an isomorphism from $H_1$ to $H_2$ that sends $u$ to $w$. Then we have $\omega(a) = c$. Note that $cd \to ad$ is a feasible edge-replacement of $G$ that yields the permutation $\rho= \prod_{x \in A} (x\; \omega(x) ) \in \fer_G(cd \to ad)$ which interchanges the elements in $A$ and $C$ by means of  $\omega$ and fixes everything else. In particular, $\rho \in \fer^b(G)$.
 
 Because $\fer^{b}(H) \cong S_{n(H)-1}$, we know that $(x\; y_0)\in \fer^{b}(H)$ for any $x\in (A \cup B) \setminus \{b\}$. Hence, by \Cref{lemma:extends}, $(x\; y_0) \in \fer^{b}(G)$ for any $x\in (A \cup  B) \setminus \{b\}$. 

Take now any $x \in C$ and consider $\omega^{-1}(x) \in A$. The permutations $\rho$ and $(\omega^{-1}(x)\; y_0)$ are contained in $\fer^b(G)$, and thus $(x \; y_0) = \rho (\omega^{-1}(x)\; y_0) \rho^{-1} \in \fer^b(G)$ for any $x \in C$. 

Finally, we consider an arbitrary $x \in D$. Take now a fixed $x_0 \in C \setminus\{c\}$, and observe that $(x \; x_0) \in \fer^{c}(J) \cong S_{n(J)-1}$. By means of \Cref{lemma:extends}, we can even say that $(x \; x_0) \in \fer^{b}(G)$. Since $(x_0 \; y_0) \in \fer^b(G)$, we conclude that $(x \; y_0) = (x \; x_0) (x_0 \; y_0) (x \; x_0) \in \fer^b(G)$.

 Hence, we have considered all possibilities for $x \in X \setminus\{b\}$, and we can assert that $(x \; y_0) \in \fer^b(G)$ for every $x \in X \setminus \{b\}$. Since such a set of permutations generates the symmetric group on $X \setminus \{b\}$, it follows that $\fer^{b}(G) \cong S_{n(G)-1}$, and we are done.

\noindent
 (ii) Since, by (i), $G$ is stem-symmetric with respect to $v$, then, together with the assumption that $\fer(G) \setminus {\rm Stab}_G(b) \neq \emptyset$, $G$ is a local amoeba by \Cref{lem:stem-sym_local}.

 \noindent
 (iii) This follows by (ii) and \Cref{prop:local2global}.
\end{proof}

Observe that the direct application of this method in a recursive manner results in the construction of a family of quite sparse graphs as we are adding each time a single edge connecting two smaller graphs. However, as complements of local amoebas are again local amoebas \cite{CHM20}, it is evident that there are also families of dense local amoebas that can be constructed recursively by these tools, too. 

\subsection{Application to our case studies}\label{subsec:recursive_app}

Theorems~\ref{thm:Tk_Local} and \ref{thm:ABk_local} use \Cref{thm:recursion} to prove that our tree constructions are local amoebas and global amoebas.

\begin{figure}
    \centering
    \includegraphics[width=0.5\textwidth]{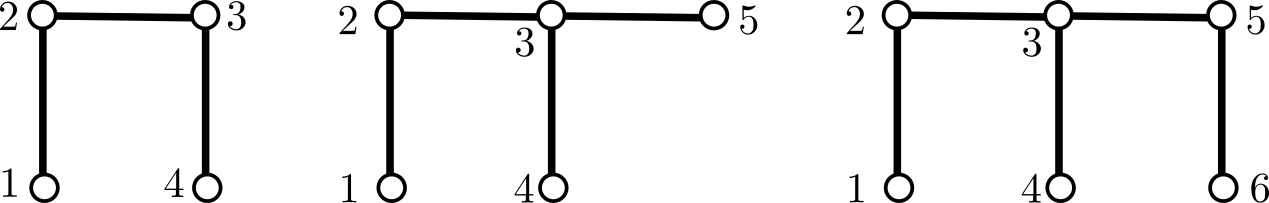}
    \caption{A labeling of $A_3=T_3$, $B_3$ and $T_4$.}
    \label{a3b3t4}
\end{figure}

\begin{theorem}\label{thm:Tk_Local}
For every $k\ge 1$, the Fibonacci-type tree $T_k \in \mathcal{T}$ is both a local and global amoeba.
\end{theorem}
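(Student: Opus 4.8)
The plan is to prove Theorem~\ref{thm:Tk_Local} by induction on $k$, using \Cref{thm:recursion} as the engine for the inductive step. Recall that $T_k$ is defined from a copy $H$ of $T_{k-1}$ and a copy $J$ of $T_{k-2}$ joined by an edge $uv$ between maximum-degree vertices. To fit the hypotheses of \Cref{thm:recursion}, I first need to recast this two-term Fibonacci recursion as a ``join of two stem-symmetric halves'' statement: write $H = (H_1 \cup J_1) + uv$ with $H_1$ a copy of $T_{k-2}$ and $J_1$ a copy of $T_{k-3}$ (so $H$ is indeed $T_{k-1}$ by the recursion), and $J = (H_2 \cup J_2) + wy$ with $H_2$ a copy of $T_{k-2}$ and $J_2$ a copy of $T_{k-3}$ (so $J$ is $T_{k-1}$ — wait, we want $J \cong T_{k-2}$). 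I should instead take $H_2$ a copy of $T_{k-3}$ and $J_2$ a copy of $T_{k-4}$, so that $J = (H_2 \cup J_2)+wy$ is a copy of $T_{k-2}$. Then $G = (H \cup J)+vw$ is exactly $T_k$ (connecting $T_{k-1}$ and $T_{k-2}$ at max-degree vertices). The condition $H_1 \cong H_2$ of \Cref{thm:recursion} would then demand $T_{k-2} \cong T_{k-3}$, which is false, so this naive splitting does not work directly; I will need to be more careful about which internal subtrees play the roles of $H_1, H_2$, choosing them to be isomorphic copies of a common smaller Fibonacci tree and verifying that the roots $u, w$ are similar maximum-degree vertices in their respective halves.

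Concretely, the inductive hypothesis I will carry is stronger than ``$T_k$ is a local amoeba'': I will prove that for all $k$, $T_k$ is \emph{stem-symmetric with respect to a maximum-degree vertex} $v_k$, together with the fact (to be checked) that $\fer(T_k) \setminus \mathrm{Stab}_{T_k}(b) \neq \emptyset$ where $b = \lambda(v_k)$. Stem-symmetry is the property that genuinely propagates through \Cref{thm:recursion}(i), and once it holds, parts (ii) and (iii) of that theorem immediately give that $T_k$ is both a local and a global amoeba (global because $\delta(T_k) = 1$ for every tree on at least two vertices, and $T_1, T_2 \cong K_2$). The base cases $k = 1, 2, 3, 4$ I will handle by hand — for instance $K_2$ is trivially stem-symmetric with respect to either endpoint, and small $T_k$ can be checked directly or cited from \cite{CHM20}, where the first five trees were already shown to be local amoebas.

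The main obstacle will be the bookkeeping needed to present $T_k$ in the form required by \Cref{thm:recursion} with the $H_1 \cong H_2$ and ``$u$ similar to $w$'' conditions genuinely satisfied. Since the Fibonacci recursion mixes $T_{k-1}$ and $T_{k-2}$, the two ``halves'' $H$ and $J$ of $T_k$ are not isomorphic, but \Cref{thm:recursion} only requires that the \emph{distinguished subgraphs} $H_1 \subseteq H$ and $H_2 \subseteq J$ be isomorphic. Unrolling one more level: $H = T_{k-1} = (T_{k-2} \cup T_{k-3}) + uv$ and $J = T_{k-2} = (T_{k-3} \cup T_{k-4}) + wy$, so I can take $H_1$ to be the $T_{k-3}$-summand of $H$ (a copy of $T_{k-3}$) and $H_2$ to be the $T_{k-3}$-summand of $J$ (also a copy of $T_{k-3}$) — now $H_1 \cong H_2 \cong T_{k-3}$. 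I then need the roots: $u$ must be a max-degree vertex of $H_1 = T_{k-3}$ that is similar to the chosen max-degree vertex $w$ of $H_2 = T_{k-3}$; since $H_1$ and $H_2$ are both copies of the same tree $T_{k-3}$, any two max-degree vertices of it are similar by the remark in Section~\ref{sec:treedfns} that such trees have a unique max-degree vertex or two similar ones. The remaining subtlety is checking that $H = (H_1 \cup J_1)+uv$ with this labeling is stem-symmetric with respect to $v$, and likewise $J$ with respect to $w$ — but this is precisely the inductive hypothesis applied to $T_{k-1}$ and $T_{k-2}$, provided I set up the induction so that the stem vertex is always one of these ``connector'' vertices of maximum degree. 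I will also need the small observation that the ``extra edge-replacement'' condition $\fer(G)\setminus\mathrm{Stab}_G(b)\neq\emptyset$ holds; this should follow from exhibiting one concrete feasible edge-replacement in $T_k$ that moves the stem label $b$, e.g.\ sliding a leaf or using the edge-replacement $cd \to ad$ from the proof of \Cref{thm:recursion} composed with something that displaces $b$, and I expect this to be routine once the structure is in place.
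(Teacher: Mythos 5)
Your plan is correct and is essentially the paper's own proof: the same strengthened induction hypothesis (stem-symmetry with respect to the maximum-degree vertex), the same unrolled decomposition with $H_1\cong H_2\cong T_{k-3}$, $J_1\cong T_{k-2}$, $J_2\cong T_{k-4}$, base cases $k\le 4$ checked by hand, and \Cref{thm:recursion} driving the inductive step. The only piece you leave vague --- the permutation displacing the stem label $b$ --- is supplied in the paper by the feasible edge-replacement $ab\to ac$, whose associated permutation swaps the two copies $J_1\cong J\cong T_{k-2}$ and sends $b$ to $c$.
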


\begin{proof}
Let $k \ge 1$ and let $X$ be the set of labels for $T_k$. Let $v_i$ be the vertex of $T_k$ with label $i \in X$ and let $v_b$ be a vertex of maximum degree in $T_k$. We use induction on $k$ and \Cref{thm:recursion}(i) to prove that $T_k$ is stem-symmetric with respect to $v_b$, then employ~\Cref{lem:stem-sym_local} to prove that $T_k$ is a local amoeba.  For $k=1,2$, the result is trivial. For the cases when $k=3,4$, we proceed to prove that $\fer^{b}(T_3) \cong S_{3}$ and $\fer^{b}(T_4) \cong S_{5}$, and conclude that $T_3$ and $T_4$ are local amoebas using \Cref{lem:stem-sym_local}. Let $T_3$ have the labels $\{1,2,3,4\}$ which are arranged as in \Cref{a3b3t4}. We use the permutations $(1\,2)\in \fer^3 _{T_3}(32\to 31)$ and $(2\,4)\in \fer^3 _{T_3}(12\to 14)$ to generate $\fer^3_{T_3}\cong S_3$. Since $(1\,2\,3\,4)\in\fer_{T_3}(43\to 41)  \in \fer(T_3) \setminus \stab_{\fer(T_3)}(3)$, we obtain with \Cref{lem:stem-sym_local} that $\fer(T_3) \cong S_4$. For $k=4$, let $\{1,2,3,4,5,6\}$ be the labels of $T_4$ arranged as in \Cref{a3b3t4}. We use the permutations $(1\,2)\in \fer^3 _{T_4}(32\to 31)$, $(2\,4)\in \fer^3 _{T_4}(12\to 14)$, $(4\,5)\in \fer^3 _{T_4}(65\to 64)$ and $(56)\in \fer^3 _{T_4}(35\to 36)$ to generate $\fer^3 _{T_4}\cong S_5$. Now consider the permutation $(3\,5)(4\,6)\in \fer_{T_4}(23\to 25)  \in \fer(T_4) \setminus \stab_{\fer(T_4)}(3)$. By \Cref{lem:stem-sym_local}, it follows that $ \fer(T_4) \cong S_6$.

Now let $k \ge 5$ and assume that  $T_i$ is stem-symmetric with respect to its vertex of maximum degree, for every $i < k$. By construction, $T_k$ consists of subtrees $H \cong T_{k-1}$ and $J \cong T_{k-2}$ and an edge $v_bv_c$ joining their vertices $v_b \in V(H)$ and $v_c \in V(J)$ of maximum degree. 
 As $H \cong T_{k-1}$, there are subtrees $H_1 \cong T_{k-3}$ and $J_1 \cong T_{k-2}$ of $H$, with $v_a$ and $v_b$ the vertices of maximum degree in $H_1$ and $J_1$, respectively, such that $H = (H_1 \cup J_1) + v_av_b$. Similarly, there are subtrees $H_2 \cong T_{k-3}$ and $J_2 \cong T_{k-4}$ of $J$, with $v_c$ and $v_d$ the vertices of maximum degree in $H_2$ and $J_2$, respectively, such that $J = (H_2 \cup J_2) + v_cv_d$. Clearly, there is an isomorphism from $H_1$ to $H_2$ that sends $v_a$ to $v_c$. Moreover, by the induction hypothesis, $\fer^{b}(H) \cong S_{n(H)-1}$ and $\fer^{c}(J) \cong S_{n(J)-1}$. Hence, we can use \Cref{thm:recursion}(i) to conclude that $T_k$ is stem-symmetric with respect to $v_b$.

 Now consider the feasible edge replacement $ab \to ac$ of $T_k$ 
and take the permutation $\varphi \in \fer_{T_k}(ab \to ac)$ induced by an isomorphism from $J_1 \cong T_{k-2}$ to $J \cong T_{k-2}$ that sends $v_b$ to $v_c$, i.e. we have $\varphi(b) = c \neq b$. It follows by \Cref{thm:recursion}(ii) that $T_k$ is a local amoeba.

 We conclude, by induction, that every Fibonacci-type tree $T_k \in \mathcal{T}$ is a local amoeba for every $k\in \mathbb{Z}^{+}$.
\end{proof}

\begin{figure}
    \centering
    \includegraphics[width=0.8\textwidth]{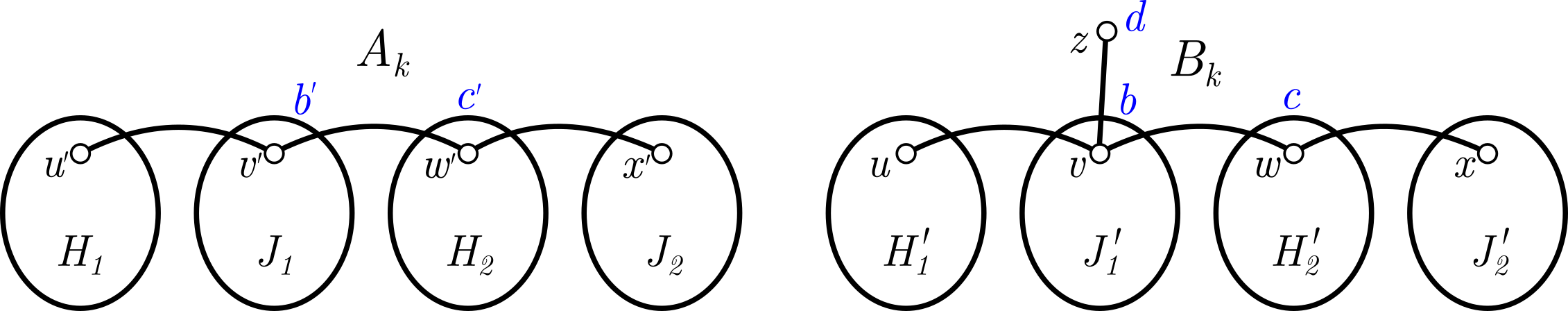}
    \caption{Proof diagrams of \Cref{thm:ABk_local}.}
    \label{fig:akbk}
\end{figure}
\begin{theorem}\label{thm:ABk_local}
For every $k \ge 1$, $A_k \in \mathcal{A}$ and $B_k \in \mathcal{B}$ are both local and global amoebas.
\end{theorem}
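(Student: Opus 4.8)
The plan is to follow the template of the proof of \Cref{thm:Tk_Local}: prove by induction on $k$ that $A_k$ is stem-symmetric with respect to a vertex of maximum degree and that $B_k$ is stem-symmetric with respect to its (unique, once $k\ge 2$) vertex of maximum degree, and then conclude with \Cref{thm:recursion}(iii), whose hypothesis $\delta(G)\le 1$ is automatic since each $A_k,B_k$ is a tree (or a single vertex). The small cases are dispatched by hand: $A_1$ is a point, $A_2\cong B_1\cong K_2$, and $B_2\cong P_3$; for the first three one simply notes $\fer^{b}=\{\mathrm{id}\}$, and for $P_3$ the leaf-transposing automorphism fixes the centre and already generates $\fer^{b}(P_3)\cong S_2$. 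In each small case a transposition taken from $\mathrm{Aut}(G)$, or from an obvious feasible edge-replacement, serves as the element of $\fer(G)\setminus\stab_G(b)$ needed for \Cref{lem:stem-sym_local}.

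For the inductive step, $k\ge 3$, I would unfold the two recursions one further level in order to match the four-graph hypothesis of \Cref{thm:recursion}. For $A_k=(H\cup J)+vw$ with $H\cong J\cong A_{k-1}$ and $v,w$ of maximum degree, write $H=(H_1\cup J_1)+uv$ and $J=(H_2\cup J_2)+wy$ using the $\mathcal A$-recursion, so that $H_1\cong J_1\cong H_2\cong J_2\cong A_{k-2}$ and $u,v$ (resp.\ $w,y$) are maximum-degree vertices of $H_1,J_1$, hence of $H$ (resp.\ of $H_2,J_2$, hence of $J$). Then $H_1\cong H_2$, $u$ is similar to $w$, and, by the induction hypothesis applied to $A_{k-1}$, $H$ is stem-symmetric with respect to $v$ and $J$ with respect to $w$; \Cref{thm:recursion}(i) then gives that $A_k$ is stem-symmetric with respect to the maximum-degree vertex $v$. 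For $B_k=(H\cup J)+vw$ with $H\cong B_{k-1}$, $J\cong A_{k-1}$ and $v,w$ of maximum degree — so that $v$ is the maximum-degree vertex of $B_k$ — I would instead write $H=(H_1\cup J_1)+uv$ with $H_1\cong A_{k-2}$, $J_1\cong B_{k-2}$ via the $\mathcal B$-recursion, and $J=(H_2\cup J_2)+wy$ with $H_2\cong J_2\cong A_{k-2}$ via the $\mathcal A$-recursion; again $H_1\cong H_2\cong A_{k-2}$ and $u$ is similar to $w$, while the induction hypotheses on $B_{k-1}$ and $A_{k-1}$ give stem-symmetry of $H$ about $v$ and of $J$ about $w$, so \Cref{thm:recursion}(i) applies once more.

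It remains to produce, for $A_k$ and $B_k$ with $k\ge 3$, an element of $\fer(G)\setminus\stab_G(b)$, and here the symmetry of the constructions does the work. For $A_k$, the automorphism $\tau$ that interchanges its two copies of $A_{k-1}$ sends $v$ to $w$, so $\tau\in A_{A_k}\subseteq\fer(A_k)$ moves the label $b$ of $v$; hence $A_k$ is a local and a global amoeba by \Cref{thm:recursion}(iii). For $B_k$, I would use the description $B_k=A_k+uz$, where $u$ is one of the two similar maximum-degree vertices $u,u'$ of $A_k$ and $z$ is a pendant, and observe that the edge-replacement $uz\to u'z$ is feasible (it yields $A_k+u'z\cong A_k+uz=B_k$) and is realized by the automorphism of $A_k$ interchanging $u$ and $u'$, extended by fixing $z$; this permutation lies in $\fer(B_k)$ and moves the label of the stem $v=u$, so \Cref{thm:recursion}(iii) again finishes the argument.

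The part demanding the most care is the bookkeeping in the inductive step: one must verify at every level that the designated roots and stems are genuinely vertices of maximum degree in the subgraph containing them, so that the induction hypothesis — phrased for maximum-degree vertices — actually transfers, and that the isomorphism $H_1\cong H_2$ can be chosen to respect the selected roots. This is slightly more delicate for $B_k$, since the two sides $B_{k-1}$ and $A_{k-1}$ are non-isomorphic and the maximum-degree vertex of $B_k$ must be tracked through the $B_{k-1}$ side (whereas for $A_k$ the reflective symmetry makes everything transparent). By contrast, exhibiting the auxiliary element of $\fer(G)\setminus\stab_G(b)$ is easy in both cases, essentially because of the built-in symmetry of the families: a swap automorphism for $A_k$, and ``sliding the pendant to the twin maximum-degree vertex'' for $B_k$.
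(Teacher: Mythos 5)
Your overall strategy is the paper's: verify small cases by hand, decompose each of $A_k$ and $B_k$ one level deeper into the four-piece configuration of \Cref{thm:recursion}, apply part (i) to get stem-symmetry at the maximum-degree vertex, and finish by exhibiting a permutation in $\fer(G)\setminus\stab_G(b)$ (the half-swapping automorphism for $A_k$, and sliding the pendant $z$ from one maximum-degree vertex of $A_k=B_k-z$ to the other for $B_k$ --- the latter is exactly the paper's replacement $db'\to dc'$). The bookkeeping of roots and the $H_1\cong H_2$ condition is handled as in the paper.

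There is, however, one concrete gap: you start the inductive step at $k=3$, whereas the paper starts it at $k=4$ and settles $k=3$ by direct computation ($A_3\cong T_3$ is covered by \Cref{thm:Tk_Local}, and $B_3$ by explicitly exhibited generators of $\fer^3(B_3)\cong S_4$). This is not a stylistic choice. At $k=3$ your four pieces degenerate: for $A_3$ all of $H_1,J_1,H_2,J_2$ are copies of $A_1$, a single vertex, and for $B_3$ the pieces $H'_2,J'_2$ are single vertices. The proof of \Cref{thm:recursion}(i) begins by fixing a pivot label $y_0\in B\setminus\{b\}$ with $B=\lambda(V(J_1))$, and later a pivot $x_0\in C\setminus\{c\}$ with $C=\lambda(V(H_2))$; when $J_1$ (respectively $H_2$) is a single vertex these sets are empty and the argument cannot be run. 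So although the hypotheses of \Cref{thm:recursion} are formally met and its conclusion happens to be true for $A_3$ and $B_3$, the theorem as proved in the paper does not cover this degenerate case, and citing it there leaves a hole. The fix is exactly what the paper does: treat $k=3$ as an additional base case (which for $B_3$ requires producing explicit permutations generating $\fer^{b'}(B_3)\cong S_4$ together with an element moving $b'$), and begin the recursion at $k=4$, where every piece has at least two vertices. With that adjustment your argument coincides with the paper's proof.
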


\begin{proof}
We proceed in a similar manner as in the proof of \Cref{thm:Tk_Local}. We prove that $A_k$ and $B_k$ are stem-symmetrical with respect to their vertices of maximum degree by means of \Cref{thm:recursion}(i). Afterwards, we use \Cref{thm:recursion}(ii) to prove that $A_k$ and $B_k$ are local amoebas. For $k=1, 2$, the result is given trivially. For $k=3$, note that $A_3\cong T_3$, and by \Cref{thm:Tk_Local}, this case is done. Considering $B_3$, we use the label set $\{1,2,3,4,5\}$ arranged as in \Cref{a3b3t4}. Note that the permutations $(1\,2)\in \fer^3 _{B_3}(32\to 32)$, $(2\,4)\in \fer^3 _{B_3}(12\to 14)$, and $(2\,5)\in \fer^3 _{B_3}(12\to 15)$ generate $\fer^3(B_3)\cong S_4$. As $(4\,5)\in \fer_{B_3}(\emptyset\to \emptyset) \setminus \stab_{\fer(T_3)}(3)$, it follows by \Cref{thm:recursion}(ii) that $B_3$ is a local amoeba.

Now let $k \ge 4$ and assume that, for every $j$ with $1 \le j < k$, $A_j$ and $B_j$ are both stem-symmetric with respect to their vertices of maximum degree as well as local amoebas.

Let $X$ be the set of labels for $A_k$, and $Y$ the set of labels for $B_k$. Let $b \in X$ and $b' \in Y$ be the labels of a vertex of maximum degree in $A_k$ and $B_k$, respectively. We consider first $A_k$. Let $A_k = (H \cup J)+vw$, where both $H$ and $J$ are isomorphic to $A_{k-1}$, and $v$ and $w$ are vertices of maximum degree in $H$ and $J$, respectively. Let $v$, which has maximum degree in $A_k$, have label $b$. Note that $A_k$ can be viewed as four graphs $H_1$, $J_1$, $H_2$, $J_2$, each isomorphic to $A_{k-2}$, and $u \in V(H_1)$, $x \in V(J_2)$ such that $H = (H_1 \cup J_1) + uv$ and $J = (H_2 \cup J_2) + wx$ (see \Cref{fig:akbk}). Let $c$ be the label of $w$. By the induction hypothesis, $\fer^{b}(H) \cong S_{n(H)-1}$, and $\fer^{c}(J) \cong S_{n(J)-1}$. Now \Cref{thm:recursion}(i) implies that $A_k$ is stem-symmetric with respect to $v$.  

The proof that $B_k$ is stem-symmetric with respect to a vertex of maximum degree works similarly. Let $B_k = (H' \cup J')+v'w'$, where $H' \cong B_{k-1}$ and $J' \cong A_{k-1}$, and $v'$ and $w'$ are vertices of maximum degree in $H'$ and $J'$, respectively. Observe first that, by definition, there is a leaf $z$ adjacent to $v'$ such that $H' - z \cong A_{k-1}$ and that $v'$ is the vertex of maximum degree in $B_k$. Now consider graphs $H'_1$, $J'_1$, $H'_2$, $J'_2$ such that $H'_1$, $H'_2$ and $J'_2$ are all isomorphic to $A_{k-2}$ and $J'_1 \cong B_{k-2}$. The vertices $u',v', w', x'$ are vertices of maximum degree in $H'_1, J'_1, H'_2$ and $J'_2$ respectively. Let $H' = (H'_1 \cup J'_1) +u'v'$ and $J' = (H'_2 \cup J'_2)+w'x'$. Let $c'$ and $b'$ be the labels of $w'$ and $v'$, respectively (see \Cref{fig:akbk}). By the induction hypothesis, $\fer^{b'}(H') \cong S_{n(H')-1}$, and $\fer^{c'}(J') \cong S_{n(J')-1}$. Hence, by \Cref{thm:recursion}(i), $B_k$ is stem-symmetric with respect to $v'$. 

Now we show that both $A_k$ and $B_k$ are local amoebas. Consider a permutation $\varphi \in \fer(A_k)$ that interchanges the labels among the sets $V(H)$ and $V(J)$ induced by an isomorphism that sends $b$ to $c$. Since $\varphi(b) = c \neq b$, it follows by \Cref{lem:stem-sym_local} that $A_k$ is a local amoeba. Finally, let $d$ be the label of the leaf $z$ and notice that $db' \to dc'$ is a feasible edge-replacement in $B_k$ that induces the permutation $\varphi' \in \fer(B_k)$ which exchanges the elements between $H'-z$ and $J'$ by means of an isomorphism that takes $v'$ to $w'$. Hence, $\varphi'(b') = c' \neq b'$ and, by \Cref{lem:stem-sym_local}, it follows that $B_k$ is a local amoeba.
\end{proof}

\section{Balancing number of global amoebas}\label{sec:balancing_glo_am}

In this section, we relate the balancing number of a global amoeba $G$ to the Turán number of a class of graphs $\cH_G$  which, broadly speaking, contains all subgraphs of $G$ on half its edges; see \Cref{thm:bal=ex}. This idea was already introduced in \cite{CHLZ} with the aim of finding balanced spanning subgraphs but with respect to closed families and, in particular, of local amoebas. Once in the context of Turán theory, we obtain general lower bounds on the balancing number of a global amoeba $G$, based on the size of cuts of $G$. We then apply these results to our case-studies: we determine the balancing number for the smallest cases and, for the larger ones, we provide lower and upper bounds; the latter is based on estimating the Turán number of suitable star forests, contained in the amoeba, on half the edges of the graph under consideration. 

For any graph $G$, let 
\begin{align}\label{dfn:H_G}
    \cH_G= \left\{ H:\, H\subset G,\, H \text{ has } \LFR{\frac{e(G)}{2}} \text{ edges and no isolated vertices}\right\}
\end{align}
be the class of subgraphs of $G$ that contain half the number of edges of $G$ (rounding down when $e(G)$ is odd). The next theorem establishes the equivalence between the balancing number of a global amoeba $G$ and the Turán number of the class $\cH_G$. We will make use of the following two well-known facts. By a classical argument of Erd\H{o}s \cite{Erd}, every graph contains a bipartite subgraph with at least half its edges. Moreover, $\ex(n,H) = o(n^2)$ for any bipartite graph $H$ (see \cite{FuSi}). Hence, we have that $\ex(n,\cH_G) = o(n^2)$, as $\cH_G$ contains at least a bipartite graph.

\begin{theorem}\label{thm:bal=ex}
Let $G$ be a global amoeba. Then, for $n$ sufficiently large, $\bal(n,G)=\ex(n,\cH_G)$. 
\end{theorem}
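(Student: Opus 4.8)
The plan is to prove the two inequalities $\bal(n,G) \le \ex(n,\cH_G)$ and $\bal(n,G) \ge \ex(n,\cH_G)$ separately, for $n$ sufficiently large.

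\medskip

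\noindent\textbf{Upper bound: $\bal(n,G)\le \ex(n,\cH_G)$.} Consider a $2$-edge-coloring of $K_n$ (with color classes, say, red and blue) in which each color class has more than $\ex(n,\cH_G)$ edges. I want to produce a copy of $G$ with $\lceil e(G)/2\rceil$ edges in one color and $\lfloor e(G)/2\rfloor$ in the other. Since the red graph has more than $\ex(n,\cH_G)$ edges, it contains some $H_r\in\cH_G$ as a subgraph, i.e., a copy of a subgraph of $G$ with exactly $\lfloor e(G)/2\rfloor$ edges and no isolated vertices; likewise the blue graph contains some $H_b\in\cH_G$. The first step is to use $H_r$ to build \emph{some} copy of $G$ in $K_n$ in which a prescribed set of $\lfloor e(G)/2\rfloor$ edges (a copy of $H_r$) is red: extend $H_r$ to a copy of $G$ on a fresh set of vertices (using that $n$ is large), obtaining a copy $G_0$ of $G$ with at least $\lfloor e(G)/2\rfloor$ red edges. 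If $G_0$ already has at least $\lceil e(G)/2 \rceil$ blue edges as well we are done (its color split is then exactly the desired one). Otherwise $G_0$ has too few blue edges, and here is where the amoeba property enters: because $G$ is a global amoeba, $G\cup K_1$ is a local amoeba on $n(G)+1$ vertices, so inside the $(n(G)+1)$-vertex complete graph spanned by $V(G_0)$ together with one extra vertex, any copy of $G\cup K_1$ can be moved to any other by a chain of feasible edge-replacements. Each feasible edge-replacement removes one edge and adds one edge while preserving isomorphism type, hence changes the number of red (and blue) edges of the current copy by at most one. Starting from $G_0$ and moving towards the copy $G_1$ whose edge-label set equals that of a copy of $H_b$ extended to $G$ (so $G_1$ has at least $\lfloor e(G)/2\rfloor$ blue edges), the number of blue edges changes by steps of size at most $1$ along the chain; by discrete intermediate-value, at some intermediate copy the number of blue edges is exactly $\lfloor e(G)/2\rfloor$ (or $\lceil e(G)/2\rceil$), i.e., the split is balanced. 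One must check that the needed extra vertex and the needed room for edge-replacements is available, which holds since $n$ is large compared to $n(G)$; the edge-replacement chain lives on a bounded vertex set.

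\medskip

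\noindent\textbf{Lower bound: $\bal(n,G)\ge \ex(n,\cH_G)$.} I exhibit, for large $n$, a $2$-edge-coloring of $K_n$ with exactly $\ex(n,\cH_G)$ edges in each color (or one short of it) that contains no balanced copy of $G$. Take an extremal graph $F$ on $n$ vertices with $\ex(n,\cH_G)$ edges and no member of $\cH_G$ as a subgraph; color the edges of $F$ red. Since $\ex(n,\cH_G)=o(n^2)$, the complement $\overline F$ has far more than $\ex(n,\cH_G)$ edges, so I can pick a subgraph $F'\subseteq \overline F$ with exactly $\ex(n,\cH_G)$ edges that also contains no member of $\cH_G$ — for instance by taking $F'$ to be an extremal $\cH_G$-free graph, using that a single extremal configuration on $o(n)$-degree can be re-embedded into $\overline F$, or more simply by noting any subgraph of an $\cH_G$-free graph is $\cH_G$-free and $\overline F$ minus a few edges can be made $\cH_G$-free; the cleanest route is to just recall that removing edges only helps and to choose $F'$ to be a max-size $\cH_G$-free subgraph of $\overline F$, which has size at least $\ex(n,\cH_G)$ after possibly deleting $o(n^2)$ edges — one should phrase this carefully so that both color classes get exactly the same count. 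Color $F'$ blue and all remaining edges arbitrarily, say blue (adjusting so that exactly $\ex(n,\cH_G)$ edges receive each color; the precise bookkeeping is routine). Now suppose some copy $G^*$ of $G$ in $K_n$ had a balanced color split, $\lfloor e(G)/2\rfloor$ edges of one color. Those $\lfloor e(G)/2\rfloor$ monochromatic edges of, say, red form a subgraph of $G^*$ with $\lfloor e(G)/2\rfloor$ edges; deleting isolated vertices gives a member of $\cH_G$ that lives inside the red graph $F$ — contradiction, and symmetrically for blue. Hence no balanced copy of $G$ exists, proving $\bal(n,G)>\ex(n,\cH_G)-1$, i.e., $\bal(n,G)\ge\ex(n,\cH_G)$.

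\medskip

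\noindent\textbf{Main obstacle.} The delicate point is the upper bound: one must argue that the discrete intermediate-value step along the chain of feasible edge-replacements actually produces a copy of $G$ (not just of $G\cup K_1$) with the exact target split, and that the ambient $K_n$ is large enough to simultaneously host the initial embedding extending $H_r$, the target embedding extending $H_b$, and the bounded-support edge-replacement chain connecting them. Keeping track of whether it is $\lfloor e(G)/2\rfloor$ or $\lceil e(G)/2\rceil$ in each color (when $e(G)$ is odd) and making the two color-class sizes in the lower-bound construction come out exactly equal are the routine-but-fiddly parts; the conceptual core is the step-size-one monotone-change argument enabled by the global amoeba hypothesis.
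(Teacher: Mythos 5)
Your upper-bound argument is essentially the paper's: find a member of $\cH_G$ in each color class, extend each to a copy of $G$ (so one copy has at least $\lfloor e(G)/2\rfloor$ red edges and the other at least $\lfloor e(G)/2\rfloor$ blue edges), connect the two copies by a chain of feasible edge-replacements, and apply the discrete intermediate-value argument since each replacement changes the blue count by at most one. One small correction there: you do not need the detour through $G\cup K_1$ being a local amoeba on $n(G)+1$ vertices, and in fact confining the chain to ``$V(G_0)$ together with one extra vertex'' would not suffice, because the two copies $G_0$ and $G_1$ may sit on disjoint vertex sets inside $K_n$. The global amoeba hypothesis is used directly: for $n$ large, \emph{any} copy of $G$ in $K_n$ can be moved to \emph{any other} copy in $K_n$ by feasible edge-replacements, and each intermediate graph in the chain is itself a copy of $G$, so the intermediate-value step lands on a genuine balanced copy of $G$.

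The genuine gap is in your lower bound. You set out to build a $2$-edge-coloring of $K_n$ in which \emph{both} color classes have exactly $\ex(n,\cH_G)$ edges and both are $\cH_G$-free. This is impossible: the two classes partition all $\binom{n}{2}$ edges of $K_n$, while $\ex(n,\cH_G)=o(n^2)$, so they cannot both have size $\ex(n,\cH_G)$; your parenthetical ``color all remaining edges blue, adjusting so that exactly $\ex(n,\cH_G)$ edges receive each color'' cannot be carried out, and if the remaining edges really are all colored blue then the blue class is enormous and certainly contains members of $\cH_G$. The point you are missing is that only \emph{one} class needs to be $\cH_G$-free: take an extremal $\cH_G$-free graph $F$ with $\ex(n,\cH_G)$ edges, color $E(F)$ blue and everything else red. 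Then $|R|=\binom{n}{2}-|B|\ge |B|=\ex(n,\cH_G)$ for large $n$, so both classes meet the size threshold, and any balanced copy of $G$ would have at least $\lfloor e(G)/2\rfloor$ blue edges, which (after discarding isolated vertices) would be a member of $\cH_G$ inside $F$ --- a contradiction. Your final ``those monochromatic edges form a member of $\cH_G$'' step is exactly the right mechanism; it is the construction feeding into it that must be replaced.
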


\begin{proof}
Throughout the proof, let $m$ be the number of edges of $G$. We show that $\bal(n,G)$ is an upper bound for  $\ex(n,\cH_G)$ and viceversa.  Take $n$ large enough such that all arguments in the proof go through.

To show the inequality $\ex(n,\cH_G) \le \bal(n,G)$, consider a 2-edge-coloring of $K_n$ with partition $E(K_n) = R\cup B$, where $|B| = \ex(n,\cH_G)$ and such that the graph induced by $B$ is free from any member of $\cH_G$. As $\ex(n,\cH_G) = o(n^2)$, it follows that $|R| = \binom{n}{2} - |B| \ge |B|$, for $n$ large enough. Since the graph induced by $B$ is $\cH_G$-free, there is no balanced $G$ in $K_n$, implying that $\ex(n,\cH_G) = |B| \le \bal(n,G)$.

For the other side of the inequality, let $E(K_n) = R \cup B$ be a 2-edge-coloring of $K_n$ satisfying $\min\{|R|, |B|\} > \ex(n,\cH_G)$ (this is possible since $\ex(n,\cH_G) = o(n^2)$). We proceed to show that the coloring contains a balanced copy of $G$. 

Since $|R| > \ex(n,{\cH}_G)$, there is a copy $H_R$ of a member from ${\cH}_G$ contained in the graph induced by $R$. Let $G_R \subset K_n$ be a copy of $G$ such that $H_R\subset G_R$ and let $\ell_R$ be the number of red edges in $G_R$. Similarly, we may 
consider a blue copy $H_B$ of a member from ${\cH}_G$ contained in the graph induced by $B$. Let $G_B \subset K_n$ be a copy of $G$ such that $H_B\subset G_B$ and let $\ell_B$ be the number of blue edges in $G_B$. By the definition of $\cH_G$, we have that $\min\{\ell_R,\ell_B\}\ge \lfr{\frac{m}{2}}$. 

Since our aim is to find a balanced copy of $G$ in $R\cup B$, we may assume that neither $G_R$ nor $G_B$ are balanced, as otherwise we are done; that is, $\{\ell_R,\ell_B\}\cap \{\lfr{\frac{m}{2}},\lcr{\frac{m}{2}}\}=\emptyset$. Therefore, in what follows, we assume that $\min\{\ell_R,\ell_B\}> \lcr{\frac{m}{2}}$. 

Since $G$ is a global amoeba, there exists a sequence of subgraphs $G_0=G_R,G_1,\ldots, G_k=G_B$ of $K_n$ such that, for each $1\le i\le k$, $G_i$ may be obtained from $G_{i-1}$ by a feasible edge replacement. Let $\ell_{B,i}$ be the number of blue edges in $G_i$. Edge replacements may change the number of blue edges in each $G_i$ by at most one, that is, $|\ell_{B,i}-\ell_{B,i-1}|\le 1$. On the other hand, $\ell_{B,0}=m-\ell_R< m-\lcr{\frac{m}{2}} = \lfr{\frac{m}{2}}$ and $\ell_{B,k}=\ell_B> \lcr{\frac{m}{2}}$. Thus, the theorem of the intermediate value implies that there is some $0\le i\le k$ for which $\ell_{B,i}\in \{\lfr{\frac{m}{2}},\lcr{\frac{m}{2}}\}$ and such $G_i$ is a balanced copy of $G$. Hence, $\ex(n,\cH_G)$ is an upper bound for $\bal(n,G)$, as desired. 
Together, these two arguments establish $\bal(n,G)=\ex(n,\cH_G)$.
\end{proof}

Moreover, we have the following lower bound on $\bal(n,G)$ for general, global amoebas. 

\begin{theorem}\label{thm:bal_lowbound}
Let $G=(V,E)$ be a global amoeba with $k$ vertices, $m$ edges and degree sequence $d_1\ge \cdots \ge d_k$. For any $\ell\ge 1$ satisfying 
$\sum_{i=1}^\ell d_i < \LFR{\frac{m}{2}}$, we have $\bal(n,G)\ge \ell (n-\ell)$. 
In particular, 
\begin{align*}
\bal(n,G)\ge \LFR{\frac{m-1}{2k+1}} \left(n-\LFR{\frac{m-1}{2k+1}}\right).
\end{align*}
\end{theorem}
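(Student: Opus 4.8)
The plan is to exploit \Cref{thm:bal=ex}, which tells us that $\bal(n,G) = \ex(n,\cH_G)$ for $n$ large, and then to construct an explicit $\cH_G$-free graph on $n$ vertices with many edges. The natural candidate is a complete bipartite graph $K_{\ell, n-\ell}$ for a suitable $\ell$: its edge count is exactly $\ell(n-\ell)$, which is the bound we want, so everything reduces to choosing $\ell$ so that $K_{\ell,n-\ell}$ contains no member of $\cH_G$.

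First I would recall that every $H \in \cH_G$ is a subgraph of $G$ on $\LFR{m/2}$ edges with no isolated vertices. Suppose, for contradiction, that some $H \in \cH_G$ embeds in $K_{\ell, n-\ell}$ with bipartition classes $S$ (size $\ell$) and $T$ (size $n-\ell$). Every edge of $H$ has one endpoint in $S$; since $H$ has no isolated vertices, the vertices of $H$ lying in $S$ cover all $\LFR{m/2}$ edges of $H$. Now $H \subset G$, so the vertices of $H$ landing in $S$ correspond to at most $\ell$ vertices of $G$, and the number of edges of $G$ incident to any $\ell$ vertices of $G$ is at most $d_1 + \cdots + d_\ell = \sum_{i=1}^\ell d_i$ (summing the $\ell$ largest degrees is the worst case). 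Hence $\LFR{m/2} = e(H) \le \sum_{i=1}^\ell d_i$. So if $\ell$ is chosen with $\sum_{i=1}^\ell d_i < \LFR{m/2}$, no such embedding exists, $K_{\ell,n-\ell}$ is $\cH_G$-free, and therefore $\ex(n,\cH_G) \ge e(K_{\ell,n-\ell}) = \ell(n-\ell)$. Combined with \Cref{thm:bal=ex}, this gives $\bal(n,G) \ge \ell(n-\ell)$, which is the first assertion.

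For the ``in particular'' statement I would simply verify that $\ell = \LFR{\frac{m-1}{2k+1}}$ satisfies the hypothesis $\sum_{i=1}^\ell d_i < \LFR{m/2}$. Since the degree sequence is nonincreasing and $\sum_{i=1}^k d_i = 2m$, we have $d_1 + \cdots + d_\ell \le \frac{\ell}{k}\cdot 2m = \frac{2m\ell}{k}$; a cleaner bound avoiding fractions uses $d_i \le \Delta(G) \le k-1$... but actually the sharper route is to note $\sum_{i=1}^{\ell} d_i \le \frac{\ell}{k} \sum_{i=1}^k d_i$ only when the $d_i$ are sorted — which they are — so $\sum_{i=1}^\ell d_i \le \frac{2m\ell}{k} < \frac{2m+1}{2k+1}\cdot\ell \cdot\frac{2k+1}{2k}$, and one checks that $\ell \le \frac{m-1}{2k+1}$ forces $\sum_{i=1}^\ell d_i \le \frac{2m\ell}{k} \le \frac{2m}{k}\cdot\frac{m-1}{2k+1} < \LFR{m/2}$ after elementary manipulation. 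I would present this as a short chain of inequalities rather than belabor it.

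The only genuine subtlety — the step I'd be most careful about — is the claim that $\LFR{m/2} = e(H) \le \sum_{i=1}^\ell d_i$ whenever $H$ meets the class $S$ in at most $\ell$ vertices. One must be precise that ``degrees'' here are the degrees in $G$, not in $H$, and that covering all edges of $H$ by $\le\ell$ vertices of $G$ bounds $e(H)$ by the number of $G$-edges incident to some set of $\le\ell$ vertices, which is at most the sum of the $\ell$ largest degrees (and strictly smaller if those vertices share edges, but the upper bound is all we need). Everything else — the edge count of $K_{\ell,n-\ell}$, the floor estimate for the specific $\ell$, and the invocation of \Cref{thm:bal=ex} for $n$ large enough — is routine.
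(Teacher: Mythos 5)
Your proof of the first assertion is correct and follows essentially the same route as the paper: the paper also exhibits $K_{\ell,n-\ell}$ as the $\cH_G$-free witness (packaged through the quantity $\mathrm{MaxCut}(G,\ell)$ and Lemma~\ref{lemma:cut}), and its key inequality $e(H)\le e_G(S,V\setminus S)\le \sum_{i=1}^{\ell}d_i$ is the same covering/degree-sum argument you give.

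The ``in particular'' part, however, contains a genuine error. You claim that, because the degree sequence is sorted, $\sum_{i=1}^{\ell}d_i\le\frac{\ell}{k}\sum_{i=1}^{k}d_i=\frac{2m\ell}{k}$. This inequality is backwards: for a nonincreasing sequence the average of the $\ell$ largest terms is at least the overall average, so in fact $\sum_{i=1}^{\ell}d_i\ge\frac{2m\ell}{k}$, with equality only for regular graphs. (Concretely, $T_6$ has $k=16$ vertices, $m=15$ edges and $d_1=5$, whereas $\frac{2m}{k}<2$.) Since your entire verification that $\ell=\LFR{\frac{m-1}{2k+1}}$ satisfies the hypothesis $\sum_{i=1}^{\ell}d_i<\LFR{\frac{m}{2}}$ rests on this upper bound, that step collapses; the subsequent chain of inequalities is also internally inconsistent. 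The paper instead invokes a property specific to global amoebas, namely $d_i\le k+1-i$ (Proposition~3 of \cite{CHM20}), which gives $\sum_{i=1}^{\ell}d_i\le(k+1)\ell-\frac{\ell(\ell+1)}{2}$, and then solves the resulting quadratic inequality via Lemma~\ref{lemma:aux_q}. Note that even the crude bound $d_i\le k-1$, which you mention and then discard as less sharp, already suffices: it yields $\sum_{i=1}^{\ell_0}d_i\le\ell_0(k-1)\le\frac{(m-1)(k-1)}{2k+1}<\frac{m-1}{2}\le\LFR{\frac{m}{2}}$. So the statement is easily repaired, but the averaging route you actually chose does not work.
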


Note that the bound expressed in the theorem is meaningful only when $m\ge 2(k+1)$. The proof of this theorem, which is a consequence of \Cref{thm:bal=ex} and the next two lemmas, requires the introduction of the following notation. For a graph $G=(V,E)$ with partition $V=S\cup T$, let 
\begin{align}\label{dfn:C}
    e_G(S,T)=|\{uv\in E(G):\, u\in S, v\in T\}|;
\end{align}
we say that $e_G(S,T)$ is the size of the cut $(S,T)$. For $1\le \ell\le |V|$, we define 
\begin{align}
    \mathrm{MaxCut}(G,\ell) &=\max \{e_G(S,V\setminus S):\, S\subset V, |S|\le \ell\}; \\
    \ell_G&=\max\left\{\ell \ge 0: \mathrm{MaxCut}(G,\ell)< \LFR{\frac{e(G)}{2}} \right\}. \label{dfn:k_G}
\end{align}

Observe that $0 \le \ell_G < |V|/2$. To see this, note that, as a function of $\ell$, $\mathrm{MaxCut}(G,\ell)$ is non-decreasing
and attains its maximum at $\ell=\lfr{\frac{|V|}{2}}$; while, as mentioned before, every graph $G=(V,E)$ has a cut of size at least $e(G)/2$ \cite{Erd}; that is, $$\mathrm{MaxCut}\left(G,\LFR{\frac{|V|}{2}}\right)\ge \LCR{\frac{e(G)}{2}}.$$ 

The following theorem gives a lower bound on $\ex(n,\cH_G)$ in terms of $\ell_G$.

\begin{lemma}\label{lemma:cut}
Let $G=(V,E)$ be a graph. Then, for any $\ell \le \ell_G$,  
$$\ex(n,\cH_G)\ge \ell_G(n-\ell_G) \ge \ell(n-\ell).$$
\end{lemma}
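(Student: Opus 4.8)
The plan is to exhibit, for the specified range of $\ell$, an explicit $n$-vertex graph that is $\cH_G$-free and has exactly $\ell(n-\ell)$ edges; the complete bipartite graph $K_{\ell, n-\ell}$ is the natural candidate. First I would record the monotonicity $\ell(n-\ell) \le \ell_G(n-\ell_G)$ for $0 \le \ell \le \ell_G < n/2$ (valid once $n$ is large, which is the regime in which $\ex(n,\cH_G)$ is being discussed), so that it suffices to prove $\ex(n,\cH_G) \ge \ell_G(n-\ell_G)$. Then the core of the argument is the claim that $K_{\ell_G, n-\ell_G}$ contains no member of $\cH_G$ as a subgraph.

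To see this, suppose for contradiction that some $H \in \cH_G$ embeds in $K_{\ell_G, n-\ell_G}$. Since $H \subset G$, fix the corresponding copy of $H$ inside $G$, and let $S \subseteq V(G)$ be the subset of vertices of $G$ that land on the small side (the part of size $\ell_G$) of the bipartition under this embedding. Because $H$ embeds in a bipartite graph with the stated parts, every edge of $H$ must go between $S$ and $V(G)\setminus S$; hence $e(H) \le e_G(S, V(G)\setminus S)$. On the other hand $|S| \le \ell_G$, so $e_G(S,V(G)\setminus S) \le \mathrm{MaxCut}(G,\ell_G) < \lfr{\frac{e(G)}{2}}$ by the definition~\eqref{dfn:k_G} of $\ell_G$. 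But $e(H) = \lfr{\frac{e(G)}{2}}$ by the definition of $\cH_G$, a contradiction. Therefore $K_{\ell_G, n-\ell_G}$ is $\cH_G$-free, and since it has $\ell_G(n-\ell_G)$ edges we conclude $\ex(n,\cH_G) \ge \ell_G(n-\ell_G)$.

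The one point requiring a small amount of care — and the closest thing to an obstacle — is handling isolated vertices: members of $\cH_G$ are defined to have no isolated vertices, and when an abstract copy of $H$ is realized inside $G$ it may be that the spanning subgraph of $G$ on those $\lfr{e(G)/2}$ edges has a different vertex set than an "efficient" copy of $H$; but this is harmless, since the argument above only uses that the edges of the embedded copy respect the bipartition of $K_{\ell_G,n-\ell_G}$, and the set $S$ of $G$-vertices sent to the small side automatically satisfies $|S|\le\ell_G$ regardless of how many vertices are isolated in $H$. Finally, chaining with the monotonicity inequality gives $\ex(n,\cH_G) \ge \ell_G(n-\ell_G) \ge \ell(n-\ell)$ for every $\ell \le \ell_G$, as claimed.
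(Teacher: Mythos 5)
Your proof is correct and takes essentially the same route as the paper's: both establish the bound by showing that $K_{\ell_G,n-\ell_G}$ contains no member of $\cH_G$, since any such member would yield a cut $(S,V\setminus S)$ of $G$ with $|S|\le\ell_G$ and at least $\lfr{e(G)/2}$ crossing edges, contradicting the definition of $\ell_G$. Your explicit identification of $S$ as a subset of $V(G)$ via the embedding, and the remark about isolated vertices, only make explicit what the paper's proof leaves implicit.
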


\begin{proof}
For any $n \ge |V|$, using $0 \le \ell_G < |V|/2$, we infer that the maximum of $\{\ell(n-\ell): 0\le \ell \le \ell_G\}$ is attained at $\ell=\ell_G$. Let us assume $\ell_G \ge 1$; otherwise, the statement is trivially satisfied. We claim that the complete bipartite graph $K_{\ell_G,n-\ell_G}$ does not contain any $H\in \cH_G$ as a subgraph and so $\ex(n,\cH_G)\ge \ell_G(n-\ell_G)$. 

Suppose to the contrary that $K_{\ell_G,n-\ell_G}$ contains a copy of some $H\in  \cH_G$, in which case $H$ is bipartite with partition $S\cup R$ and size $\lfr{\frac{m}{2}}$, where $m = e(G)$. Without loss of generality, we may assume that $|S|\le \ell_G$.

Considering $G$, by definition of $\ell_G$ and \eqref{dfn:k_G}, and assuming $\ell_G \ge 1$,  we have 
\begin{align*}
    e_G(S,V\setminus S)\le \mathrm{MaxCut}(G,\ell_G)< \LFR{\frac{m}{2}}.
\end{align*}
On the other side, since $H$ is bipartite and subgraph of $G$, 
\begin{align*}
    e_G(S, V\setminus S) \ge e_H(S,R) = e(H)= \LFR{\frac{m}{2}};
\end{align*}
which is a contradiction. 
\end{proof}

In preparation for the proof of \Cref{thm:bal_lowbound}, let us consider solutions to the following quadratic inequality.

\begin{lemma}\label{lemma:aux_q}
    Consider $k,m\ge 1$ such that $(2k+1)^2\ge 4(m-1)$ and set $\ell_0=\lfr{\frac{m-1}{2k+1}}$. Any $x\le \ell_0$ satisfies  
    \begin{align*}
    (k+1)x -\frac{x(x+1)}{2}\le \frac{m-1}{2}.
\end{align*}
\end{lemma}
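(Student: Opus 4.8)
\textbf{Proof plan for Lemma~\ref{lemma:aux_q}.} The plan is to treat the left-hand side as a quadratic function of the real variable $x$ and show it is increasing on the relevant range, so that its maximum over $x\le \ell_0$ is attained at $x=\ell_0$; then a direct estimate at $x=\ell_0$ finishes the job. Concretely, set $f(x) = (k+1)x - \tfrac{x(x+1)}{2} = -\tfrac12 x^2 + \big(k+\tfrac12\big)x$. This is a downward-opening parabola with vertex at $x^* = k+\tfrac12$, so $f$ is increasing on $(-\infty, k+\tfrac12]$. Thus it suffices to check that $\ell_0 = \lfr{\frac{m-1}{2k+1}} \le k+\tfrac12$, which is exactly where the hypothesis $(2k+1)^2 \ge 4(m-1)$ enters: it gives $\frac{m-1}{2k+1} \le \frac{2k+1}{4} < k+\tfrac12$, hence $\ell_0 \le k+\tfrac12$. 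So for any $x \le \ell_0$ we have $f(x) \le f(\ell_0)$.

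It then remains to bound $f(\ell_0)$ by $\tfrac{m-1}{2}$. Write $\ell_0 = \lfr{\frac{m-1}{2k+1}}$, so $\ell_0 \le \frac{m-1}{2k+1}$, i.e. $(2k+1)\ell_0 \le m-1$. Now estimate
\[
f(\ell_0) = (k+1)\ell_0 - \frac{\ell_0(\ell_0+1)}{2} = \frac{2(k+1)\ell_0 - \ell_0^2 - \ell_0}{2} = \frac{(2k+1)\ell_0 - \ell_0^2}{2} \le \frac{(2k+1)\ell_0}{2} \le \frac{m-1}{2},
\]
where the first inequality uses $\ell_0^2 \ge 0$ (note $\ell_0 \ge 0$ since $m \ge 1$ and $k\ge 1$) and the second uses $(2k+1)\ell_0 \le m-1$. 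Combining with the previous paragraph, $f(x) \le f(\ell_0) \le \tfrac{m-1}{2}$ for all $x \le \ell_0$, which is the claim.

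I do not expect any serious obstacle here; the lemma is a routine calculus/arithmetic fact. The only point requiring a little care is making sure the monotonicity argument is applied on the correct interval — that is, confirming via the hypothesis that $\ell_0$ really does lie to the left of the vertex $k+\tfrac12$, so that "maximum at $x=\ell_0$" is legitimate rather than the maximum being interior. Everything else is a one-line substitution and the elementary inequality $\ell_0^2 \ge 0$.
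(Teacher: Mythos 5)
Your proof is correct, but it takes a genuinely different route from the paper's. The paper treats the inequality as the quadratic $x^2-(2k+1)x+(m-1)\ge 0$, which holds outside the interval between the two roots $x_\pm$, and then lower-bounds the smaller root via the concavity estimate $\sqrt{1-y}\le 1-y/2$, obtaining $x_-\ge \frac{m-1}{2k+1}\ge \ell_0$; the hypothesis $(2k+1)^2\ge 4(m-1)$ is used there to guarantee the roots are real. You instead avoid computing roots altogether: you observe that $f(x)=-\tfrac12x^2+(k+\tfrac12)x$ is increasing up to its vertex $k+\tfrac12$, use the hypothesis only to place $\ell_0$ to the left of the vertex, and then verify the bound at $x=\ell_0$ directly via $f(\ell_0)=\tfrac12\big((2k+1)\ell_0-\ell_0^2\big)\le \tfrac12(2k+1)\ell_0\le\tfrac{m-1}{2}$. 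Your argument is more elementary (no square-root estimate) and, as a bonus, your final computation shows the hypothesis is barely needed: for $0\le x\le \ell_0$ one already has $f(x)\le \tfrac12(2k+1)x\le\tfrac12(2k+1)\ell_0\le \tfrac{m-1}{2}$, and for $x<0$ one has $f(x)<0$. The paper's root-based formulation is slightly more delicate (it even contains a small typo, $2k-1$ in place of $2k+1$, in the final display), so your version is arguably cleaner; both establish the same statement.
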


\begin{proof}
Rearranging the terms, we can see that the inequality holds on $(-\infty,x_-)\cup (x_+,\infty)$; for $x_-$ and $x_+$ solutions of the quadratic equation 
\begin{align*}
    2(k+1)x-x(x+1)-(m-1)= -x^2+(2k+1)x-(m-1)=0.
\end{align*}

The solutions to $x^2-bx+c=0$, provided $b^2>4c$, are given by $$x_\pm=\frac{b \pm \sqrt{b^2-4c} }{2}=\frac{b}{2}\left(1\pm \sqrt{1- \frac{4c}{b^2}}\right).$$ 
The function $f(y)=\sqrt{1-y}-1+y/2$ is concave and its maximum is attained at $f(0)=0$; we infer that, $\sqrt{1-y}\le 1-y/2$ for $y\in [0,1]$. This implies that $$x_-=\frac{b}{2}\left(1-\sqrt{1-\frac{4c}{b^2}}\right) \ge \frac{b}{2}\left(1-\left(1-\frac{2c}{b^2}\right) \right)= \frac{c}{b}.$$

Using $b=2k+1$, $c=m-1$ and the definition of $\ell_0$, we obtain that 
$x_-\ge \frac{m-1}{2k-1} \ge \ell_0$.
\end{proof}

\vspace{2ex}
\begin{proof}[Proof of \Cref{thm:bal_lowbound}]
Using that the degree sequence $d_1,d_2,\ldots, d_k$ is listed in non-increasing order, we infer that for any $S\subset V$, 
\begin{align*}
    e_G(S,V\setminus S)\le  
    \sum_{v\in S} \deg(v)\le \sum_{i=1}^{|S|} d_i. 
\end{align*}
It follows that, if $\ell\ge 1$ satisfies $\sum_{i=1}^\ell d_i < \LFR{\frac{m}{2}}$, then $\ell \le \ell_G$ since
\begin{align*}
    \max \{e_G(S,V\setminus S):\, S\subset V, |S|\le \ell\}
    \le \sum_{i=1}^\ell d_i
    < \LFR{\frac{m}{2}}. 
\end{align*}
This, together with \Cref{thm:bal=ex} and \Cref{lemma:cut}, implies that $\bal(n,G)=\ex(n,\cH_G)\ge \ell (n-\ell)$. 

Finally, we optimize the value of $\ell$ such that $\sum_{i=1}^\ell d_i < \LFR{\frac{m}{2}}$. Since $G$ is a global amoeba, by \cite[Proposition 3]{CHM20}, we have $d_i\le k+1-i$ for $1\le i\le k$, then
\begin{align*}
 \sum_{i=1}^\ell d_i\le \sum_{i=1}^\ell (k+1-i)=(k+1)\ell -\frac{\ell(\ell+1)}{2}.   
\end{align*}
Since we are considering integer variables $k,m$ and $\ell$, the inequality 
    \begin{align*}
    (k+1)x -\frac{x(x+1)}{2}<\LFR{\frac{m}{2}}
\end{align*}
is equivalent to that in the statement of Lemma~\ref{lemma:aux_q}, whose conditions are satisfied since $(2k+1)^2>8{k\choose 2}>4m>4(m-1)$. Thus, $\ell_0=\lfr{\frac{m-1}{2k+1}}$ satisfies  $    \sum_{i=1}^\ell d_i <\LFR{\frac{m}{2}}$ and so $\bal(n,G)=\ex(n,\cH_G)\ge \ell_0 (n-\ell_0)$, as desired.
\end{proof}


\subsection{Applications to our case studies}

For a graph $G$ in any of the classes $\mathcal T,\mathcal A$ or $\mathcal B$ of global amoebas, we determine the balancing number $\bal(n,G)$ for the smallest cases (see \Cref{table:cte_bal}), and, for the larger ones, we provide upper and lower bounds. 

We begin by presenting \Cref{thm:TABk_bal}, which provides a summary of the asymptotic results for each class. We then obtain, in each of the subsequent sections, the proofs for the lower and upper bounds in \Cref{thm:TABk_bal}, and finally, we calculate the balancing numbers corresponding to \Cref{table:cte_bal} and the proof of \Cref{thm:TABk_bal}.

\begin{theorem}\label{thm:TABk_bal}
For $n$ sufficiently large, $$n-1\le \bal(n,T_6)\le 2n-2.$$ 
For $k\ge 7$, 
$$F_{k-4}(n-F_{k-4})\le \bal(n,T_k)\le (F_{k-2}-2)n- {F_{k-2}-1\choose 2} +\delta_{k=7};$$
where $\delta_{k=7}=1$ if $k=7$ and is zero otherwise. 
And for $k\geq 5$, $$2^{k-5}(n-2^{k-5})\leq \bal(n,A_k)\leq \bal(n,B_k)  \leq(2^{k-3}-1)n -{2^{k-3}\choose 2}.$$
\end{theorem}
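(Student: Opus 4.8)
The plan is to apply Theorems~\ref{thm:bal=ex}, \ref{thm:bal_lowbound} and Lemma~\ref{lemma:cut} to each of the three families, so the task reduces to (a) computing the relevant combinatorial parameters of $T_k$, $A_k$, $B_k$ (number of edges, degree sequence, the quantity $\ell_G$), and (b) producing, for the upper bounds, an explicit $\cH_G$-free graph on $n$ vertices with many edges. First I would record the basic recursions: $e(T_k)=F_{k+1}-1$ (with $F$ the Fibonacci numbers, suitably indexed so $e(T_1)=e(T_2)=1$), $e(A_k)=2^{k-1}-1$, and $e(B_k)=2^{k-1}$, each following immediately from the defining recursion $G_k=(H\cup J)+uv$ by adding the sizes plus one. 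I would likewise track the maximum degree: $\Delta(T_k)=k-1$ for $k\ge 2$, $\Delta(A_k)=k-1$, $\Delta(B_k)=k-1$, again by induction from the construction (the connecting vertex gains one in degree at each step).

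\textbf{Lower bounds.} These come directly from Lemma~\ref{lemma:cut} (equivalently the first display of Theorem~\ref{thm:bal_lowbound}): I need to exhibit a single vertex set $S$ with $|S|=\ell$ and $e_G(S,V\setminus S)<\LFR{e(G)/2}$, which forces $K_{\ell,n-\ell}$ to be $\cH_G$-free and hence $\bal(n,G)\ge \ell(n-\ell)$. For $A_k$ I would take $S$ to be the vertex set of one of the two copies of $A_{k-1}$ in a convenient earlier stage of the recursion; the cut across that edge has size $1$, but to get the claimed $\ell=2^{k-5}$ I instead iterate: peel off copies so that $S$ is a union of $2^{k-5}$ tiny subtrees each contributing a bounded number of cut edges, and check $e_G(S,V\setminus S)$ stays below $\LFR{(2^{k-1}-1)/2}$. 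Concretely, choosing $S$ to be the vertex set of a subforest isomorphic to $2^{k-5}$ disjoint copies of $A_5$ hanging off the ``spine'' works, since each such copy meets the rest of $A_k$ in a single edge, giving cut size exactly $2^{k-5}$, comfortably less than $2^{k-2}-1$. The same $S$ embeds in $B_k\supset A_k$ (up to the pendant edge $z$), handling $B_k$'s lower bound, and the middle inequality $\bal(n,A_k)\le\bal(n,B_k)$ follows from $\cH_{A_k}\subseteq$ (copies contained in) $B_k$ — more carefully, from $A_k\subset B_k$ one gets $\ex(n,\cH_{B_k})\ge \ex(n,\cH_{A_k})$ after comparing edge counts, so I would spell that containment out. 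For $T_k$ I would similarly take $S$ to be the vertex set of a copy of $T_{k-4}$ sitting in $T_k$ via the Fibonacci recursion unrolled twice, so that $S$ is joined to the rest by a single edge; then $e_G(S,V\setminus S)=1<\LFR{(F_{k+1}-1)/2}$ once $k\ge 7$, but that only yields $\ell=1$. To reach $\ell=F_{k-4}$ I again pass to a disjoint union: unrolling the recursion $T_k=(T_{k-1}\cup T_{k-2})+uv$ repeatedly expresses $T_k$ as a spine with many pendant copies of small $T_j$'s, and collecting $F_{k-4}$ leaves/small-subtree-roots into $S$ keeps the cut bounded by $F_{k-4}$, which is $<\LFR{(F_{k+1}-1)/2}$ for $k\ge 7$; alternatively, I can apply Theorem~\ref{thm:bal_lowbound} with $\ell$ chosen from the degree sequence, using $\sum_{i=1}^{F_{k-4}}d_i<\LFR{e(T_k)/2}$, which is a clean Fibonacci estimate since $T_k$ has only $O(k)$ vertices of degree $\ge 2$ while $e(T_k)$ grows exponentially.

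\textbf{Upper bounds.} By Theorem~\ref{thm:bal=ex} it suffices to find, for each $G$, an $\cH_G$-free graph on $n$ vertices with the stated number of edges; the natural candidate is a star forest. The point is that any $H\in\cH_G$ has $\LFR{e(G)/2}$ edges and no isolated vertices, hence at most $2\LFR{e(G)/2}\le e(G)$ vertices, and — crucially — $H$ must contain a vertex of degree at least $\lceil \LFR{e(G)/2}/\nu(G)\rceil$ where I bound the number of non-leaf-adjacent components... more usefully: since $\cH_G$ consists of subgraphs of a \emph{tree}, every $H\in\cH_G$ is a forest, and a forest with $\LFR{e(G)/2}$ edges and maximum degree $\le \Delta(G)$ has at least $\lceil \LFR{e(G)/2}/\Delta(G)\rceil$ components that are not single edges... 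The cleanest route: let $D$ be the number of edges of the largest star forest contained in $G$ (i.e.\ the maximum matching-like quantity where stars replace edges); then the disjoint union of $n/(\text{something})$ stars avoiding $H$ gives the bound. I would instead argue directly: a \emph{star} $K_{1,t}$ with $t=\Delta(G)+ \lfloor e(G)/2\rfloor$ (roughly) together with $n-1-t$ isolated vertices is $\cH_G$-free because every $H\in\cH_G$, being a forest on $\LFR{e(G)/2}\ge 1$ edges, has two vertices of degree $\ge 1$ that are non-adjacent (for $e(G)\ge 3$), so $H\not\subseteq$ any single star; thus one may take the \emph{double star} or a bounded union of stars. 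The precise extremal construction matching $(F_{k-2}-2)n-\binom{F_{k-2}-1}{2}$ is $K_{F_{k-2}-2}\vee \overline{K_{n-F_{k-2}+2}}$, i.e.\ the complete split graph with clique part of size $F_{k-2}-2$: this has exactly $(F_{k-2}-2)(n-F_{k-2}+2)+\binom{F_{k-2}-2}{2}=(F_{k-2}-2)n-\binom{F_{k-2}-1}{2}$ edges, and I must show it contains no $H\in\cH_{T_k}$. For that I would use that a subgraph of $T_k$ on $\LFR{e(T_k)/2}$ edges with no isolated vertices spans at least, say, $f(k)$ vertices outside any fixed clique of size $F_{k-2}-2$ and needs more independent structure than the split graph provides — concretely, any forest in the split graph $K_s\vee\overline{K_{n-s}}$ has at most $s$ edges incident to the independent part plus $\binom{s}{2}$ inside, and being a forest it has at most $n-1$ edges but also at most $2s-1+(\text{isolated-free budget})$... so the real content is the inequality ``$\LFR{e(T_k)/2}>$ (max edges of an isolated-vertex-free subforest of $T_k$ embeddable in $K_{F_{k-2}-2}\vee\overline{K_\infty}$)''. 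The $+\delta_{k=7}$ correction and the separate $T_6$ statement I would handle by the same method but checking the boundary arithmetic by hand (for $T_6$, $e(T_6)=F_7-1=12$, $\LFR{e/2}=6$, and the split-graph clique size rounds differently). For $A_k$ and $B_k$ the upper bound $(2^{k-3}-1)n-\binom{2^{k-3}}{2}$ comes from the split graph with clique part of size $2^{k-3}-1$, verified against $\cH_{B_k}$ in exactly the same way, using $e(B_k)=2^{k-1}$, $\LFR{e(B_k)/2}=2^{k-2}$, $\Delta(B_k)=k-1$.

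\textbf{Main obstacle.} The delicate part is the upper bound: proving the complete split graph $K_s\vee\overline{K_{n-s}}$ (with $s=F_{k-2}-2$, resp.\ $2^{k-3}-1$) is genuinely $\cH_G$-free, i.e.\ that \emph{no} isolated-vertex-free subgraph of $G$ on $\LFR{e(G)/2}$ edges embeds into it. This requires understanding which subforests of $T_k$ (resp.\ $A_k$, $B_k$) with a prescribed number of edges can be ``squeezed'' into a graph whose independent set carries only pendant edges; equivalently, bounding from below the number of internal (degree-$\ge 2$) vertices, or the independence number, of every member of $\cH_G$. I expect this to hinge on a structural lemma about $T_k$ of the form ``every subtree/subforest of $T_k$ with $e$ edges contains a sub-star-forest with at least $e/\Delta(T_k)$ edges'' combined with an exact count of how the split graph's clique size $F_{k-2}-2$ relates to the partial sums of the degree sequence of $T_k$ — precisely the same degree-sequence/partial-sum quantity that drives Theorem~\ref{thm:bal_lowbound}, so I anticipate the upper and lower bounds are governed by matching parameters and the gap between $F_{k-4}$ and $F_{k-2}-2$ (resp.\ $2^{k-5}$ and $2^{k-3}-1$) is exactly the slack between the best cut and the best split-graph construction. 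The $\delta_{k=7}$ term signals that for small $k$ the rounding in $\LFR{e(G)/2}$ interacts with the split-graph edge count non-monotonically, so those cases need separate verification, which I would do by direct computation.
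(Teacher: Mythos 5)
There is a genuine gap, and it is a direction error in the upper bound. You write that ``by Theorem~\ref{thm:bal=ex} it suffices to find, for each $G$, an $\cH_G$-free graph on $n$ vertices with the stated number of edges,'' and you then devote the bulk of the argument to proving that the complete split graph $K_s\vee\overline{K_{n-s}}$ contains no member of $\cH_G$. But exhibiting an $\cH_G$-free graph with $M$ edges proves $\ex(n,\cH_G)\ge M$, i.e.\ a \emph{lower} bound on $\bal(n,G)=\ex(n,\cH_G)$, not an upper bound. For the upper bound you must show that \emph{every} $n$-vertex graph with more than $M$ edges contains \emph{some} member of $\cH_G$. The paper does this by picking a single well-chosen $H\in\cH_G$ (so that $\ex(n,\cH_G)\le\ex(n,H)$) for which the Tur\'an number is known exactly: $H$ is a star forest $S_k\subset T_k$ (resp.\ $R_k\subset B_k$) with $\LFR{e(G)/2}$ edges, and $\ex(n,S_k)$ is computed from the Lidick\'y--Liu--Palmer theorem (Theorem~\ref{thm:lidicky}). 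The split graph you describe is the extremal configuration \emph{inside} that theorem, which is why your target edge count matches; but verifying it is $\cH_G$-free (which would require ruling out every isolated-vertex-free half-size subforest of $T_k$, not just $S_k$) is neither necessary nor sufficient for the claimed inequality. The ``main obstacle'' you identify is therefore an obstacle to proving the wrong statement.

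Two smaller points. First, for the lower bound your primary route — exhibit a \emph{single} set $S$ with $e_G(S,V\setminus S)<\LFR{e(G)/2}$ — does not suffice: Lemma~\ref{lemma:cut} needs $\mathrm{MaxCut}(G,\ell)<\LFR{e(G)/2}$, a bound on \emph{all} sets of size at most $\ell$, since a copy of some $H\in\cH_G$ inside $K_{\ell,n-\ell}$ could sit on any $\ell$-subset of $V(G)$. Your fallback via the partial sums $\sum_{i=1}^{\ell}d_i<\LFR{e(G)/2}$ is the correct (and the paper's) route, carried out with the degree profiles of $T_k$ and $A_k$ (Lemmas~\ref{lemma:Tk_deg} and~\ref{lemma:Ak_deg}). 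Second, the middle inequality $\bal(n,A_k)\le\bal(n,B_k)$ cannot come from a containment $\cH_{A_k}\subseteq\cH_{B_k}$, because members of $\cH_{A_k}$ have $2^{k-2}-1$ edges while members of $\cH_{B_k}$ have $2^{k-2}$; the paper instead argues directly that deleting the pendant edge from a balanced copy of $B_k$ yields a balanced copy of $A_k$ (Lemma~\ref{lem:AleqB_bal}).
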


Before continuing, we tackle the relation between the balancing number of $A_k$ and $B_k$ in the following result. 

\begin{lemma}\label{lem:AleqB_bal}
For $k\geq 2$, we have that $\bal(n, A_k)\leq \bal(n, B_k)$.
\end{lemma}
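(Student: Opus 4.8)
The plan is to exhibit $B_k$ as obtained from $A_k$ by attaching a single pendant edge at a vertex of maximum degree, and then relate the relevant Tur\'an numbers via \Cref{thm:bal=ex}. Recall from the definition of $\mathcal{B}$ that if $u$ is a vertex of maximum degree in $A_k$ and $z\notin V(A_k)$, then $B_k = A_k + uz$. In particular $e(B_k) = e(A_k)+1$ and $A_k \subset B_k$. Since both $A_k$ and $B_k$ are global amoebas (by \Cref{thm:ABk_local} together with \Cref{prop:local2global}), \Cref{thm:bal=ex} gives, for $n$ large enough, $\bal(n,A_k) = \ex(n,\cH_{A_k})$ and $\bal(n,B_k) = \ex(n,\cH_{B_k})$, so it suffices to show $\ex(n,\cH_{A_k}) \le \ex(n,\cH_{B_k})$.

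First I would record the arithmetic: writing $m = e(A_k)$, we have $e(B_k) = m+1$, and the thresholds in the definition of $\cH$ are $\LFR{m/2}$ and $\LFR{(m+1)/2}$. These coincide when $m$ is even, and differ by $1$ when $m$ is odd; in either case $\LFR{(m+1)/2} \ge \LFR{m/2}$. The key structural claim I would prove is:

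\begin{quote}
\emph{Every $H \in \cH_{A_k}$ is contained, as a subgraph, in some member of $\cH_{B_k}$.}
\end{quote}

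To see this, take $H \in \cH_{A_k}$, so $H \subset A_k \subset B_k$, $H$ has $\LFR{m/2}$ edges and no isolated vertices. I need to enlarge $H$ to a subgraph $H'$ of $B_k$ with $\LFR{(m+1)/2}$ edges and still no isolated vertices. If $m$ is even, take $H' = H$ and we are done. If $m$ is odd, I need to add exactly one edge of $B_k$ to $H$ without creating isolated vertices — which is automatic, since adding an edge never creates an isolated vertex. Such an edge exists because $H$ has $\LFR{m/2} = (m-1)/2 < m+1 = e(B_k)$ edges, so $E(B_k)\setminus E(H) \neq \emptyset$; pick any such edge $e$ and set $H' = H + e$. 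Then $H' \subset B_k$, $e(H') = (m+1)/2 = \LFR{(m+1)/2}$, and $H'$ has no isolated vertices, so $H' \in \cH_{B_k}$, with $H \subseteq H'$. This proves the claim.

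Finally I would conclude by the standard monotonicity of Tur\'an numbers under the subgraph relation between families: if every member of a family $\mathcal{F}_1$ contains (as a subgraph) some member of a family $\mathcal{F}_2$, then any $\mathcal{F}_2$-free graph is also $\mathcal{F}_1$-free, hence $\ex(n,\mathcal{F}_1) \ge \ex(n,\mathcal{F}_2)$. Wait — I must be careful with the direction here: the claim above says each $H\in\cH_{A_k}$ \emph{is contained in} some $H'\in\cH_{B_k}$, i.e. $H \subseteq H'$ with $H' \in \cH_{B_k}$. So a graph that contains no member of $\cH_{A_k}$ need not avoid $\cH_{B_k}$; rather, a graph $F$ that contains some $H' \in \cH_{B_k}$ automatically contains $H \in \cH_{A_k}$ (take $H\subseteq H'$). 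Hence $\cH_{B_k}$-containment implies $\cH_{A_k}$-containment, so every $\cH_{A_k}$-free graph is $\cH_{B_k}$-free, giving $\ex(n,\cH_{A_k}) \le \ex(n,\cH_{B_k})$. Combining with \Cref{thm:bal=ex} yields $\bal(n,A_k) \le \bal(n,B_k)$ for all $n$ sufficiently large, and since the balancing number is defined for large $n$, this is the desired conclusion; for completeness one may also remark the statement is trivial for the small cases by the explicit computations in \Cref{table:cte_bal}.

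The main obstacle, modest as it is, is getting the logical direction of the Tur\'an-number comparison right — it is easy to flip it — and making sure the ``no isolated vertices'' condition in the definition of $\cH$ is handled correctly when passing from $H$ to $H'$ (here it is painless, since we only add edges, never delete). A secondary point to state cleanly is the parity case split on $m = e(A_k)$, which is where the extra pendant edge of $B_k$ is actually used.
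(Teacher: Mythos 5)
There is a genuine gap, and you flagged the exact spot yourself without resolving it. To get $\ex(n,\cH_{A_k})\le \ex(n,\cH_{B_k})$ you need that every $\cH_{A_k}$-free graph is $\cH_{B_k}$-free, i.e.\ (contrapositive) that \emph{every member $H'$ of $\cH_{B_k}$ contains a copy of some member of $\cH_{A_k}$}. Your structural claim proves the opposite inclusion: every $H\in\cH_{A_k}$ sits inside \emph{some} $H'\in\cH_{B_k}$ of the form $H+e$. That gives a map from $\cH_{A_k}$ into $\cH_{B_k}$, but nothing forces an arbitrary $H'\in\cH_{B_k}$ to lie in its image, so when you write ``a graph $F$ that contains some $H'\in\cH_{B_k}$ automatically contains $H\in\cH_{A_k}$ (take $H\subseteq H'$)'' you are asserting precisely the unproven converse. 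The converse \emph{is} true, but proving it goes by deletion rather than addition: given $H'\subseteq B_k$ with $2^{k-2}$ edges and no isolated vertices, delete the pendant edge $uz$ if it is present (otherwise any edge) and then delete any vertices that become isolated; the result is a subgraph of $A_k$ with $2^{k-2}-1=\lfloor e(A_k)/2\rfloor$ edges and no isolated vertices, hence a member of $\cH_{A_k}$. Note that here the ``no isolated vertices'' condition genuinely requires attention --- deletion can create isolated vertices --- which is the opposite of your remark that the condition is painless because ``we only add edges.'' (Also, the parity split on $m=e(A_k)$ is vacuous: $e(A_k)=2^{k-1}-1$ is odd for all $k\ge 2$.)

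For comparison, the paper avoids the Tur\'an detour entirely: since $e(B_k)=2^{k-1}$ is even, a balanced copy of $B_k$ has exactly $2^{k-2}$ edges of each color, and deleting the pendant edge $uz$ leaves a copy of $A_k$ with $2^{k-2}$ edges of one color and $2^{k-2}-1$ of the other, which is balanced for $A_k$. Hence any coloring containing a balanced $B_k$ contains a balanced $A_k$, and $\bal(n,A_k)\le\bal(n,B_k)$ follows directly from the definition of the balancing number. Your route through \Cref{thm:bal=ex} can be repaired as indicated above, but it is longer and its key step must be argued in the deletion direction.
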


\begin{proof}
Since $B_k$ has $2^{k-1}$ edges, any balanced copy of $B_k$ has $2^{k-2}$ edges of each color. On the other hand, by definiton of $B_k$, there exists one leaf $z$ in $B_k$ such that $B_k- z \cong A_k$. This implies that, by removing one (particular) edge from a balanced copy of $B_k$, we obtain a balanced copy of $A_k$. 
Therefore, any $2$-edge coloring with a balanced copy of $B_k$ contains a balanced copy of $A_k$. This implies $\bal(n, A_k)\leq \bal(n, B_k)$, as desired.
\end{proof}

\begin{table}
    \centering
    \begin{tabular}{ |c|c|c| } 
 \hline
 $G$ & $\bal(n,G)$\\
 \hline
 $T_1=T_2=A_2=B_1\cong K_2$ & 0 \\ 
 $T_3=A_3\cong P_3$, $B_2\cong P_2$, $T_4$, $B_3$ & 1 \\
 $A_4$ & 3 \\
 $T_5$, $B_4$ & 6\\
 \hline
\end{tabular}
\caption{The balancing numbers for the first graphs $G$ in $\mathcal{T}$, $\mathcal{A}$ and $\mathcal{B}$.}
    \label{table:cte_bal}
\end{table}

\subsubsection{Lower bounds}\label{subsec:low_bal}

We begin by defining an equivalent expression for the degree sequence of $G$. For a graph $G$, let $V_i(G)$ denote the set of vertices of degree $i$ in $G$. The profile of $G$ is given by $\{|V_i(G)|; i \ge 0\}$.

Although the general lower bound on \Cref{thm:bal_lowbound} is not meaningful for global amoeba trees, the construction of the classes in our case studies allows us to handle finer bounds for the partial sums $\sum_{i=1}^\ell d_i$ of the degree sequence of the graphs analysed.  

We begin by computing the profile of trees in $\mathcal T$ and $\mathcal A$. For a non-negative integer $k$, let $F_k$ denote the $k$-th Fibonacci number (so that $F_1=F_2=1$, $F_3=2$ and so on).
 
\begin{lemma}\label{lemma:Tk_deg}
The number of vertices in $T_k$ is $2F_k$. For $k\geq 4$, the profile of $T_k$ is given by
 \begin{align*}
     |V_i(T_k)|=
     \begin{cases}
     F_{k+1-i} &1\le i\le 2, \\
     F_{k-1-i} &3\le i\le k-2, \\
     1 &i=k-1, \\
     0 &i\ge k. \\
     \end{cases}
 \end{align*} 
\end{lemma}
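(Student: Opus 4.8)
The statement is about the profile (and vertex count) of the Fibonacci-type trees $T_k$, so the natural approach is induction on $k$ using the recursive definition $T_k = (H\cup J)+uv$ with $H\cong T_{k-1}$, $J\cong T_{k-2}$, and $uv$ joining vertices of maximum degree in each. First I would nail down the base cases: one checks directly from \Cref{fig:Tk} that $n(T_1)=n(T_2)=2$ (both are $K_2$), $n(T_3)=4$, $n(T_4)=6$, matching $2F_1=2F_2=2$, $2F_3=4$, $2F_4=6$; and the profile of $T_4$ can be read off the picture. The vertex count $n(T_k)=2F_k$ then follows immediately by induction since $n(T_k)=n(T_{k-1})+n(T_{k-2})=2F_{k-1}+2F_{k-2}=2F_k$.

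For the profile, the key observation is how degrees change when we form $T_k$ from $H\cong T_{k-1}$ and $J\cong T_{k-2}$: every vertex keeps its degree \emph{except} the vertex $u$ of maximum degree in $H$ and the vertex $v$ of maximum degree in $J$, each of which gains exactly one to its degree. I would first record a small auxiliary fact: for $k\ge 3$, the maximum degree of $T_k$ is $k-1$ and it is attained by a unique vertex (this is visible from the claimed profile, where $|V_{k-1}(T_k)|=1$ and $|V_i(T_k)|=0$ for $i\ge k$, and is exactly the well-definedness remark made when the family was introduced). Granting this, the unique max-degree vertex of $H\cong T_{k-1}$ has degree $k-2$ in $H$ and becomes degree $k-1$ in $T_k$; the unique max-degree vertex of $J\cong T_{k-2}$ has degree $k-3$ in $J$ and becomes degree $k-2$ in $T_k$. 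So, writing everything additively,
\begin{align*}
|V_i(T_k)| = |V_i(T_{k-1})| + |V_i(T_{k-2})| - [\,i=k-2\,] + [\,i=k-1\,] - [\,i=k-3\,] + [\,i=k-2\,],
\end{align*}
where the $-[i=k-2]+[i=k-1]$ accounts for $u$ moving from degree $k-2$ to $k-1$, and $-[i=k-3]+[i=k-2]$ for $v$ moving from degree $k-3$ to $k-2$. (One must treat $k=5$ slightly carefully since then $J\cong T_3$ and the "degree $3\le i\le k-2$'' band in the formula for $T_{k-2}$ is empty; I would either start the induction at $k=6$ with $k=4,5$ as explicit base cases, or check the boundary indices by hand.)

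Then it is a routine verification that the claimed closed form satisfies this recursion in each degree band, using the Fibonacci identity $F_m=F_{m-1}+F_{m-2}$: for $1\le i\le 2$ we get $F_{(k-1)+1-i}+F_{(k-2)+1-i}=F_{k-i}+F_{k-1-i}=F_{k+1-i}$; for the band $3\le i\le k-4$ the same identity gives $F_{(k-1)-1-i}+F_{(k-2)-1-i}=F_{k-1-i}$; and the boundary indices $i=k-3,k-2,k-1$ and $i\ge k$ are checked individually against the correction terms $-[i=k-2]+[i=k-1]-[i=k-3]+[i=k-2]$ (which simplifies to $[i=k-1]-[i=k-3]$), confirming in particular that $|V_{k-1}(T_k)|=0+0+1=1$ and $|V_i(T_k)|=0$ for $i\ge k$, which re-establishes the auxiliary max-degree fact and closes the induction.

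\textbf{Main obstacle.} The genuine content is small; the delicate part is bookkeeping at the boundaries of the degree bands — making sure the formula for $|V_i(T_{k-1})|$ and $|V_i(T_{k-2})|$ is being applied with the correct band (e.g. that $i=k-2$ falls in the "$3\le i\le k-2$'' band for $T_{k-1}$ but is the "$i=k-3$ isolated max'' case shifted for $T_{k-2}$), and handling the small values $k=4,5,6$ where some bands degenerate. I would guard against this by writing out $T_4,T_5,T_6$ explicitly as base cases before running the general inductive step for $k\ge 7$.
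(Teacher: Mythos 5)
Your proposal is correct and follows essentially the same route as the paper's proof: induction on $k$ from small base cases, using that in forming $T_k$ only the two joined maximum-degree vertices change degree (each gaining exactly one), then closing the low-degree bands with the Fibonacci recursion $|V_i(T_k)|=|V_i(T_{k-1})|+|V_i(T_{k-2})|$ and checking the top degrees $i\in\{k-3,k-2,k-1\}$ individually. One cosmetic caveat: your auxiliary claim that the maximum degree is attained by a \emph{unique} vertex for all $k\ge 3$ fails at $k=3$, where $T_3$ (a path on four vertices) has two similar vertices of degree $2$ — but this is harmless since $T_3$ only enters through the hand-checked base case $k=5$, and uniqueness does hold for all $k\ge 4$.
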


\begin{proof}
Recall that,for $k\ge 4$, $T_k$ is obtained from the union of two copies of $T_{k-1}$ and $T_{k-2}$ joined by an edge connecting two vertices of degrees $k-2$ and $k-3$, respectively. In particular,the vertex of maximum degree is unique in $T_k$.

We proceed by induction on $k$. The base cases $k\in \{4,5\}$ can be verified directly (see, also, Figure~\ref{fig:Tk}).
Let $k\ge 6$ and suppose the statement is valid for $k-1$ and $k-2$. 

By the induction hypothesis, $T_{k-1}$ and $T_{k-2}$ together contain precisely two vertices of degree $k-3$ and one vertex of degree $k-2$ (two of which are the maximum degree vertices of their corresponding trees). Thus, $T_k$ contains precisely one vertex of degree $i$ for $i\in\{k-3,k-2,k-1\}$. This establishes $|V_{i}(k)|=1$ for $k-3\le i\le k-1$.

Finally, in constructing $T_k$, the vertices of degree $i$ with $1\le i<k-3$ do not alter their degrees. That is, $V_i(T_k)$ is the disjoint union of $V_i(T_{k-1})$ and $V_i(T_{k-2})$; which implies the recursive equation  
\begin{align*}
    |V_i(T_k)|=|V_i(T_{k-1})|+|V_i(T_{k-2})|;
\end{align*}
completing the proof for the expression of $|V_{i}(k)|$ in both the cases $i\in \{1,2\}$ and $i\in \{3,\ldots, k-4\}$.
\end{proof}

\begin{lemma}\label{lemma:Ak_deg}
The number of vertices in $A_k$ is $2^{k-1}$. For $k\geq 2$, the profile of $A_k$ is given by
 \begin{align*}
     |V_i(A_k)|=
     \begin{cases}
     2^{k-i-1} &1\le i\le k-2, \\
     2 &i=k-1, \\
     0 &i\ge k. \\
     \end{cases}
 \end{align*}     
\end{lemma}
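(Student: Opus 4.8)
The plan is to prove Lemma~\ref{lemma:Ak_deg} by induction on $k$, mirroring the structure of the proof of Lemma~\ref{lemma:Tk_deg} but working with the self-similar recursion $A_k = (H\cup J)+uv$ where $H\cong J\cong A_{k-1}$. First I would verify the vertex count: since $A_1$ is a single vertex, $n(A_1)=1=2^0$, and $n(A_k)=2\,n(A_{k-1})$ gives $n(A_k)=2^{k-1}$ immediately; this also tells us $\sum_i |V_i(A_k)| = 2^{k-1}$, which is a useful consistency check against the claimed profile (indeed $\sum_{i=1}^{k-2}2^{k-i-1}+2 = (2^{k-2}-1)+2 = 2^{k-2}+1$... so one should be careful here — the sum $\sum_{i=1}^{k-2}2^{k-i-1} = 2^{k-2}+2^{k-3}+\cdots+2^{1} = 2^{k-1}-2$, hence total $2^{k-1}-2+2 = 2^{k-1}$, which checks out). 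The base case $k=2$: $A_2\cong K_2$ has two vertices of degree $1 = k-1$, matching $|V_1(A_2)|=2$ and $|V_i(A_2)|=0$ for $i\ge 2$.

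For the inductive step, assume the profile formula holds for $A_{k-1}$ with $k\ge 3$. When we form $A_k$ from two disjoint copies $H,J$ of $A_{k-1}$ by adding the edge $uv$ between their (unique-up-to-similarity) maximum-degree vertices, every vertex other than $u$ and $v$ retains its degree, so $V_i(A_k) = V_i(H)\sqcup V_i(J)$ for all $i$ with $1\le i\le k-3$ that are not equal to $k-2$; this yields $|V_i(A_k)| = 2|V_i(A_{k-1})| = 2\cdot 2^{k-1-i-1} = 2^{k-i-1}$ for $1\le i\le k-3$. The two vertices $u$ and $v$ each had degree $k-2$ in $A_{k-1}$ (the maximum degree, by the induction hypothesis $|V_{k-2}(A_{k-1})|=2$) and now have degree $k-1$ in $A_k$; no other vertex has degree $k-2$ in either copy, so $|V_{k-2}(A_k)| = 2|V_{k-2}(A_{k-1})| - 2 = 4-2 = 2$ and $|V_{k-1}(A_k)| = 2$, while $|V_i(A_k)|=0$ for $i\ge k$. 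This recovers exactly the claimed cases $i\le k-2$ (note $i=k-2$ now falls under the formula $2^{k-i-1}=2^1=2$), $i=k-1$, and $i\ge k$.

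The one genuinely delicate point — and the step I expect to require the most care — is the boundary book-keeping when $k$ is small, specifically ensuring the ranges $1\le i\le k-2$ and the special value $i=k-1$ do not overlap or leave gaps, and that the induction hypothesis applied to $A_{k-1}$ actually gives $|V_{k-2}(A_{k-1})|=2$ (this is the "$i=k'-1$" case of the formula for $k'=k-1$, not the "$1\le i\le k'-2$" case). For $k=3$ one should check directly that $A_3\cong P_3$ has profile $|V_1|=2$, $|V_2|=1$, matching $2^{3-1-1}=2$ and $|V_{k-1}(A_3)|=|V_2(A_3)|$ — wait, here $i=2$ is simultaneously $k-1$ and would be covered by $i\le k-2$ only if $k\ge 4$, so for $k=3$ we are in the regime where the "$1\le i\le k-2$" range is just $\{1\}$ and $i=2=k-1$ is the separate case with value $2$; but $P_3$ has only one vertex of degree $2$. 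This signals that $k=3$ may need to be treated as an additional base case, or the formula is intended for $k\ge 4$ in its "split" form with $k\in\{2,3\}$ verified separately; I would state the induction as starting from $k=4$ with $k=2,3$ checked by hand, exactly as Lemma~\ref{lemma:Tk_deg} handles $k\in\{4,5\}$ as base cases.
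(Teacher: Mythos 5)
Your induction is the same as the paper's: base case $k=2$, then form $A_k$ from two copies of $A_{k-1}$, double the counts $|V_i|$ for small $i$, and track the four maximum-degree vertices, two of which gain a degree. That part of your argument is correct and complete, and the paper runs the induction from $k\ge 3$ with only $k=2$ as a base case, exactly as your main step would allow.

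The genuine problem is your final paragraph, where you claim that $A_3$ has profile $|V_1(A_3)|=2$, $|V_2(A_3)|=1$ and conclude that the lemma might fail (or need patching) at $k=3$. This is a counting error. By your own computation $n(A_3)=2^{3-1}=4$: the graph $A_3=(K_2\cup K_2)+uv$ is the path on \emph{four} vertices (the paper writes this as $P_3$, indexing paths by their number of edges), whose degree sequence is $2,2,1,1$. Hence $|V_1(A_3)|=2=2^{3-1-1}$ and $|V_2(A_3)|=2$, which is exactly the "$i=k-1$" case of the formula; you appear to have substituted the three-vertex path instead. There is no overlap or gap in the ranges either: for $k=3$ the range $1\le i\le k-2$ is just $\{1\}$ and $i=2=k-1$ is handled separately, and for $k=2$ the first range is empty while $|V_1(A_2)|=2$ matches $K_2$. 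So no extra base case is needed, and your inductive step already covers $k=3$ correctly (four degree-$1$ vertices in $K_2\cup K_2$, two of which become degree $2$). Once that paragraph is deleted or corrected, your proof coincides with the paper's.
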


\begin{proof}
Recall that $A_k$ is obtained from the union of two copies of $A_{k-1}$ joined by an edge connecting two vertices of maximum degree; each copy containing two such maximal degree vertices which, additionally, are similar. 

We proceed by induction on $k$. The base case $k=2$ can be verified directly (see, also, Figure~\ref{fig:Bk}).
Let $k\ge 3$ and suppose the statement is valid for $k-1$. 

In two copies of $A_{k-1}$ there are four vertices of degree $k-2$ which are similar. When we join two of them to form $A_k$ we obtain two vertices of degree $k-1$ and are left with two vertices of degree $k-2$. Thus, $|V_i(A_k)|=2$ for $k-2\le i\le k-1$. On the other hand, the vertices of degree $i$ with $1\le i< k-2$ do not alter their degrees. That is $V_i(A_k)$ is the disjoint union of two copies of $V_i(A_{k-1})$; which implies, by the induction hypothesis 
\begin{align*}
    |V_i(A_k)|=2|V_i(A_{k-1})|=2^{k-i-1};
\end{align*}
completing the proof for the cases $1\le i<k-2$.
\end{proof}

We are ready to prove the lower bounds using \Cref{thm:bal_lowbound}.

\begin{prop}\label{prop:Tk_lowbal}
For $k\geq6$, $\bal(n,T_k)\ge F_{k-4}(n-F_{k-4})$.
\end{prop}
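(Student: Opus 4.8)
The plan is to apply \Cref{thm:bal_lowbound} directly, using the degree sequence of $T_k$ computed in \Cref{lemma:Tk_deg}. Recall that \Cref{thm:bal_lowbound} gives $\bal(n,T_k) \ge \ell(n-\ell)$ for any $\ell \ge 1$ with $\sum_{i=1}^{\ell} d_i < \LFR{\frac{m}{2}}$, where $m = e(T_k)$ and $d_1 \ge d_2 \ge \cdots$ is the (weakly decreasing) degree sequence. So the whole task reduces to showing that the choice $\ell = F_{k-4}$ is admissible, i.e.\ that the sum of the $F_{k-4}$ largest degrees of $T_k$ is strictly less than $\LFR{m/2}$.

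First I would record the basic parameters: since $T_k$ is a tree on $2F_k$ vertices, it has $m = 2F_k - 1$ edges, so $\LFR{m/2} = F_k - 1$. Next I would read off the top of the degree sequence from \Cref{lemma:Tk_deg}: the degrees, listed in decreasing order, begin with a single vertex of degree $k-1$, then a single vertex of each degree $k-2, k-3, \ldots, 3$ (these come from the ranges $|V_i(T_k)| = F_{k-1-i}$ which equals $1$ exactly when $k-1-i \in \{1,2\}$, and more generally $|V_i| = F_{k-1-i}$ for $3 \le i \le k-2$), and then $F_{k-1}$ vertices of degree $2$ and $F_{k-2}$ vertices of degree $1$. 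The partial sums of these largest degrees are what must be bounded. I would sum the $F_{k-4}$ largest degrees by first taking all the ``high-degree'' vertices (those of degree $\ge 3$), whose total degree is a telescoping-type Fibonacci sum that can be evaluated in closed form using $\sum_{j} F_j$ identities, and then filling the remaining slots with vertices of degree $2$. The key arithmetic fact to verify is that this total stays below $F_k - 1$; I expect this to follow from standard Fibonacci identities (e.g.\ $\sum_{j=1}^{t} F_j = F_{t+2} - 1$ and $F_{k} = F_{k-1} + F_{k-2}$), after checking that $F_{k-4}$ is large enough to include all degree-$\ge 3$ vertices but the leftover count of degree-$2$ vertices needed is modest.

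The main obstacle is the bookkeeping: I must be careful about exactly how many vertices have degree $\ge 3$ (there are $\sum_{i=3}^{k-2} F_{k-1-i} + 1 = \sum_{j=1}^{k-4} F_j + 1 = F_{k-2}$ of them, using the index shift $j = k-1-i$ and the $\sum F_j$ identity, plus the single maximum-degree vertex — one should double-check whether the maximum vertex of degree $k-1$ is already counted in that range or separate), compare that count with $F_{k-4}$, and then correctly split $\ell = F_{k-4}$ into ``all high-degree vertices we can afford'' plus ``some degree-$2$ vertices''. Since $F_{k-4} < F_{k-2}$, in fact $\ell = F_{k-4}$ does \emph{not} exhaust all high-degree vertices, so the partial sum $\sum_{i=1}^{F_{k-4}} d_i$ is just the sum of the $F_{k-4}$ largest degrees, namely $(k-1) + (k-2) + \cdots$ down through a contiguous block of the large degrees; this is a sum of an arithmetic-like run of consecutive integers (each appearing once) of length $F_{k-4}$ starting from $k-1$, i.e.\ roughly $\binom{k-1}{2} - \binom{k-1-F_{k-4}}{2}$, which is polynomial in $k$ while $F_k - 1$ grows exponentially, so the inequality $\sum_{i=1}^{F_{k-4}} d_i < F_k - 1$ will hold comfortably for all $k \ge 6$ after a direct check of the small cases. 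Once admissibility of $\ell = F_{k-4}$ is established, \Cref{thm:bal_lowbound} immediately yields $\bal(n,T_k) \ge F_{k-4}(n - F_{k-4})$, completing the proof.
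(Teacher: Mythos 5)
Your overall strategy is exactly the paper's: invoke \Cref{thm:bal_lowbound} with $\ell=F_{k-4}$, so that everything reduces to the admissibility check $\sum_{i=1}^{F_{k-4}}d_i<\LFR{m/2}=F_k-1$. The gap is in how you carry out that check. You describe the top of the degree sequence as ``a single vertex of degree $k-1$, then a single vertex of each degree $k-2,k-3,\ldots,3$,'' and accordingly treat $\sum_{i=1}^{F_{k-4}}d_i$ as a run of $F_{k-4}$ \emph{distinct} consecutive integers starting at $k-1$, roughly $\binom{k-1}{2}-\binom{k-1-F_{k-4}}{2}$, hence polynomial in $k$ and trivially below the exponentially growing $F_k-1$. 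But \Cref{lemma:Tk_deg} gives $|V_i(T_k)|=F_{k-1-i}$ for $3\le i\le k-2$, so the multiplicities are $1,1,2,3,5,\ldots$ as the degree decreases from $k-2$: already $|V_{k-4}(T_k)|=F_3=2$. For example, in $T_{10}$ the top $F_6=8$ degrees are $9,8,7,6,6,5,5,5$, summing to $51$, against the bound $F_{10}-1=54$. The partial sum is in fact $\Theta(F_k)$, of the same exponential order as the target, and the inequality is tight up to small additive/multiplicative slack --- it does not ``hold comfortably,'' and your polynomial-versus-exponential argument fails.

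What makes the paper's verification work is the observation that $F_{k-4}$ is \emph{exactly} the number of vertices of degree greater than $4$ (for $k\ge 7$): $|V_{>4}(T_k)|=1+\sum_{i=5}^{k-2}F_{k-1-i}=1+\sum_{i=1}^{k-6}F_i=F_{k-4}$. Hence $\sum_{i=1}^{F_{k-4}}d_i=\sum_{i=5}^{k-1}i\,|V_i(T_k)|$, which by the handshake lemma equals $4F_k-2-\sum_{i=1}^{4}i\,|V_i(T_k)|$; one then shows $\sum_{i=1}^{4}i\,|V_i(T_k)|=F_k+2F_{k-1}+3F_{k-4}+4F_{k-5}\ge 3F_k$ via the Fibonacci identity $2F_{k-2}\le 3F_{k-4}+4F_{k-5}$, giving $\sum_{i=1}^{F_{k-4}}d_i\le F_k-2$. (The case $k=6$ is checked directly.) To repair your proposal you would need to replace the ``distinct consecutive degrees'' count with this multiplicity-aware computation; the conclusion then follows as you state.
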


\begin{proof}
Recall that $T_k$ has $m=2F_k-1$ edges and note that $\lfr{\frac{m}{2}}=F_k-1$. Let $d_1,d_2, \ldots, d_{2F_k}$ be the degree sequence of $T_k$. We will show that for $k\ge 6$,
\begin{align}\label{eq:boundfib}
    \sum_{i=1}^{F_{k-4}} d_i\le F_k-2< \LFR{\frac{m}{2}},
\end{align}
from which the statement of the lemma follows by \Cref{thm:bal_lowbound} with $\ell=F_{k-4}$.

First, consider the case $k=6$ separately. In this case, $F_{k-4}=1$, $d_1=5$ and $F_k-2=6$, so \eqref{eq:boundfib} is verified directly. 

Now, let $k\ge 7$ and let $V_{>4}(T_k)$ denote the set of vertices in $T_k$ with degree larger than four. By \Cref{lemma:Tk_deg},
\begin{align*}
    |V_{>4}(T_k)|=\sum_{i=5}^{k-1} |V_i(T_k)|= 1+ \sum_{i=5}^{k-2} F_{k-i-1} = 1 +\sum_{i=1}^{k-6} F_i.
\end{align*}
It follows from induction on $m\in \Z_+$, that $\sum_{i=1}^m F_i= F_{m+2}-1$ for $m\ge 1$. Hence $|V_{>4}(T_k)|=F_{k-4}$. Therefore, 
\begin{align}\label{eq:di}
    \sum_{i=1}^{F_{k-4}}d_i= \sum_{v\in V_{>4}(T_k)} \deg_{T_k}(v) = \sum_{i=5}^{k-1} i |V_i(T_k)|.
\end{align}
On the other hand, using the recursion of Fibonacci numbers and that $F_{m}\le 2F_{m-1}$ for $m\ge 3$, we have
\begin{align*}
    2F_{k-2}=4F_{k-4}+2F_{k-5}\le 3F_{k-4}+4F_{k-5}.
\end{align*}
Consequently, together with \Cref{lemma:Tk_deg}, we infer 
\begin{align}\label{eq:F5}
    \sum_{i=1}^{4} i |V_i(T_k)| = F_k+2F_{k-1}+3F_{k-4}+4F_{k-5}\ge F_k+2F_{k-1}+2F_{k-2}=3F_{k}.
\end{align}
Finally, by the hand-shake lemma $\sum_v \deg_{T_k}(v)= \sum_{i=1}^{k-1} i |V_i(T_k)| = 2(2F_k-1)$, which together with \eqref{eq:di} and \eqref{eq:F5} yields
\begin{align*}
\sum_{i=1}^{F_{k-4}}d_i= \sum_{i=5}^{k-1} i |V_i(T_k)|=4F_k-2 -\sum_{i=1}^{4} i |V_i(T_k)| \le F_k-2;
\end{align*}
establishing \eqref{eq:boundfib} for $k\ge 7$ and completing the proof.
\end{proof}

Similarly, we employ \Cref{lemma:Ak_deg} to provide a lower bound of $\bal(n,A_k )$.

\begin{proposition}\label{prop:Ak_lowbal}
For $k\geq 5$, $ \bal(n, A_k)\ge 2^{k-5}(n- 2^{k-5})$.
\end{proposition}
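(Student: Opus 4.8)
\textbf{Proof plan for Proposition~\ref{prop:Ak_lowbal}.} The plan is to mirror the structure of the proof of \Cref{prop:Tk_lowbal}: find an explicit $\ell$ such that the partial sum $\sum_{i=1}^{\ell} d_i$ of the (non-increasing) degree sequence of $A_k$ stays below $\LFR{\frac{e(A_k)}{2}}$, and then invoke \Cref{thm:bal_lowbound} with that $\ell$. Here $A_k$ has $2^{k-1}$ vertices by \Cref{lemma:Ak_deg}, hence $m = e(A_k) = 2^{k-1}-1$ edges and $\LFR{\frac{m}{2}} = 2^{k-2}-1$. The target is $\ell = 2^{k-5}$, so I must verify $\sum_{i=1}^{2^{k-5}} d_i \le 2^{k-2}-2 < 2^{k-2}-1 = \LFR{\frac m2}$ for $k \ge 5$.

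First I would identify which vertices the top $2^{k-5}$ degrees correspond to. Using \Cref{lemma:Ak_deg}, the number of vertices of degree larger than some threshold $t$ is $\sum_{i=t+1}^{k-1}|V_i(A_k)| = 2 + \sum_{i=t+1}^{k-2} 2^{k-i-1} = 2 + (2^{k-t-2}-1) = 2^{k-t-2}+1$; choosing $t = 3$ gives $|V_{>3}(A_k)| = 2^{k-5}+1$, which is just one more than the needed $\ell = 2^{k-5}$, while $|V_{>4}(A_k)| = 2^{k-6}+1$ is too small (for $k\ge 6$). So the top $2^{k-5}$ degrees consist of all vertices of degree $\ge 5$ together with all but one of the vertices of degree exactly $4$. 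I would bound $\sum_{i=1}^{2^{k-5}} d_i \le \sum_{v \in V_{>3}(A_k)} \deg_{A_k}(v) = \sum_{i=4}^{k-1} i\,|V_i(A_k)|$ and compute this sum in closed form: the handshake lemma gives $\sum_{i=1}^{k-1} i\,|V_i(A_k)| = 2m = 2^k - 2$, so $\sum_{i=4}^{k-1} i\,|V_i(A_k)| = 2^k - 2 - \sum_{i=1}^{3} i\,|V_i(A_k)| = 2^k-2 - \big(2^{k-2} + 2\cdot 2^{k-3} + 3\cdot 2^{k-4}\big)$. A short computation of $2^{k-2}+2^{k-2}+3\cdot 2^{k-4} = 2^{k-1} + 3\cdot 2^{k-4} = (8+3)2^{k-4} = 11\cdot 2^{k-4}$ gives $\sum_{i=4}^{k-1} i\,|V_i(A_k)| = 2^k - 2 - 11\cdot 2^{k-4} = 16\cdot 2^{k-4} - 11\cdot 2^{k-4} - 2 = 5\cdot 2^{k-4} - 2$. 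Since $\ell = 2^{k-5}$ I actually want a sharper estimate, so I would subtract the contribution of the smallest degree-$4$ vertex: $\sum_{i=1}^{2^{k-5}} d_i \le 5\cdot 2^{k-4} - 2 - 4 = 5\cdot 2^{k-4} - 6$. This needs to be compared with $2^{k-2}-2 = 4\cdot 2^{k-4}-2$; but $5\cdot 2^{k-4}-6 > 4\cdot 2^{k-4}-2$ whenever $2^{k-4} > 4$, i.e. $k > 6$, so this crude bound is \emph{not} enough and the estimate must be done more carefully.

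The main obstacle, then, is that naively bounding the partial sum by the sum over all of $V_{\ge 4}$ overshoots; one must exploit that among the $2^{k-5}$ largest degrees only relatively few are large. I would instead bound $\sum_{i=1}^{2^{k-5}} d_i$ by splitting off the genuinely high-degree vertices (degrees $\ge$ some $t_0$ growing with $k$) and controlling the remaining degree-$4$ vertices exactly. Concretely: let $t_0$ be the smallest value with $|V_{>t_0}(A_k)| = 2^{k-t_0-2}+1 \le 2^{k-5}$, i.e. $t_0 = 4$; the first $2^{k-6}+1$ terms are the degrees $\ge 5$ summing to $\sum_{i=5}^{k-1} i\,|V_i(A_k)| = 5\cdot 2^{k-4}-2 - 4\cdot 2^{k-5} = 5\cdot 2^{k-4} - 2^{k-3} - 2 = (10-4)2^{k-5}-2 = 3\cdot 2^{k-4}-2$, and the remaining $2^{k-5} - (2^{k-6}+1) = 2^{k-6}-1$ terms are each equal to $4$, contributing $4(2^{k-6}-1) = 2^{k-4}-4$. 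Hence $\sum_{i=1}^{2^{k-5}} d_i = 3\cdot 2^{k-4} - 2 + 2^{k-4} - 4 = 4\cdot 2^{k-4} - 6 = 2^{k-2}-6 < 2^{k-2}-1 = \LFR{\frac m2}$, which is exactly the inequality needed. (The small cases $k = 5$ and $k = 6$, where $2^{k-6}$ is not a positive integer count, I would check by hand: for $k=5$, $\ell=1$, $d_1 = 4 \le 2^3 - 2 = 6$; for $k=6$, $\ell=2$, $d_1 = d_2 = 5$, so $\sum = 10 \le 2^4 - 2 = 14$.) With \eqref{eq:di}-type bookkeeping in place, \Cref{thm:bal_lowbound} applied with $\ell = 2^{k-5}$ yields $\bal(n,A_k) \ge 2^{k-5}(n - 2^{k-5})$, completing the proof.
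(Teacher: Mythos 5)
Your overall strategy is exactly the paper's: use the profile of $A_k$ from \Cref{lemma:Ak_deg} to bound the partial sum $\sum_{i=1}^{2^{k-5}}d_i$ below $\LFR{\frac{m}{2}}=2^{k-2}-1$ and invoke \Cref{thm:bal_lowbound} with $\ell=2^{k-5}$. However, there is a concrete arithmetic error in your count of high-degree vertices that makes the middle of your argument internally inconsistent. From $|V_i(A_k)|=2^{k-i-1}$ for $1\le i\le k-2$ and $|V_{k-1}(A_k)|=2$, one gets $\sum_{i=t+1}^{k-2}2^{k-i-1}=2+4+\cdots+2^{k-t-2}=2^{k-t-1}-2$, hence $|V_{>t}(A_k)|=2^{k-t-1}$, not $2^{k-t-2}+1$ as you claim. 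In particular $|V_{>4}(A_k)|=2^{k-5}=\ell$ exactly (not $2^{k-6}+1$) and $|V_{>3}(A_k)|=2^{k-4}$ (not $2^{k-5}+1$). Consequently your structural claim that the top $2^{k-5}$ degrees consist of the vertices of degree $\ge 5$ together with some vertices of degree $4$ is false: they are precisely the vertices of degree $\ge 5$. Your final display is then not an equality: you sum $\sum_{i=5}^{k-1}i\,|V_i(A_k)|=3\cdot 2^{k-4}-2$, which already accounts for all $2^{k-5}$ vertices of degree $\ge 5$, and then add $4(2^{k-6}-1)$ for degree-$4$ vertices that do not belong in the sum, so you are summing the degrees of $3\cdot 2^{k-6}-1>\ell$ vertices.

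The proof survives only by accident: your over-count $2^{k-2}-6$ happens to still lie below $2^{k-2}-1$, and being an over-count it is a valid (if unjustified-as-written) upper bound for the true partial sum, which is $3\cdot 2^{k-4}-2=2^{k-2}-2^{k-4}-2$ — the value the paper obtains. The fix makes the argument shorter, not longer: since $|V_{>4}(A_k)|=2^{k-5}$, you have $\sum_{i=1}^{2^{k-5}}d_i=\sum_{i=5}^{k-1}i\,|V_i(A_k)|=2^k-2-\sum_{i=1}^{4}i\,|V_i(A_k)|=2^{k-2}-2^{k-4}-2\le 2^{k-2}-2$, with no need to split off degree-$4$ vertices or to treat $k\ge 7$ differently from $k=6$ (only $k=5$ needs a separate check, since there $\Delta(A_5)=4$ and $V_{>4}$ is empty). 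Your hand-checks of the cases $k=5$ and $k=6$ are correct.
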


\begin{proof}
Recall that $A_k$ has $m=2^{k-1}-1$ edges and note that $\lfr{\frac{m}{2}}=2^{k-2}-1$. Let $d_1,d_2, \ldots, d_{2^{k-1}}$ be the degree sequence of $A_k$. We will show that for $k\ge 5$,
\begin{align}\label{eq:boundA'}
    \sum_{i=1}^{2^{k-5}} d_i\le 2^{k-2}-2< \LFR{\frac{m}{2}},
\end{align}
from which the statement of the lemma follows by \Cref{thm:bal_lowbound} with $\ell=2^{k-5}$.

First, consider the case $k=5$ separately. In this case, $2^{k-5}=1$, $d_1=4$ and $2^{k-2}-2=6$, so \eqref{eq:boundfib} is verified directly. 

Now, let $k\ge 6$ and 
let $V_{>4}(A_k)$ denote the set of vertices in $A_k$ with degree larger than four. By \Cref{lemma:Ak_deg}, and using the notation within, 
\begin{align*}
    |V_{>4}(A_k)|=\sum_{i=5}^{k-1} |V_i(A_k)|= 2+ \sum_{i=5}^{k-2} 2^{k-i-1} = 2 +\sum_{i=1}^{k-6} 2^i= 2\left(1+\sum_{i=0}^{k-7} 2^i\right) = 2^{k-5}.
\end{align*}
Hence $|V_{>4}(A_k)|=2^{k-5}$. Therefore, 
\begin{align}\label{eq:diA'}
    \sum_{i=1}^{2^{k-5}}d_i= \sum_{v\in V_{>4}(A_k)} \deg_{A_k}(v) = \sum_{i=5}^{k-1} i |V_i(A_k)|.
\end{align}
On the other hand,  together with \Cref{lemma:Tk_deg}, we have
\begin{align}
    \sum_{i=1}^{4} i |V_i(A_k)| &= 2^{k-2}+2\cdot 2^{k-3}+3\cdot 2^{k-4}+4\cdot 2^{k-5} \nonumber\\
    &= 2^{k-5}(4+6+8+8)=2^k-2^{k-2}+2^{k-4}; \label{eq:A'5}
\end{align}
where in the last equality we use that $4+6+8+8=2^5-2^3+2$.

Finally, by the hand-shake lemma $\sum_v \deg_{A_k}(v)= \sum_{i=1}^{k-1} i |V_i(A_k)| = 2(2^{k-1}-1)$, which together with \eqref{eq:diA'} and \eqref{eq:A'5} yields
\begin{align*}
\sum_{i=1}^{2^{k-5}}d_i= \sum_{i=5}^{k-1} i |V_i(A_k)|=2^k-2 -\sum_{i=1}^{4} i |V_i(A_k)| = 2^{k-2}-2^{k-4}-2\le 2^{k-2}-2;
\end{align*}
establishing \eqref{eq:boundA'} for $k\ge 6$ and completing the proof.
\end{proof}

\subsubsection{Upper bounds}\label{subsec:upper_bal}

Throughout this section, for a graph $G$, let $|G|$ denote the number of edges in $G$.
The proof strategy of the next two lemmas is the following. Recall that if $G$ is a global amoeba with $m$ edges, then $\bal(n,G)=\ex(n,\cH_G)$. It then suffices to exhibit $H\in \cH_G$ (that is, $H\subset G$ with $\lfr{\frac{m}{2}}$ edges and no isolated vertices) to find, by \Cref{thm:bal=ex}, the  upper bound
\begin{align*}
\bal(n,G)= \ex(n,\cH_G)\le \ex(H,G).  
\end{align*}

We will thus construct subgraph classes of star forests $\{S_k, k\ge 5\}$ and $\{R_k, k\ge 5\}$ such that $S_k\in \cH_{T_k}$ and $R_k\in \cH_{B_k}$. The choice of star forests allows us to apply the following theorem from \cite[Theorem 3]{LHP13}.

\begin{theorem}[\cite{LHP13}]\label{thm:lidicky}
Let $H=\cup_{i=1}^k S^i$ be a star forest where $d_i$ is the maximum degree of $S^i$ and $d_1\ge d_2\ge \ldots \ge d_k$. For $n$
sufficiently large,
\begin{align*}
    \ex(n,H)=\max_{1\leq i\leq k}\left\{ 
    (i-1)(n-i+1)+{i-1\choose2}+\LFR{\frac{d_i-1}2(n-i+1)} \right\}. 
\end{align*}
\end{theorem}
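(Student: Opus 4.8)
The plan is to establish separately the lower bound $\ex(n,H)\ge \max_{1\le i\le k}(\cdots)$ via an explicit extremal family, and the matching upper bound by induction on the number $k$ of stars. For the lower bound, fix $1\le i\le k$ and put $s=i-1$, $t=n-i+1$. Let $G_i$ be the join $K_s\vee R$, where $R$ is a graph on $t$ vertices of maximum degree at most $d_i-1$ and exactly $\lfloor (d_i-1)t/2\rfloor$ edges (a near-$(d_i-1)$-regular graph, which exists once $n$ is large). Then $e(G_i)={s\choose 2}+st+\lfloor (d_i-1)t/2\rfloor$ is precisely the $i$-th term, so it remains to check that $G_i$ is $H$-free. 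Suppose $H$ embeds in $G_i$; let $c_j$ be the image of the center of $S^j$ and $S=V(K_s)$. The images of the stars are vertex-disjoint, so the $c_j$ are distinct; set $p=|\{j: c_j\in S\}|\le s<i$. Every $c_j$ lying in $V(R)$ has at most $d_i-1$ neighbours inside $R$, hence $S^j$ uses at least $\max(0,d_j-d_i+1)$ leaf-vertices in $S$, and all such leaves, together with the $p$ centers in $S$, are pairwise distinct vertices of $S$. Since $p<i$ and $d_1\ge\cdots\ge d_k$, at least $i-p\ge 1$ of the $i$ largest stars $S^1,\dots,S^i$ have their center in $R$, and each contributes $d_j-d_i+1\ge 1$ leaves to $S$. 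Thus $i-1=|S|\ge p+(i-p)=i$, a contradiction; so each $G_i$ is $H$-free.

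For the upper bound I would induct on $k$. The base case $k=1$ is the classical fact that an $H$-free ($=K_{1,d_1}$-free) graph has maximum degree at most $d_1-1$, hence at most $\lfloor (d_1-1)n/2\rfloor$ edges, matching the $i=1$ term. For the inductive step, let $G$ be an $H$-free graph on $n$ vertices (with $n$ large), set $H'=S^2\cup\cdots\cup S^k$ and $W=|V(H')|=\sum_{j\ge 2}(d_j+1)$, a constant, and let $v$ be a vertex of maximum degree $\Delta$. If $\Delta\ge d_1+W$, then $G-v$ is $H'$-free: otherwise an embedding of $H'$ in $G-v$ uses at most $W$ vertices, and $v$ still has at least $d_1$ neighbours outside them, producing a copy of $H$. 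Applying the induction hypothesis to $G-v$ and adding back the at most $n-1$ edges at $v$ gives $e(G)\le (n-1)+\ex(n-1,H')$; a direct computation shows that if $i'$ is the index attaining $\ex(n-1,H')$ then this bound equals the $(i'{+}1)$-th term of the target maximum for $H$ when $\Delta=n-1$, and is at most it otherwise. Hence this case is settled.

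The remaining case, $\Delta< d_1+W$, is where the real work lies: here every degree is bounded by a constant, yet possibly larger than $d_1-1$, so the crude bound $e(G)\le \Delta n/2$ overshoots the $i=1$ term by $\Theta(n)$. One handles this by a greedy extraction argument: if $e(G)$ exceeds $\lfloor (d_1-1)n/2\rfloor$ by more than a suitable constant, then repeatedly one finds a vertex of degree at least $d_j$ (possible because the average degree stays above $d_j-1$ throughout), peels off the corresponding copy of $S^j$ together with its $d_j+1$ vertices, and — since each removal deletes only $O(1)$ edges — after $k$ rounds one has assembled a copy of $H$, a contradiction; comparing the resulting bound $\lfloor(d_1-1)n/2\rfloor+O(1)$ with the $i=2$ term (which exceeds it by $\Theta(n)$ for $n$ large) closes the argument. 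I expect this bounded-degree subcase, together with the bookkeeping of the floor functions and the "$n$ sufficiently large" thresholds across the induction, to be the main obstacle; the lower-bound construction and the high-degree inductive step are comparatively routine.
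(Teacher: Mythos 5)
This statement is quoted from \cite{LHP13} (their Theorem~3); the paper gives no proof of it, so there is nothing internal to compare against and your argument has to be judged on its own. Your lower-bound construction ($K_{i-1}$ joined to a near-$(d_i-1)$-regular graph, with the counting argument that the $i$ largest stars would each need a private vertex of the $(i-1)$-clique) is correct and is the standard extremal example. Your high-degree inductive step is also sound: the telescoping identity $(n-1)+\bigl[(j-2)(n-j+1)+\binom{j-2}{2}+\lfloor\frac{d_j-1}{2}(n-j+1)\rfloor\bigr]=(j-1)(n-j+1)+\binom{j-1}{2}+\lfloor\frac{d_j-1}{2}(n-j+1)\rfloor$ does check out.

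The gap is in the bounded-degree case, and it is not just bookkeeping. Your greedy extraction can only ever yield $e(G)\le\lfloor\frac{(d_1-1)n}{2}\rfloor+C$ for some constant $C>0$, while the theorem demands the constant-free bound $\lfloor\frac{(d_1-1)n}{2}\rfloor$ whenever the $i=1$ term is the maximum. Your proposed rescue --- that the $i=2$ term exceeds the $i=1$ term by $\Theta(n)$ --- is false in general: the $i=2$ term is roughly $\frac{d_2+1}{2}n$ against $\frac{d_1-1}{2}n$ for $i=1$, so whenever $d_1\ge d_2+2$ (e.g.\ $H=K_{1,10}\cup K_2$, where the stated maximum is $\lfloor 9n/2\rfloor$ while the $i=2$ term is only $n-1$) the $i=1$ term dominates and your additive constant is not absorbed. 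The repair is to split the bounded-degree case on whether $\Delta\le d_1-1$ (then $e(G)\le\lfloor\frac{(d_1-1)n}{2}\rfloor$ exactly and you are done) or $d_1\le\Delta<d_1+W$ (then $G$ contains a copy of $S^1$ on a set $U$ of $d_1+1$ vertices, $G-U$ must be $H'$-free, at most $(d_1+1)\Delta=O(1)$ edges meet $U$, and $\ex(n-d_1-1,H')$ sits a full $n-O(1)$ below the corresponding terms for $H$ on $n$ vertices, which swallows the $O(1)$). With that case split the induction closes; as written, the argument does not establish the stated equality.
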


\begin{prop}\label{prop:Tk_upperbal}
For $n$ sufficiently large, 
$\bal(n,T_6)\le 2n-2$ and, for $k\ge 7$
\begin{align*}
    \bal(n,T_k) \le (F_{k-2}-2)n- {F_{k-2}-1\choose 2} +\delta_{k=7};
\end{align*}
where $\delta_{k=7}=1$ if $k=7$ and is zero otherwise. 
\end{prop}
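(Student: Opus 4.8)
The plan is to use the identity $\bal(n,T_k)=\ex(n,\cH_{T_k})$ from \Cref{thm:bal=ex}, together with the standard reduction $\ex(n,\cH_{T_k})\le \ex(n,S_k)$ for any single star forest $S_k\in\cH_{T_k}$, and then evaluate $\ex(n,S_k)$ via \Cref{thm:lidicky}. So the work splits into two parts: (1) exhibit an explicit spanning-edge-subgraph $S_k$ of $T_k$ which is a star forest, has exactly $\lfr{\frac{m}{2}}=F_k-1$ edges (where $m=2F_k-1$), and has no isolated vertices; (2) plug the degree sequence of $S_k$ into the \cite{LHP13} formula and check that the maximum over $i$ is attained at the term matching the claimed bound.

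\medskip

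First I would construct $S_k$. Since $T_k$ has (for $k\ge 4$) a unique vertex of each degree $i$ for $k-3\le i\le k-1$ and $|V_i(T_k)|=F_{k-1-i}$ copies for $3\le i\le k-2$ and $F_{k+1-i}$ for $i\in\{1,2\}$ by \Cref{lemma:Tk_deg}, a natural choice for $S_k$ is to take, around each of the "high-degree" vertices of $T_k$, a star consisting of all (or all but one, to fine-tune parity) of its edges to its lower-degree neighbours, choosing these stars to be vertex-disjoint and to cover exactly $F_k-1$ edges. The cleanest way is to fix a nested description: because $T_k=(H\cup J)+v_bv_c$ with $H\cong T_{k-1}$, $J\cong T_{k-2}$, one can set up a recursion $S_k = S_{k-1}\cup S_{k-2}\cup(\text{one extra star})$, or more directly just list stars centred at the vertices of degree $\ge 5$ (there are $F_{k-4}$ of them, by the computation in the proof of \Cref{prop:Tk_lowbal}). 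I expect $S_k$ to consist of one star of each degree $d$ for $5\le d\le k-1$ coming from the unique/high-degree vertices, plus several stars of degree $3$ and $4$, arranged so the total edge count is exactly $F_k-1$; the parity offset $\delta_{k=7}$ and the separate treatment of $k=6$ in the statement are precisely the artifacts of making this count land exactly on $\lfr{\frac m2}$.

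\medskip

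Second, with $S_k=\bigcup S^i$ and the degrees $d_1\ge d_2\ge\cdots$ of its star-components listed, I would apply \Cref{thm:lidicky}: $\ex(n,S_k)=\max_i\{(i-1)(n-i+1)+\binom{i-1}{2}+\lfr{\frac{d_i-1}{2}(n-i+1)}\}$. The claimed answer $(F_{k-2}-2)n-\binom{F_{k-2}-1}{2}+\delta_{k=7}$ should be exactly the $i=F_{k-2}-1$ term (note $F_{k-2}-1$ is the number of components of degree $\ge 3$, which are the ones that actually matter once $d_i\le 2$ the extra floor term contributes nothing), and one checks that this term dominates: increasing $i$ past that point adds a component of degree $\le 2$ so the marginal gain $(n-i+1)+\ldots$ is outweighed, and decreasing $i$ loses a linear-in-$n$ term. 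For $k=6$ one does the tiny case by hand: $m=2F_6-1=15$, $\lfr{m/2}=7$, exhibit an explicit $7$-edge star forest inside $T_6$ (e.g. a couple of stars of degrees $3,4$ or the like) and read off $\ex(n,\cdot)=2n-2$.

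\medskip

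The main obstacle I anticipate is part (1): pinning down a star forest $S_k\subset T_k$ with \emph{exactly} $F_k-1$ edges, no isolated vertices, and a controlled degree sequence whose \cite{LHP13} optimum is the advertised value. The recursive structure of $T_k$ makes existence clear, but getting the count exactly right (rather than $F_k-1\pm O(1)$) forces the careful bookkeeping that produces the $\delta_{k=7}$ correction and the separate $k=6$ case; a secondary nuisance is verifying that the maximum in \Cref{thm:lidicky} is attained where we claim, which is a monotonicity argument in $i$ but needs the inequality $d_i\ge 3$ to hold for all $i\le F_{k-2}-1$ and $d_i\le 2$ beyond — i.e. that the chosen $S_k$ has exactly $F_{k-2}-1$ components of degree at least $3$. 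Once $S_k$ is fixed, the rest is a routine substitution.
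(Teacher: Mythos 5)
Your overall strategy is exactly the paper's: exhibit a star forest $S_k\in\cH_{T_k}$, bound $\bal(n,T_k)=\ex(n,\cH_{T_k})\le \ex(n,S_k)$, and evaluate the right-hand side with \Cref{thm:lidicky}. However, the part you defer as ``the main obstacle'' is where your sketch goes wrong, in two related ways. First, the structure you guess for $S_k$ --- stars around the high-degree vertices, and in particular ``$F_{k-2}-1$ components of degree $\ge 3$'' --- is not realizable: a star forest with $F_{k-2}-1$ components each of degree at least $3$ has at least $3F_{k-2}-3$ edges, while $S_k$ must have exactly $F_k-1=2F_{k-2}+F_{k-3}-1$ edges, and $3F_{k-2}-3>2F_{k-2}+F_{k-3}-1$ as soon as $F_{k-4}>2$, i.e.\ for all $k\ge 8$. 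The paper's $S_k$ instead uses only stars of degree $4$ and $3$ together with a substantial number of \emph{independent edges} ($F_{k-4}$ stars of degree $4$, $F_{k-5}$ of degree $3$, and $F_{k-4}-1$ single edges), built by a Fibonacci-type recursion $S_k=S_{k-1}\cup S^{+}_{k-2}$ with auxiliary forests $S^{+}_k$ obtained by appending one independent edge.

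Second, your claim that components with $d_i\le 2$ ``don't actually matter'' because the floor term vanishes is backwards, and if followed it would produce the wrong leading coefficient. In \Cref{thm:lidicky} the term $(i-1)(n-i+1)$ grows by roughly $n$ each time $i$ increases, regardless of $d_i$; passing from a degree-$3$ star to a single edge changes the bound by roughly $(1+\frac{1-3}{2})n=0\cdot n$, so the comparison is decided by the constant terms, not settled in favour of stopping at the degree-$\ge 3$ components. Evaluating only at the number of degree-$\ge 3$ components would give leading coefficient $F_{k-3}$, which is strictly smaller than the claimed $F_{k-2}-2$ for $k\ge 8$; it is precisely the $F_{k-4}-1$ independent edges that push the total component count to $F_{k-2}-1$ and yield $h_k(F_{k-2}-1,n)=(F_{k-2}-2)n-\binom{F_{k-2}-1}{2}$ as the maximum (with the tie at $k=7$ producing $\delta_{k=7}$). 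So the construction and the identification of the maximizing index both need to be done as in the paper before the ``routine substitution'' goes through.
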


\begin{proof}
We define a class of star forests $\mathcal{S}=\{S_k | k\ge 5\}$, together with an auxiliarly class $\mathcal{S}^+=\{S^+_k|k\ge 5\}$ such that $S_k,S^+_k\subset T_k$ for $k\ge 5$.

First, let $S_5$ be a star of degree 4 and let $S_6$ be a star forest of one star of degree 4 and one star of degree 3. Then, let $S^+_5$ and $S^+_6$
be obtained, respectively, by adding an independent edge to each of $S_5$ and $S_6$; see Figure~\ref{fig:star_forest}. For $k\ge 7$, let $S_k$ be the disjoint union of two copies of $S_{k-1}$ and $S^+_{k-2}$, while defining $S^+_k$ as the disjoint union of two copies of $S^+_{k-1}$ and $S^+_{k-2}$. That $S_k,S^+_k\subset T_k$ is clear from  Figure~\ref{fig:star_forest} and the construction of $T_k$.

Note that $|S^+_5|=5=F_5$, $|S^+_6|=8=F_6$ and $|S_5|=4=F_5-1$. It can be verified inductively, taking the base cases $S^+_5$ and $S^+_6$, the star forest $S^+_k$ has $F_{k-4}$ stars of maximum degree 4, $F_{k-3}$ stars of maximum degree 3 and $F_{k-3}$ independent edges. Moreover, $|S^+_k|=|S_k|+1=F_k$ for all $k\ge 5$; the difference between $S^+_k$ and $S_k$ being an additional, independent edge. 
 
For $1\le i< F_{k-2}$, let $s_{k,i}$ denote the maximum degree in the $i$-th star of $S_k$, the above argument shows that 
\begin{align*}
    s_{k,i}=\begin{cases}
        4 &1\le i\le F_{k-4},\\
        3 &F_{k-4}< i \le F_{k-3},\\
        1 &F_{k-3}<i\le F_{k-2}-1.
    \end{cases}
\end{align*}
For $1\le i< F_{k-2}$, let 
\begin{align*}
    h_k(i,n)= (i-1)(n-i+1)+{i-1\choose2}+\LFR{\frac{s_{k,i}-1}2(n-i+1)}. 
\end{align*}
By \Cref{thm:lidicky}, $\ex(n, S_k)=\max_{1\le i<F_{k-2}} h_k(i,n)$. Since the statement in \Cref{thm:lidicky} holds for $n$ sufficiently large, we might as well assume that $n\ge 2F_{k-2}-2$. This implies that $(i-1)(n-i+1)$ is increasing in $i$ and so, as a function of $i$, $h_k(i,n)$ is increasing in each of the intervals $[1,F_{k-4}], [F_{k-4}+1, F_{k-3}]$ and $[F_{k-3}+1,F_{n-2}-1]$. In other words, 
\begin{align*}
    \ex(n,S_k) = \max \{h_k(F_{k-4},n), h_k(F_{k-3},n), h_k(F_{k-2}-1,n)\}.
\end{align*}

We then proceed to compare the terms $h_k(i,n)$ for $i\in \{F_{k-4}, F_{k-3}, F_{k-2}-1\}$. We will be concerned, as $n$ is assumed to be large, with the leading term of the following expressions. Letting $\varepsilon_n\in \{0,\frac{1}{2}\}$, we have  
\begin{align}
    h_k(F_{k-4},n)&=\frac{2F_{k-4}+1}{2}(n-F_{k-4}+1) +{F_{k-4}-1\choose 2}+\varepsilon_n \nonumber\\
    &= \frac{2F_{k-4}+1}{2} n -\frac{(F_{k-4}+3)(F_{k-4}-1)}{2}+\varepsilon_n, \label{eq:h1}\\
    h_k(F_{k-3},n)&=F_{k-3}(n-F_{k-3}+1) +{F_{k-3}-1\choose 2} \nonumber\\
    &=F_{k-3}n -\frac{(F_{k-3}+2)(F_{k-3}-1)}{2}, \label{eq:h2}\\
    h_k(F_{k-2}-1,n)&= (F_{k-2}-2)(n-F_{k-2}+2) +{F_{k-2}-2\choose 2} \nonumber\\
    &= (F_{k-2}-2)n-\frac{(F_{k-2}-2)(F_{k-2}-1)}{2}. \label{eq:h3}
\end{align}
We claim that, together with \Cref{thm:lidicky}, these expressions yield
\begin{align}\label{eq:exSk}
    \ex(n,S_k) = \begin{cases}
        h_k(F_{k-3},n) & k=6,\\ 
        h_k(F_{k-2}-1,n)+1 & k=7,\\ 
        h_k(F_{k-2}-1,n) & k\ge 8.
    \end{cases} 
\end{align}
For the case $k\ge 8$, observe that $F_{k-4}+\frac{1}{2}<F_{k-3}<F_{k-2}-2$, implies that  \eqref{eq:exSk} holds, for $n$  sufficiently large that the constant terms are negligible compared to $n$. For the cases $k\in \{6,7\}$, the expressions in \eqref{eq:h1}--\eqref{eq:h3} above are reduced to 
\begin{align*}
    h_6(F_{2},n)&=\frac{3n}{2}+\varepsilon_n, \qquad \;\, \quad \quad
    h_6(F_{3},n)=2n-2,\qquad
    h_6(F_{4}-1,n)=n-1,\\
    h_7(F_{3},n)&=\frac{5(n-1)}{2}+\varepsilon_n, \qquad 
    h_7(F_{4},n)=3n-5,\qquad
    h_7(F_{5}-1,n)=3n-6;
\end{align*}
which verifies the remaining cases of \eqref{eq:exSk} since $h_7(F_{4},n)= h_7(F_{5}-1,n)+1$.
  
Finally, note that the number of edges in $S_k$ is \begin{align*}
    \sum_{i=1}^{F_{k-2}-1} d_i&=4F_{k-4}+3(F_{k-3}-F_{k-4})+(F_{k-2}-F_{k-3}-1)\\
    &=F_{k-4}+2F_{k-3}+F_{k-2}-1\\
    &= F_{k}-1,
\end{align*}
while $F_k$ has $2F_{k}-1$ edges, by \Cref{lemma:Tk_deg}. In other words, $S_k\in \cH_{T_k}$; which implies that  
$\bal(n,T_k)\le \ex(n,S_k)$. Computing the values of \eqref{eq:exSk}, using the expressions in \eqref{eq:h1}-\eqref{eq:h3}, we obtain the desired upper bounds for $\bal(n,k)$ with $n$ sufficiently large.
\end{proof}

\begin{prop}\label{prop:Bk_upperbal}
 For $k\geq 5$ and $n$ sufficiently large, 
 $$\bal(n,B_k)\leq(2^{k-3}-1)n-{2^{k-3}\choose 2}.$$
\end{prop}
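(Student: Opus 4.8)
The plan is to mimic the strategy used for \Cref{prop:Tk_upperbal}: since $B_k$ is a global amoeba with $m = 2^{k-1}$ edges, \Cref{thm:bal=ex} gives $\bal(n,B_k) = \ex(n,\cH_{B_k})$, so it suffices to exhibit a star forest $R_k \in \cH_{B_k}$ (i.e. $R_k \subset B_k$ with exactly $\lfr{m/2} = 2^{k-2}$ edges and no isolated vertices) and then bound $\ex(n,R_k)$ via \Cref{thm:lidicky}. First I would define recursively a class $\mathcal{R} = \{R_k \mid k \ge 5\}$ of star forests with $R_k \subset B_k$, in parallel with the auxiliary class $\mathcal{S}^+$ from the previous proof (recalling $B_k = A_k + u z$ and that $A_k$ is built from two copies of $A_{k-1}$). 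A natural choice: take $R_k$ to be a disjoint union built from copies of $S_k^+$ (the star-forest subgraphs of the $T$-side construction are not directly applicable, so I would instead directly track subgraphs of $A_k$). Concretely, since $A_k$ is two copies of $A_{k-1}$ joined by an edge at their max-degree vertices, and $A_k$ has a vertex of degree $k-1$, I would pick star forests inside $A_k$ whose largest star has degree about $k-3$ (analogous to how $S_k$ picked degree-$4$ stars). I would set up base cases $R_5, R_6$ explicitly and a recursion $R_k = $ (two copies of $R_{k-1}$) $\sqcup$ (something of size $2^{k-4}$ from the joining structure), verifying by induction that $|R_k| = 2^{k-2}$ and $R_k \subset B_k$.

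Next I would compute, by induction, the \emph{profile} of the star forest $R_k$: how many stars of each maximum degree it contains. Because $A_k$ doubles at each step while its max degree increases by one, I expect $R_k$ to have a single star of degree $\approx k-3$, a single one of degree $k-4$, and then geometrically growing numbers ($1, 2, 4, \dots$) of stars of degrees $k-5, k-6, \dots$ down to degree $1$, with the total star count being $2^{k-3}$ — this is the index that will produce the coefficient $2^{k-3}-1$ in the bound. With the profile $d_1 \ge d_2 \ge \dots \ge d_t$ (where $t = 2^{k-3}$ or so) in hand, I would apply \Cref{thm:lidicky}, writing $h_k(i,n) = (i-1)(n-i+1) + \binom{i-1}{2} + \lfr{\frac{d_i-1}{2}(n-i+1)}$ and noting, as in the previous proof, that for $n$ large $h_k(i,n)$ is increasing on each interval where $d_i$ is constant, so the maximum is attained at one of the "breakpoint" indices. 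Comparing leading coefficients: at the last breakpoint $i = t$ (all remaining stars are single edges, $d_t = 1$) the coefficient of $n$ is $t - 1 = 2^{k-3}-1$, whereas at earlier breakpoints $i$ the coefficient is roughly $i - 1 + (d_i-1)/2$; I would check that $2^{k-3}-1$ dominates all of these for $k \ge 5$ (the geometric growth of the star count outpaces the linear degrees), so that $\ex(n,R_k) = h_k(2^{k-3},n) = (2^{k-3}-1)(n - 2^{k-3}+1) + \binom{2^{k-3}-1}{2} = (2^{k-3}-1)n - \binom{2^{k-3}}{2}$, which is exactly the claimed bound.

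The main obstacle I anticipate is getting the star forest $R_k$ and its profile exactly right so that the total edge count lands on the nose at $2^{k-2}$ \emph{and} the relevant breakpoint index is precisely $2^{k-3}$; an off-by-one or a miscounted star degree would throw off both the membership $R_k \in \cH_{B_k}$ and the final binomial coefficient. A second, more delicate point is verifying the comparison of the $h_k(i,n)$ values at the breakpoints for the small cases $k = 5, 6$ (and possibly $k = 7$), where the constant terms are not yet negligible — as in the $T_k$ proof, these may need to be checked by hand, and one should confirm no correction term (analogous to the $\delta_{k=7}$ in \Cref{prop:Tk_upperbal}) is needed here; presumably the cleaner doubling structure of $B_k$ avoids such a correction, but that must be checked. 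A minor additional step is confirming $R_k$ has no isolated vertices, which is immediate since a star forest by definition has none, provided we do not accidentally include degenerate components. Once the profile computation and the breakpoint comparison are pinned down, plugging into \Cref{thm:lidicky} and simplifying $(2^{k-3}-1)(n-2^{k-3}+1) + \binom{2^{k-3}-1}{2}$ to $(2^{k-3}-1)n - \binom{2^{k-3}}{2}$ is routine algebra.
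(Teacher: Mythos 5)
Your high-level strategy is exactly the paper's: reduce to $\ex(n,\cH_{B_k})$ via \Cref{thm:bal=ex}, exhibit a star forest $R_k\in\cH_{B_k}$ with $2^{k-2}$ edges and $2^{k-3}$ components, and apply \Cref{thm:lidicky}, observing that the maximum over breakpoints is attained at the last index $i=2^{k-3}$, which yields $(2^{k-3}-1)n-\binom{2^{k-3}}{2}$. The breakpoint comparison and the final algebra are also as in the paper (and you are right that no $\delta$-type correction is needed here).

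However, the heart of the proof is the explicit construction of $R_k$, and that is where your sketch has a genuine gap. The profile you propose --- one star of degree about $k-3$, one of degree $k-4$, then $1,2,4,\dots$ stars of degrees $k-5,k-6,\dots$ down to $1$ --- is internally inconsistent: those multiplicities sum to roughly $2^{k-5}+1$ components, not the $2^{k-3}$ you need, and the edge count does not land at $2^{k-2}$ either. (It looks like you are shadowing the degree profile of $A_k$ itself from \Cref{lemma:Ak_deg}, which counts vertices, not a half-edge star forest.) Note that with $2^{k-3}$ components and $2^{k-2}$ edges the average star must have only two edges, so stars of degree growing with $k$ are the wrong shape. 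The paper's construction avoids all of this: $R_4\subset A_4$ is simply a degree-$3$ star plus one independent edge ($4$ edges, $2$ components), and $R_k$ is two disjoint copies of $R_{k-1}$, which sit inside $B_k=(A_{k-1}\cup B_{k-1})+uv$ since both $A_{k-1}$ and $B_{k-1}$ contain $R_{k-1}$. This gives exactly $2^{k-4}$ stars of degree $3$ and $2^{k-4}$ independent edges, hence $2^{k-3}$ components and $3\cdot 2^{k-4}+2^{k-4}=2^{k-2}$ edges on the nose, and only two breakpoints to compare. Until you replace your profile with a concrete, verifiable construction of this kind, the proof is not complete.
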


\begin{proof}
Let $R_4\subset B_4$ be a star forest comprised of a star of degree 3 and an independent edge; see Figure~\ref{fig:star_forest_Bk}. For $k\ge 5$, let $R_k$ be the disjoint union of two copies of $R_{k-1}$. 
Note that, since $A_4$ also contains a copy of $R_4$, by the construction of $A_k$ and $B_k$, we may assume that $R_k\subset B_k$ for all $k\ge 5$. 

We can verify inductively that, for all $k\ge 4$, $R_k$ contains precisely $2^{k-4}$ stars of degree 3 and $2^{k-4}$ independent edges. Letting $r_{k,i}$ denote the maximum degree in the $i$-th star of $R_k$, we conclude that
\begin{align*}
    r_{k,i}=\begin{cases}
        3 &1\le i \le 2^{k-4},\\
        1 &2^{k-4}<i\le 2^{k-3}.
    \end{cases}
\end{align*}

Similarly to the previous proof, we will show that, if $n\ge 2^{k-2}$ (and sufficiently large),  $\ex(n,R_k)=h'(2^{k-3},n)$ where, 
\begin{align*}
    h'(i,n)= (i-1)(n-i+1)+{i-1\choose2}+\LFR{\frac{r_{k,i}-1}2(n-i+1)}, 
\end{align*}
for $1\le i\le  2^{k-3}$. 
Again, the condition that $n$ is at least as large as twice the number of components in $R_k$ guarantees that $h'(i,n)$ is increasing in each of the intervals $[1,2^{k-4}]$ and$[2 ^{k-4}+1, 2^{k-3}]$. And so, it suffices to compare the terms
\begin{align*}
    h'(2^{k-4},n)&=2^{k-4}(n-2^{k-4}+1) +{2^{k-4}-1\choose 2},\\
    h'(2^{k-3},n)&=(2^{k-3}-1)(n-2^{k-3}+1) +{2^{k-3}\choose 2}.
\end{align*}
For $k\ge 5$, we have $\frac{3}{2}<2^{k-4}<2^{k-3}-1$. Consequently, for $n$ sufficiently large, and together with \Cref{thm:lidicky}, we obtain
\begin{align*}
    \ex(n,R_k) = \max \{h'(1,n), h'(2^{k-4},n), h'(2^{k-3},n)\}= h'(2^{k-3},n).
\end{align*}
This completes the proof as the number of edges in $R_k$ is $$\sum_{i=1}^{2^{k-3}} d_i=3\cdot2^{k-4}+2^{k-4} = 2^{k-2} =\LFR{\frac{2^{k-1}}{2}};$$ 
that is, $R_k\in \cH_{B_k}$ and so
\begin{align*}
    \bal(n,B_k)\le \ex(n,R_k)&= (2 ^{k-3}-1)(n-2^{k-3}+1) +{2^{k-3}-1\choose 2}\\
    &= (2 ^{k-3}-1)n-{2^{k-3}\choose 2}.
\end{align*}
\end{proof}

\begin{figure}
    \centering
    \includegraphics[width=0.4\textwidth]{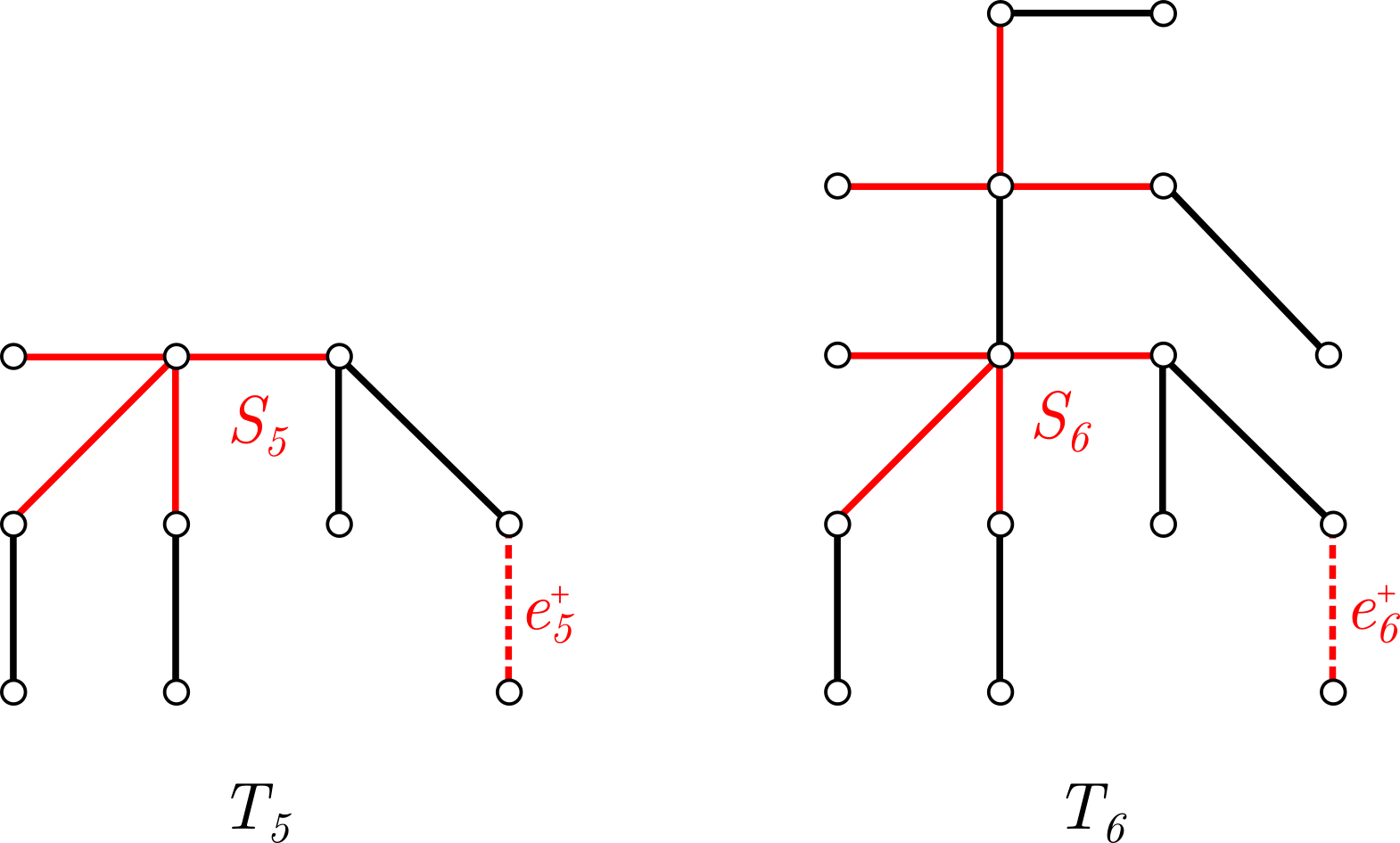}
    \caption{Star forests $S_5$ and $S_6$ in red. The dotted red edges $e_5^+$ and $e_6^+$ define $S_5^+ = S_5 + e_5^+$ and $S_6^+ = S_6 + e_6^+$, respectively.}
    \label{fig:star_forest}
\end{figure}

\begin{figure}
    \centering
    \includegraphics[width=0.4\textwidth]{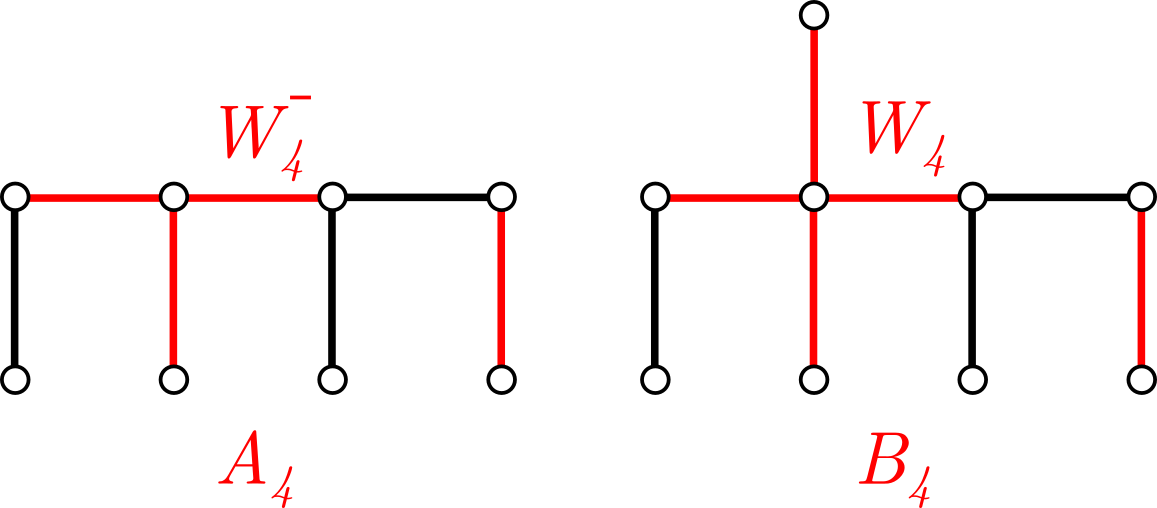}
    \caption{Star forests $W^-_4$ and $W_4$ in red; $W^-_4$ is contained in both $A_4$ and $B_4$.}
    \label{fig:a4b4}
\end{figure}

The proof of \Cref{thm:TABk_bal} is a direct consequence of Propositions \ref{prop:Tk_lowbal} and \ref{prop:Tk_upperbal} for the class $\mathcal{T}$, and \Cref{lem:AleqB_bal}
and Propositions \ref{prop:Ak_lowbal} and \ref{prop:Bk_upperbal} for the classes $\mathcal{A}$ and $\mathcal{B}$.    

\subsubsection{Balancing numbers of the small cases}

Recall that \Cref{thm:bal=ex} provides an equivalence between the balancing number of a global amoeba $G$ and the extremal Turán of $\mathcal{H}_G$ as defined in \eqref{dfn:H_G}. This is straightforward to explore for the first few elements of our case study classes $\mathcal{T}, \mathcal{A}$ and $\mathcal{B}$; which coincidentally, by \Cref{thm:TABk_bal}, are the unique elements of such classes with a constant balancing number.

Within the proof of the next lemma, we make use of the following facts.

 \begin{claim}\label{claim:1}
     For a connected graph $C$ with $m$ edges,
     \begin{enumerate}[label=(\roman*)]
         \item if $m\geq1$, then $C$ contains an edge, $K_2$;
         \item if $m\geq2$, then $C$ contains a path on two edges, $P_2$;
         \item if $m\geq3$, then $C$ is either a triangle or, contains either a star $S_3$ or a copy of $P_{m}$.
     \end{enumerate}
 \end{claim}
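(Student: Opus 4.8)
\textbf{Proof plan for Claim~\ref{claim:1}.}
The three parts are elementary structural facts about connected graphs, so the plan is to prove each by a short, self-contained argument; parts (i) and (ii) are essentially immediate, and all the real content is in part (iii). For (i), a connected graph with $m\ge 1$ edges has at least one edge by definition, and that single edge together with its two endpoints is a copy of $K_2$. For (ii), if $m\ge 2$ then $C$ has at least three vertices (a connected graph on two vertices has exactly one edge); pick any vertex $v$ incident to an edge, and since $C$ is connected and has more than one edge, either $v$ has degree at least $2$, giving two edges at $v$ forming $P_2$, or every edge-endpoint has degree $1$, which forces $C$ to be a single edge — contradiction. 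So $C$ contains $P_2$.

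For part (iii), the plan is to take a spanning tree $F$ of $C$ (which exists since $C$ is connected) and argue on $F$ together with one possible extra edge. If $C$ itself is a tree, then $F=C$ has exactly $m\ge 3$ edges; a tree on at least $4$ vertices either has a vertex of degree $\ge 3$, yielding the star $S_3$, or has maximum degree $2$, in which case it is a path, and a path with $m$ edges is exactly $P_m$. If $C$ is not a tree, it contains a cycle; let $g$ be its girth. If $g=3$ we are done, as $C$ contains a triangle. If $g\ge 4$, then take a shortest cycle $Z$: it has at least $4$ edges, so it contains $P_3\supset P_2$ — but we need more, namely either $S_3$ or $P_m$. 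Here I would instead argue directly on a spanning tree: a spanning tree $F$ of $C$ has $n-1$ edges where $n=|V(C)|$, and since $C$ has $m\ge 3$ edges and $m\le \binom n2$, we get $n\ge 3$; if $n\ge 4$ we again split on whether $F$ has a vertex of degree $\ge 3$ (giving $S_3\subseteq F\subseteq C$) or is a path, and if $F$ is a Hamiltonian path while $C$ has an extra chord, a short case check shows $C$ still contains $S_3$ (the chord creates a degree-$3$ vertex) unless... — this is where care is needed. The cleanest route, which I would adopt, is: if $\Delta(C)\ge 3$ then $C\supseteq S_3$ and we are done; otherwise $\Delta(C)\le 2$, so $C$ is a path or a cycle, and since $C$ is connected with $m\ge 3$ edges it contains a path on $m$ edges (the whole path if $C=P_m$; any $m$ consecutive edges if $C$ is a cycle, which is also a triangle when $m=3$), giving $P_m\subseteq C$ or the triangle case.

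The main obstacle is purely organizational: making sure the trichotomy in (iii) is genuinely exhaustive and that the "$P_m$" conclusion is correctly interpreted (the claim only asserts $C$ \emph{contains} $P_m$, not that $C$ equals it, so for a cycle on $m\ge 4$ edges we extract a spanning-minus-one-edge path). Once the case split is organized as "$\Delta(C)\ge 3$ versus $\Delta(C)\le 2$", each branch is a one-line observation, so I expect no genuine difficulty — only the need to phrase the boundary case $m=3$ (triangle versus $P_3$) so that it matches the "either...or..." wording of the statement.
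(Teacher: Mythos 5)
Your case split for part (iii) — $\Delta(C)\geq 3$ gives $S_3$, and $\Delta(C)\leq 2$ forces $C$ to be a path or a cycle — is the right organization, and parts (i) and (ii) are fine. But the cycle branch contains a genuine error: you claim that if $C$ is a cycle with $m$ edges then ``any $m$ consecutive edges'' form a copy of $P_m$. An $m$-cycle has exactly $m$ edges, so $m$ consecutive edges of $C_m$ are \emph{all} of its edges and form the cycle itself, not a path; the longest path contained in $C_m$ has only $m-1$ edges (it omits one edge of the cycle). Consequently $C_m$ for $m\geq 4$ is not a triangle, contains no $S_3$ (all degrees equal $2$), and contains no $P_m$ — so statement (iii) as written is actually \emph{false}, and no proof of it can be completed. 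This is precisely the ``boundary'' you flagged as needing care and then glossed over.

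The repair is to weaken the conclusion in the cycle case: either replace ``a copy of $P_m$'' by ``a copy of $P_{m-1}$'' (equivalently, a path on $m$ vertices, which both the path and the cycle on $m$ edges contain), or add ``or $C$ is a cycle'' as a fourth alternative. Either corrected version is proved exactly by your $\Delta\geq 3$ versus $\Delta\leq 2$ dichotomy, and it still suffices for every invocation of the Claim later in the paper: there part (iii) is only used to extract $S_3\cup K_2$ or $P_3\cup K_2$ from a connected component with $6$ edges, and $P_4$ from a connected graph with $7$ edges and $\Delta=2$, and indeed $P_5\subseteq C_6$ and $P_6\subseteq C_7$, so the weaker conclusion $P_{m-1}\subseteq C$ is enough. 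Your spanning-tree detour at the start of (iii) can be dropped entirely once the maximum-degree dichotomy is adopted.
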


\begin{lemma}\label{thm:cte_balancing}
We have the following balancing numbers.
\begin{itemize}
    \item[\it i)] $\bal(n,A_2)=0$, $\bal(n,A_3)=1$ and $\bal(n,A_4)=3$,
    \item[\it ii)] $\bal(n,B_1)=0$, $\bal(n,B_2)=\bal(n,B_3)=1$ and $\bal(n,B_4)=6$,
    \item[\it iii)] $\bal(n,T_1)=\bal(n,T_2)=0$, $\bal(n,T_3)=\bal(n,T_4)=1$ and $\bal(n,T_5)=6.$
\end{itemize}    
\end{lemma}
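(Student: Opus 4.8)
The strategy is to apply \Cref{thm:bal=ex}, which tells us that for each of these small global amoebas $G$, $\bal(n,G) = \ex(n,\cH_G)$ for $n$ large. So the task reduces, in each case, to computing (or tightly bounding from both sides) the Turán number $\ex(n,\cH_G)$ of the class $\cH_G$ of subgraphs of $G$ on $\lfr{e(G)/2}$ edges with no isolated vertices. For the smallest cases ($A_2 \cong B_1 \cong K_2$ has $1$ edge, so $\lfr{1/2}=0$; $A_3 \cong T_3$, $B_2$, $T_4$, $B_3$ have $e(G)/2 \in \{1\}$ after flooring) the classes $\cH_G$ are very simple — e.g. $\cH_{K_2}$ is empty, forcing $\ex(n,\cH_G)=0$, and when $\lfr{e(G)/2}=1$ the only no-isolated-vertex subgraph on one edge is $K_2$, so $\cH_G = \{K_2\}$ and $\ex(n,\{K_2\}) = 0$ whenever $n \ge 2$, giving $\bal = 1$ (one needs $\min\{|R|,|B|\} > 0$ but any coloring with an edge of each color already contains a balanced copy since the graph is small enough — here the slight subtlety is checking that having one edge of each color suffices, which follows from $G$ being a global amoeba and the one-edge-in-each-color copy argument in the proof of \Cref{thm:bal=ex}). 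I would handle all the "$\bal = 0$ or $1$" cases in one short paragraph using exactly this observation.

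\textbf{The middle cases.} For $A_4$ ($7$ edges, $\lfr{7/2}=3$), $B_4$ ($8$ edges, half is $4$), and $T_5$ ($e(T_5)=2F_5-1=9$, $\lfr{9/2}=4$) the class $\cH_G$ contains subgraphs on $3$ or $4$ edges. Here is where Claim~\ref{claim:1} enters: for the lower bound on $\bal = \ex(n,\cH_G)$ I exhibit an explicit $\cH_G$-free graph — namely a disjoint-union/clique-type construction (for instance $K_3 \cup K_3$ or $K_r$ on few vertices plus isolated vertices, or a bounded-size graph) on $n$ vertices whose edge count matches the claimed balancing number. For the upper bound I must show that \emph{every} graph on $n$ vertices with more than the claimed number of edges contains some member of $\cH_G$. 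This is where Claim~\ref{claim:1} is the workhorse: once a graph has $\ge 4$ (or $\ge 5$, $\ge 6$) edges we can locate inside it, via (i)--(iii), a small subgraph on exactly $\lfr{e(G)/2}$ edges with no isolated vertices that happens to sit inside $G$ — I would need to check which $3$-edge or $4$-edge graphs (a triangle, a star $S_3$, a path $P_3$ or $P_4$, a star-plus-edge, two disjoint $P_2$'s, etc.) actually occur as subgraphs of $A_4$, $B_4$, $T_5$, and argue that a graph with enough edges but few vertices is forced to contain one of them. Concretely, $\bal(n,A_4)=3$ means: any graph on $n$ vertices with $\ge 4$ edges contains a $3$-edge no-isolated-vertex subgraph of $A_4$, and $K_3 \cup \overline{K_{n-3}}$ (with $3$ edges) contains none; similarly $\bal(n,B_4)=6$ uses a $6$-edge extremal graph (say $K_4$ minus nothing, or $K_3\cup K_3$) and the fact that $7$ edges force a $4$-edge subgraph of $B_4$; and $\bal(n,T_5)=6$ likewise.

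\textbf{Main obstacle.} The delicate point is the upper-bound direction for $A_4$, $B_4$, $T_5$: one must enumerate the relevant small subgraph classes $\cH_G$ precisely and verify that a graph with $\binom{r}{2}+1$ edges (for the appropriate small $r$) cannot avoid all of them — essentially a finite but fiddly case analysis showing the extremal configurations are exactly small cliques (plus isolated vertices). Claim~\ref{claim:1}(iii) is tailored to make this tractable: it says a connected graph with $3$ edges is a triangle, or contains $S_3$, or contains $P_m=P_3$; combined with the structure of $A_4$ (which contains $P_4$, $S_3$, $P_3\cup K_2$, etc.) one argues any $4$-edge graph on any vertex set contains a member of $\cH_{A_4}$ unless it is essentially $K_3$ plus an isolated-ish edge — but $K_3\cup K_2$ has $4$ edges and one must check it \emph{does} contain a $3$-edge member of $\cH_{A_4}$, forcing the extremal number down to $3$. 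I expect most of the work to be this verification, and I would organize it as: (1) list $\cH_G$ explicitly for each of the three graphs; (2) identify the $\cH_G$-free extremal graph and its edge count, giving the lower bound; (3) use Claim~\ref{claim:1} plus a short case check to prove no graph with one more edge is $\cH_G$-free, giving the matching upper bound; (4) invoke \Cref{thm:bal=ex} to translate these Turán numbers into the stated balancing numbers.
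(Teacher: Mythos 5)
Your plan is essentially the paper's own proof: both reduce every case to computing $\ex(n,\cH_G)$ via \Cref{thm:bal=ex}, identify $\cH_{T_4}=\cH_{B_3}$, $\cH_{A_4}$ and $\cH_{T_5}=\cH_{B_4}$ as the classes of all forests on two, three and four edges respectively, get the lower bounds from small cliques ($K_3$ for $A_4$, $K_4$ for $T_5$ and $B_4$), and get the matching upper bounds by a case analysis on the components of a graph with one more edge, driven exactly by Claim~\ref{claim:1}. Your check that $K_3\cup K_2$ contains $P_2\cup K_2\in\cH_{A_4}$ is the same observation the paper makes in its $\ell=2$ case.

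Two concrete points need repair. First, in the ``$\bal=0$ or $1$'' paragraph you compute $\ex(n,\{K_2\})=0$ and then assert that this ``gives $\bal=1$''; that is inconsistent with the theorem you are invoking, which would output $\bal(n,G)=\ex(n,\cH_G)=0$. So your stated derivation does not actually produce the value $1$ claimed for $A_3\cong T_3$ and $B_2$, and your own parenthetical (``any coloring with an edge of each color already contains a balanced copy'') points to $0$, not $1$; you must either supply a genuinely different argument for these path cases or confront the tension between the computation $\cH_{P_3}=\{K_2\}$ and the target value. (The paper offers no computation here, dismissing the path cases as straightforward.) Second, your candidate extremal graph $K_3\cup K_3$ for the $T_5/B_4$ lower bound fails: it contains two disjoint paths on two edges, i.e.\ $2P_2\in\cH_{T_5}$, which is precisely the configuration the upper-bound case $\ell=2$ exploits. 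Only your other candidate, $K_4$ --- six edges on four vertices, hence containing no forest with four edges and no isolated vertices --- yields $\ex(n,\cH_{T_5})\ge 6$, and that is the example the paper uses.
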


\begin{proof}
 The first few elements in $\mathcal{T}, \mathcal{A}$
 and $\mathcal{B}$ are paths (or isolated vertices) and their balancing numbers are straightforward to obtain. For the rest of the cases, $T_4$ (which equals $B_3$), $A_4 , T_5$ and $B_4$, we make use of the equivalence in \Cref{thm:bal=ex}. 
 
 Observe that $\mathcal H_{T_4}=\mathcal H_{B_3}$, $\mathcal{H}_{A_4}$ and $\mathcal H_{B_4}=\mathcal H_{T_5}$ consist of all forests on two, three and four edges, respectively; see Figures~\ref{fig:Ha4} and \ref{fig:Ht4}. In particular, $\mathcal H_{T_4}=\mathcal H_{B_3}$ consists of exactly two non-isomorphic graphs: $2K_2$ and $P_2$. Therefore $\ex(n,\mathcal H_{T_4})=\ex(n,\mathcal H_{B_3})=1$, and by Theorem~\ref{thm:bal=ex} we have
 $\bal(n,T_4)=\bal(n,B_3)=1$.

Consider $\mathcal{H}_{A_4}$. To prove that $\ex(n,\mathcal{H}_{A_4})\geq3$, it suffices to consider a triangle as the extremal graph. 
 To prove that $\ex (n,\mathcal{H}_{A_4})\leq3$, let $G$ be a graph on 4 edges with no isolated vertices and connected components $C_1,\dots,C_\ell$ which are in decreasing order in terms of the number of edges. In each of the distinct cases below, we find some $H_i\in \mathcal{H}_{A_4}$.
 
\begin{figure}
    \centering
\includegraphics[scale=0.25]{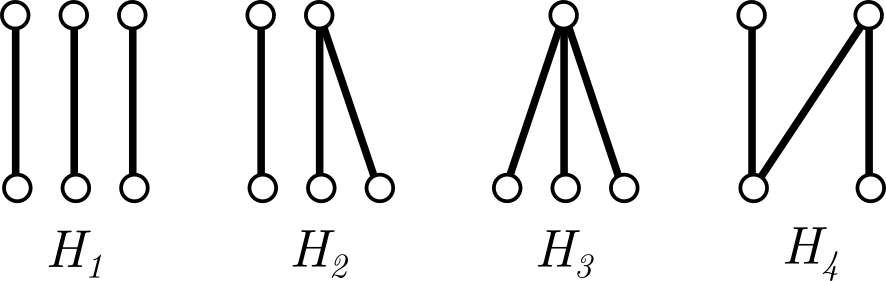}
    \caption{Family $\mathcal{H}_{A_4}$.}
    \label{fig:Ha4}
\end{figure}

When $\ell\geq3$, we may take one edge from each of three components to find $3K_2\cong H_1\subseteq G$.
For the case $\ell=2$, it suffices to apply Claim~(ii) to $C_1$ and Claim~(i) to $C_2$ to produce the subgraph $H_2$, isomorphic to $P_2\cup K_2$. Lastly, if $\ell=1$ ($G$ is connected), we consider the maximum degree $\Delta$. If $\Delta \geq3$, then $G$ contains a copy of the star $H_3$; on the other hand, if $\Delta=2$, since $G$ is connected we may find a path on three edges; that is $H_4\cong P_3$ as a subgraph of $G$. Having covered all possible cases for the components $C_1, \ldots, C_\ell$ we infer $\ex(n,\mathcal{H}_{A_4})=3$, and by \Cref{thm:bal=ex}, $\bal(n,A_4)=3$.
 
 Finally, consider $\mathcal{H}_{T_5}$. We proceed to prove that $\ex(n, \mathcal{H}_{T_5})=6$. 
 To prove the lower bound, it suffices to take a complete graph on four vertices as the extremal graph in both cases. To prove the upper bound, let $G$ be a graph on 7 edges with no isolated vertices and connected components $C_1,\dots,C_\ell$ ordered decreasing in size. In each of the distinct cases below, we find some $F_i\in \mathcal{H}_{T_5}$.

\begin{figure}
    \centering
    \includegraphics[scale=0.25]{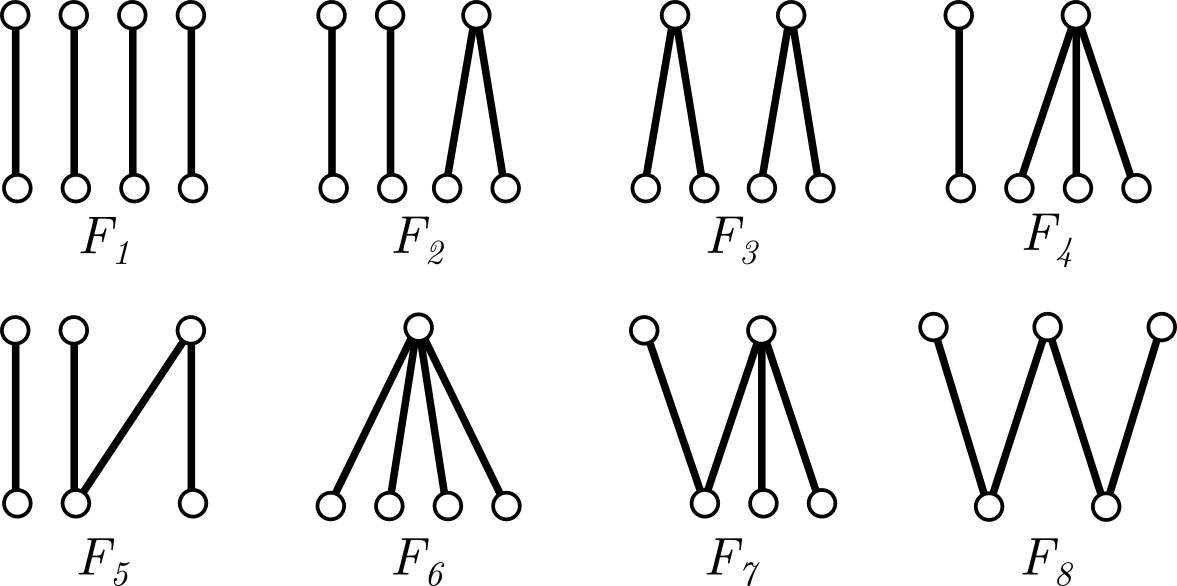}
    \caption{Family $\mathcal{H}_{T_5}=\mathcal{H}_{B_4}$.}
    \label{fig:Ht4}
\end{figure}

 Suppose that $\ell\geq4$; we may take one edge from each of three components to find $3K_2\cong H_1\subseteq G$. If $\ell=3$, then $e(C_1)\geq 2$ and so $C_1$ contains a copy of $P_2$. We then may take one edge from $C_2$ and $C_3$ to find a subgraph $F_2$ isomorphic to $P_2\cup K_2\cup K_2$. In the case when $\ell=2$ both $C_1$ and $C_2$ have at least two edges each, we can find, by means of two uses of Claim~(ii), that $F_3\subseteq G$ which is isomorphic to two disjoint 2-paths. If, on the other hand, $C_1$ has 6 edges and $C_2$ has one, by Claim~(iii), we can find $F_4\cong S_3\cup K_2$ or $F_5\cong P_3\cup K_2$.
Finally, if $\ell=1$, then $G$ is connected and we consider the maximum degree $\Delta$. First, suppose that $\Delta\geq4$, then $G$ contains a star $F_6\cong S_4$. Now consider $\Delta=3$ and let $v$ be a vertex of maximum degree. We claim there is a vertex $w$ such that $d(v,w)=2$. Otherwise, all vertices would be adjacent to $v$, but $\deg(v)=\Delta=3$, so $G$ could not have 7 edges. We conclude the existence of $F_7\subseteq G$, a star $S_3$ with an edge joined to $w$. If $\Delta=2$, then Claim~(iii) yields $F_8\cong P_4\subseteq G$. Therefore, $\ex(n,\mathcal H_{B_4})=\ex(n,\mathcal H_{T_5})=6$, and by \Cref{thm:bal=ex} we obtain that $\bal(n,T_5)=\bal(n,B_4)=6$.
 \end{proof}

\section{Algorithms and implementation}\label{sec:algorithm}

 In this section, we present an algorithm based on Theorem~\ref{thm:Tk_Local} that finds, given any arbitrary permutation $p$ on the vertices of $T_k$, a corresponding sequence of feasible edge replacements in time $\Theta(|V(T_k)|^2)$. Similar algorithms exist for any of the local amoebas constructed in Section~\ref{sec:recursive-loc-am}.

 We begin by defining a class of objects called \texttt{Fer} with two attributes: a sequence of feasible edge replacements and its corresponding permutation. We denote the length of the sequence of the object \texttt{fer} by \texttt{len(fer)}. The product of two \texttt{Fer} objects multiplies their permutations and concatenates the edge replacements, updating the labels. In this implementation, products of permutations (denoted by concatenation) and products of \texttt{Fer} objects (denoted by $*$) are written \textbf{left-to-right}, in contrast with the rest of the paper. This is to be consistent with Python's symbolic math library \texttt{SymPy}. It is enough to produce a hash map that links every permutation in a generating set of the symmetric group to a \texttt{Fer} object to find the \texttt{Fer} object of any permutation.
 
 We assume that the hash maps for $k=1,2,3,4$, i.e.\ the recursive basis of Theorem~\ref{thm:Tk_Local}, have been previously computed and include all of the permutations (not just the generators). Achieving this is out of the scope of this paper, but our implementation \cite{github} performs this step as well.

 We describe each step of the main program, as well as each case of the function \texttt{RecursiveFer}.
\begin{enumerate}
    \item[Step 1:] Write the permutation as a product of disjoint cycles $C=(a_0,a_1,\dots,a_m)$. Then, $C=(y_0,a_0)(y_0,a_1)\cdots(y_0,a_m)(y_0,a_0)$. This step yields a list of transpositions of the form $(y_0,x)$ where $x$ is moved by the permutation and whose product is the permutation.
    
    \item[Step 2:] We use \texttt{RecursiveFer} to find a \texttt{Fer} object for each transposition. It is enough to pass the primitive $x$ as an argument, and this makes it easier to employ memoization; otherwise, we would need to either encode permutations into immutable objects or defining an object hashing. Python's \texttt{functools} library contains a memoization decorator called \texttt{@cache} which saves the values of the function in an internal hash map. Alternatively, we can achieve the same result by updating \texttt{hashMap} and checking, at the beginning of each iteration, if the arguments have been passed before. This costs $O(1)$ time and drastically shortens the execution. For instance, in our experiments, computing \texttt{RecursiveFer}($(0\ 1)\cdots(464\ 465)$, $k=13$) without memoization takes about 293\% longer than with it. We describe each case here, however details can be found in the proof of Theorem~\ref{thm:Tk_Local}.
    \begin{itemize}
        \item[$x=b$:] Notice that $(y_0,x)=\varphi^{-1}(y_0,\varphi^{-1}(x))\varphi$, and that we can find a \texttt{Fer} object for $(\varphi^{-1}(x),y_0)$ since $\varphi^{-1}(x)\in B$.

        \item[$x\in A\cup B$:] Since $A\cup B\cong T_{k-1}$, a simple recursion works for this case.

        \item[$x\in C$:] We may pick $x_0$ as any neighbor of $c$. In the labeling of Theorem~\ref{thm:Tk_Local}, $x_0=c+1$ works, for example. Observe that $(y_0,x)=\rho^{-1}(y_0,\rho^{-1}(x))\rho$, and that we can find a \texttt{Fer} object for $(\rho^{-1}(x),y_0)$, because $\rho^{-1}(x)\in A$.

        \item[$x\in D$:] Notice that $(y_0,x)=(y_0,x_0)(x,x_0)(y_0,x_0)$. We can find a \texttt{Fer} for $(y_0,x_0)$ since $x_0\in C$ in the same $k$. And we can find one for $(x,x_0)$ because $x-c\in B_k\cap(C_{k-2}\cup D_{k-2})$. After finding the \texttt{Fer} object, we need to shift back the labels to $T_k$ by adding $c$ to all of them.
    \end{itemize}
    
    \item[Step 3:] We get a \texttt{Fer} object corresponding to the permutation obtained from multiplying (using the product $*$) all elements in \texttt{transpositions}. Namely, a sought \texttt{Fer} object for the given permutation.
\end{enumerate}

\begin{algorithm}[!ht]
\DontPrintSemicolon
  
  \caption{Factoring a permutation into a list of feasible edge replacements \texttt{Fer} in $T_k$.}
  \label{alg_tk}
  \SetKwFunction{FHash}{RecursiveFer}
  
  \SetKwProg{Fn}{Function}{:}{}
  @cache\tcp*{Memoization decorator for dynamic execution of recursive function}
    \Fn{\FHash{$x$, $k$}}{
    \If{$k\leq4$}{\KwRet hashMap[$(y_0,x)$] \tcp*{\texttt{Fer} is known for the base cases.}}

    Let $a,b,c,d$ be the roots of $T_k$ and $A,B,C,D$ the sets as defined in Theorem~\ref{thm:Tk_Local}.
    
    \If{$x = b$}{
        $\varphi=$ isomorphism $B\to C\cup D$ with \texttt{Fer} object $(ab\to ac)$\;
        \KwRet $\varphi*$RecursiveFer($\varphi^{-1}(x)$, $k$)$*\varphi$}
    \tcc{If the program gets to this line, it must mean that $x\neq b$.}
    
    \If{$x\in A\cup B$}{\KwRet RecursiveFer($x$, $k-1$)}
    
    \If{$x\in C$}{
        $\rho=$ isomorphism $A\to C$ with \texttt{Fer} object $(cd\to ad)$\;
        \KwRet $\rho*$RecursiveFer($\rho^{-1}(x)$, $k$)$*\rho$
    }
    \If{$x\in D$}{
        $x_0=$ a neighbor of the root $c$ in $C$.\;
        $y_0x_0$ = RecursiveFer($x_0$, $k$)\;
        $xx_0$ = RecursiveFer($x-c$, $k-2$)$ + c$\tcp*{$+c$ shifts all labels in the object by $c$ units.}
        \KwRet $y_0x_0*xx_0*y_0x_0$
    }
    }

    \KwInput{$k$, permutation}
    hashMap = $\{\text{generators for }k\leq4\}$ \tcp*{Save known \texttt{Fer} objects in a hash map.}
    
    \tcc{Step 1: Factor permutation into transpositions of the form $(y_0,x)$.}
    transpositions = $\varnothing$\tcp*{Initialize empty list.}
    \For{$C$ cycle of permutation}{
        \For{$x\in C$}{transpositions.add($y_0$,$x$)\tcp*{Add elements to list.}}
        transpositions.add($y_0$,$a_0$)\tcp*{$a_0$ is the first element of $C$.}
    }
    
    \tcc{Step 2: Find \texttt{Fer} for each transposition.}
    \For{$x$ where $(y_0,x)$ in transpositions}{
        xFer = RecursiveFer($x$, $k$)\;
        hashMap[$(y_0,x)$] = xFer\tcp*{Save result in a hash map.}
    }
    \tcc{Step 3: Multiply hashMap values to get a \texttt{Fer} for permutation.}
    $$\text{fer}=\prod_{(y_0,x)\in\text{transpositions}}\text{hashMap}[(y_0,x)]$$
    \KwOutput{fer}
\end{algorithm}

 For the analysis, we care about expressing the time and space complexity in terms of the number of vertices of $T_k$, namely $n(k)=2F_k$. During Step 1, the maximum number of transpositions any permutation gets factored in is achieved by conjugates of $(0\ 1)(2\ 3)(4\ 5)\cdots(n(k)-2\ n(k)-1)$. Thus, with memoization and fixed $k$, the function \texttt{RecursiveFer}($x$,$k$) gets called at most $n(k)-1$ times. It remains to study how long it takes to find a \texttt{Fer} object for each transposition.

 Fix $k>5$. We may compute, beforehand and for all $q\leq k$, the isomorphisms $\varphi$ and $\rho$, the roots $a,b,c,d$ and the sets $A,B,C,D$ of $T_q$. With memoization, $2F_k$ may be computed in time $O(k)$. The roots can be found in constant time since all is needed are smaller values $F_q$. The algorithm only needs the sets to evaluate Booleans of the form $x\in S$. But, picking the appropriate labeling of $T_k$, this would only require comparing $x$ to the endpoints of $S$, namely two of the roots. Hence, this can still be achieved in constant time. The isomorphisms can be generated in linear time and then applied in linear time.
 
 Multiplying two \texttt{Fer} objects takes linear time in the length of the right-most sequence of replacements, since the labels must be updated one by one. Call this time $m(\ell)$, so that $m(\ell)=O(\ell)$. The product of the permutations takes constant time.

 \begin{remark}\label{rmk:trans_3}
     It is possible to find \texttt{Fer} objects for all transpositions of the form $(y_0,x)$ in $T_k$, when $k\leq4$, of length 3 or less.
 \end{remark}
  
  Thus, in general, we have the following result.

 \begin{lemma}\label{prop:ell}
     Let $\ell(k)$ be the maximum length of a \texttt{Fer} object produced by \texttt{RecursiveFer}($x$, $k$). Then, for $k\geq3$, $$\ell(k)\leq k^2-3k+3.$$
 \end{lemma}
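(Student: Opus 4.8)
The plan is to track, for each level $k$ and each label $x$, the length $L_k(x)$ of the \texttt{Fer} object returned by \texttt{RecursiveFer}($x$,$k$), and to set $\ell(k)=\max_x L_k(x)$; the lemma is exactly the claim $\ell(k)\le k^2-3k+3$. I would first read a recurrence for $L_k(x)$ directly off the four recursive cases of \texttt{RecursiveFer} for $k\ge 5$, using the notation $a,b,c,d$ and $A,B,C,D$ of Theorem~\ref{thm:Tk_Local}. If $x=b$, the returned object conjugates a call on $\varphi^{-1}(b)$ by the length-one replacement $ab\to ac$; since $\varphi(b)=c\neq b$ we have $\varphi^{-1}(b)\in B\setminus\{b\}$, so that inner call takes the $A\cup B$ branch and drops to level $k-1$, giving $L_k(b)\le\ell(k-1)+2$. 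If $x\in(A\cup B)\setminus\{b\}$, the call is forwarded unchanged to level $k-1$, so $L_k(x)\le\ell(k-1)$. If $x\in C$, the returned object conjugates a call on $\rho^{-1}(x)\in A\setminus\{b\}$ by the length-one replacement $cd\to ad$, and the inner call again drops to level $k-1$, so $L_k(x)\le\ell(k-1)+2$. If $x\in D$, the returned object is $y_0x_0*xx_0*y_0x_0$ with the \emph{fixed} choice $x_0=c+1\in C$ (a neighbour of the root $c$ in $H_2$) and with $xx_0$ produced at level $k-2$, so $L_k(x)=2\,L_k(c+1)+L_{k-2}(x-c)$.

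The crux is to control $\lambda(k):=L_k(c+1)$. Here I would use that the labelling of $T_k$ in Theorem~\ref{thm:Tk_Local} is nested so that $\rho^{-1}(c+1)$ is exactly the corresponding fixed neighbour one level down: then $c+1\in C$ enters the ``$x\in C$'' branch, is conjugated by a length-one replacement, and the inner call drops to level $k-1$ on that neighbour, yielding $\lambda(k)\le\lambda(k-1)+2$. With $\lambda(4)\le 3$ (a hash-map lookup, which has length at most $3$ by Remark~\ref{rmk:trans_3}), induction gives $\lambda(k)\le 2k-5$ for $k\ge 4$. Substituting into the $D$-case bound gives $L_k(x)\le 2(2k-5)+\ell(k-2)=\ell(k-2)+4k-10$, and taking the maximum over the four cases yields, for $k\ge 5$,
\[
\ell(k)\ \le\ \max\{\,\ell(k-1)+2,\ \ \ell(k-2)+4k-10\,\}.
\]

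It then remains to solve this recurrence by strong induction on $k$. For the base cases $k\in\{3,4\}$, \texttt{RecursiveFer} returns a hash-map entry, so $\ell(k)\le 3\le k^2-3k+3$. For $k\ge 5$, the inductive hypothesis gives $\ell(k-1)+2\le(k-1)^2-3(k-1)+3+2=k^2-5k+9\le k^2-3k+3$ (using $k\ge 3$) and $\ell(k-2)+4k-10\le(k-2)^2-3(k-2)+3+4k-10=k^2-3k+3$; the maximum of the two is $\le k^2-3k+3$, closing the induction.

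The main obstacle I anticipate is the middle step: carefully checking, from the algorithm together with the nested labelling, that every recursive sub-call lands in the branch asserted above --- in particular that $\varphi^{-1}(b)\neq b$, that $\rho^{-1}(x)\in A\setminus\{b\}$ for $x\in C$, and, most delicately, that $\rho^{-1}(c+1)$ is again a fixed neighbour of the lower root rather than a vertex of $D$. This last point is precisely what makes the ``$+2$ per level'' in $\lambda(k)$ a genuine constant, hence what turns the $D$-case estimate into a linear (rather than exponential) recurrence; the remaining arithmetic is routine and was sketched above.
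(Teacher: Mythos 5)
Your proof is correct and follows essentially the same route as the paper's: a separate induction showing the fixed-neighbour call $R(c_k+1,k)$ has length at most $2k-5$, the four-case analysis yielding $\ell(k)\le\max\{\ell(k-1)+2,\ \ell(k-2)+4k-10\}$, and an induction to close. The only cosmetic difference is that you verify the quadratic bound directly by strong induction, whereas the paper introduces the auxiliary recurrence $r(k)=4k-10+r(k-2)$, shows $r(k)-r(k-1)\ge 4$, and solves it explicitly as $r(k)=k^2-3k+1-2(-1)^k$.
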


 \begin{proof}
    Write $R=$ \texttt{RecursiveFer}, for short. First, we argue that, for $k\geq 4$, $\len R(x_0^{(k)},k)\leq2k+5$ by induction on $k$. For simplicity, let us pick $x_0^{(k)}=c_k+1$, which corresponds to the labeling we give in Theorem~\ref{thm:Tk_Local}. We have that $\len R(c_4+1,4)\leq3$ by Remark~\ref{rmk:trans_3}. Now suppose that $k>4$. According to case $x\in C$ in the algorithm and the fact that $\rho_k^{-1}(c_k+1)=a_k+1=c_{k-1}+1$ (see Figure~\ref{general_diag}), $$R(c_k+1,k)=\rho_k*R(\rho_k^{-1}(c_k+1),k-1)*\rho_k=\rho_k*R(c_{k-1}+1,k-1)*\rho_k$$ has length at most $2(k-5)+3+2=2(k-4)+3$, as desired.
    
    Next, consider the recurrence $r(k)=4k-10+r(k-2)$, $r(3)=r(4)=3$. Inductively, $r(k)-r(k-1)\geq4$ for all $k\geq5$. Indeed, since $r(5)=20-10+r(3)=13$, we have that $r(5)-r(4)=13-3=10\geq4$ and $r(6)-r(5)=4\cdot6-10+3-13=4$. Then,
    \begin{align*}
        r(k)-r(k-1)&=4k-10+r(k-2)-4(k-1)+10-r(k-3)\\
       &=r(k-2)-r(k-3)+4>4.
    \end{align*}
    
    Finally, we use induction on $k$ to prove that $\ell(k)\leq r(k)$ for all $k\geq3$. By Remark~\ref{rmk:trans_3}, we know that $\ell(3)=\ell(4)=3=r(3)=r(4)$.
    
    Let us study the computation of $R(x,k)$. Following Algorithm~\ref{alg_tk}. In the case when $x\in A\cup B\setminus\{b\}$, $\len R(x,k)=\len R(x,k-1)\leq \ell(k-1)\leq r(k-1)<r(k)$. If $x\in C\cup\{b\}$, since $\varphi$ and $\rho$ consist of a single edge-replacement, $\len R(x,k)\leq \ell(k-1)+2\leq r(k-1)+2<r(k)$. Finally, if $x\in D$, we need to do two sub-computations: $R(c_k+1,k)$ and $R(x-c_k,k-2)$.

 Using the first claim of this proof, $$\len R(x, k)\leq 2\len R(c_k+1,k)+\ell(k-2)\leq2(2k+5)+r(k-2)=r(k).$$ This concludes the induction.

 Now, it remains to find an explicit formula for $r(k)$. But this a linear recurrence with constant coefficients of order 1, which is easily solved using the theory of eigenvectors. Namely, $r(k)=k^2-3k+1-2(-1)^k\leq k^2-3k+3$, from where the result follows.
\end{proof}

 \begin{theorem}
     The running time of Algorithm~\ref{alg_tk} is $\Theta(n^2)$ and the space complexity is $O((k-4)(n(k)-1))=O(n\log n)$.
 \end{theorem}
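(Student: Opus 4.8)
The plan is to establish the two complexity bounds separately, drawing together the pieces already assembled in this section. For the running time, I would argue both the upper bound $O(n^2)$ and the matching lower bound $\Omega(n^2)$. For the \emph{upper bound}: by the Step~1 analysis, any permutation factors into at most $n(k)-1$ transpositions of the form $(y_0,x)$, so \texttt{RecursiveFer} is invoked at most $n(k)-1$ times (memoization ensures each distinct $x$ is computed once). Each such call, excluding its recursive sub-calls, does a bounded number of \texttt{Fer}-multiplications and isomorphism applications; by \Cref{prop:ell} each produced \texttt{Fer} object has length $O(\ell(k)) = O(k^2)$, and multiplying two \texttt{Fer} objects costs $m(\ell) = O(\ell)$ time. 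The subtle point is that the total work is \emph{not} simply (number of calls)$\times$(cost per call) $= O(n)\cdot O(k^2)$, because $k = \Theta(\log n)$ would give only $O(n\log^2 n)$; instead, Step~3 multiplies together up to $n(k)-1$ \texttt{Fer} objects, and the running product grows in length, so the cost of Step~3 alone is $\sum_{j=1}^{n(k)-1} O(\mathrm{len\ of\ }j\text{-th partial product})$. Since each transposition contributes a \texttt{Fer} object of length $O(\ell(k)) = O(k^2) = O(\log^2 n)$, the partial products have length up to $O(n\log^2 n)$, and summing over $n$ multiplications would naively give $O(n^2 \log^2 n)$; to get the clean $O(n^2)$ I would need to observe that the final \texttt{Fer} object has length $\Theta(n)$ in the worst case (most transpositions use the short, $O(1)$-length base-case replacements, and only $O(\log n)$ of the recursive steps inflate a given transposition's object), so the partial products have length $O(n)$ and Step~3 costs $\sum_{j=1}^{O(n)} O(j) = O(n^2)$. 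The remaining work in Steps~1--2 is $O(n)$ calls each producing an $O(\log^2 n)$-length object, i.e.\ $O(n\log^2 n) = o(n^2)$, so the total is $O(n^2)$.

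For the \emph{lower bound} $\Omega(n^2)$: I would exhibit a permutation $p$ — concretely the product of disjoint transpositions $(0\ 1)(2\ 3)\cdots(n(k)-2\ \ n(k)-1)$, or a conjugate thereof, already flagged as the worst case in the Step~1 analysis — for which Step~1 produces $\Theta(n)$ transpositions, a constant fraction of which require recursive descent to depth $\Theta(k) = \Theta(\log n)$, so that $\Theta(n)$ of the resulting \texttt{Fer} objects have length $\Theta(\log n)$ (or at least $\Omega(1)$), and the concatenation in Step~3 forces the algorithm to touch $\Theta(n)$ labels in each of $\Theta(n)$ successive products — any algorithm that actually outputs the full list of feasible edge-replacements must write $\Omega(n)$ of them for this permutation and update labels across each concatenation, yielding $\Omega(n^2)$ total. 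Combined with the upper bound this gives the tight $\Theta(n^2)$.

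For the \emph{space complexity}, the dominant storage is the \texttt{hashMap}, which holds one \texttt{Fer} object for each of the $n(k)-1$ transpositions $(y_0,x)$ encountered, plus the $O(1)$-many base-case tables (whose size is a fixed constant independent of $k$, by the assumption that the $k\le 4$ hash maps are precomputed). By \Cref{prop:ell}, each stored \texttt{Fer} object has length at most $\ell(k) \le k^2 - 3k + 3 = O((k-4)^2)$; however, the bound claimed in the statement is $O((k-4)(n(k)-1))$, so I would instead bound the \emph{total} stored length by summing the lengths produced along a single recursive chain: each call to \texttt{RecursiveFer}$(x,k)$ that descends does so at most $k-4$ times before hitting a base case, and each level adds $O(1)$ replacements (the isomorphisms $\varphi,\rho$ are single edge-replacements) except the $x\in D$ branch which roughly doubles; so the per-transposition object has length $O(k-4)$ in the typical recursion and the $O((k-4)^2)$ bound of \Cref{prop:ell} is the worst over all branches — hence over all $n(k)-1$ stored objects the aggregate is $O((k-4)(n(k)-1))$. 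Finally, I substitute $k = \Theta(\log n(k))$, using $n(k) = 2F_k$ and $F_k = \Theta(\varphi^k)$ so that $k = \Theta(\log n)$ and $n(k)-1 = \Theta(n)$, to conclude $O((k-4)(n(k)-1)) = O(n\log n)$.

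The main obstacle I anticipate is the running-time \emph{upper} bound, specifically reconciling the naive estimate — $\Theta(n)$ \texttt{RecursiveFer} calls, each potentially producing a \texttt{Fer} object of length up to $\ell(k) = \Theta(\log^2 n)$, followed by $\Theta(n)$ concatenations of growing partial products — against the claimed clean $\Theta(n^2)$. The resolution requires the non-obvious observation that for \emph{any} input permutation the final output \texttt{Fer} object has length only $O(n)$ (not $O(n\log^2 n)$): although a single transposition can inflate to length $\Theta(\log^2 n)$, a permutation with $\Theta(n)$ moved points is "spread out" enough that all but $O(\log n)$ of its transpositions resolve via the $O(1)$-length base cases of Remark~\ref{rmk:trans_3}, so the total replacement count stays linear. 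Getting this counting argument rigorous — pinning down exactly how the sets $A,B,C,D$ partition the label range at each recursion level, and showing that deep recursion is triggered for only a logarithmic number of the transpositions — is the delicate part; everything else (Steps~1 and~2 costing $o(n^2)$, the $\Omega(n^2)$ lower bound via the output-size argument, and the space bound via \Cref{prop:ell} plus $k=\Theta(\log n)$) is comparatively routine.
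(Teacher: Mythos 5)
Your accounting locates the quadratic cost in the wrong place, and once that is corrected your argument no longer produces $O(n^2)$ from any correct source. In the algorithm's stated cost model, multiplying two \texttt{Fer} objects costs time linear in the length of the \emph{right-most} operand only, so accumulating the product in Step~3 left-to-right costs $\sum_j O(\len(f_j)) = O\bigl(n\,\ell(k)\bigr) = O(n\log^2 n)$: the growing partial product is never re-traversed, Step~3 is \emph{not} the bottleneck, and your proposed ``resolution'' (that only $O(\log n)$ of the transpositions recurse deeply, keeping the output length at $O(n)$) is both unnecessary and false for the worst-case input, where every one of the $\Theta(n)$ transpositions triggers a full recursive descent. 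Meanwhile you dismiss Steps~1--2 as $o(n^2)$ by bounding each call ``excluding its recursive sub-calls'' and never reinstating them. The paper's quadratic bound comes precisely from those sub-calls: in the $x\in D$ branch a single invocation of \texttt{RecursiveFer} spawns work at level $k$ (resolving through $k-1$) and at level $k-2$, giving the recurrence $t(k)=t(k-1)+t(k-2)+O(k^2)=O(\phi^k)=O(n)$ for the worst-case cost of \emph{one} transposition; multiplied by the $O(n)$ transpositions this yields $O(n^2)$, and a conjugate of $(0\ 1)(2\ 3)\cdots(n(k)-2\ \ n(k)-1)$ forces all values of $x$ to be processed, which is how the paper justifies the matching lower bound. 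Your own $\Omega(n^2)$ argument, resting on Step~3 ``touching $\Theta(n)$ labels in each of $\Theta(n)$ successive products,'' inherits the same misreading of the multiplication cost and therefore collapses with it.

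The space argument is closer in spirit but also does not close as written: Lemma~\ref{prop:ell} gives only a per-object length bound of $O(k^2)$, and summing that over the $n(k)-1$ stored objects yields $O(k^2\, n)$ rather than the claimed $O((k-4)(n(k)-1))$; the ``typical'' per-level growth of $O(1)$ that you invoke to repair this is exactly the statement you would need to prove, and you flag it yourself as unestablished. In short, the final answer is correct, but the mechanism you propose for the upper bound contradicts the algorithm's cost model, the lower bound fails for the same reason, and the key counting claim you describe as delicate is in fact false for the worst-case input.
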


\begin{proof} 
 Let $t(x,k)$ be the time needed to compute \texttt{RecursiveFer}($x$, $k$). By assumption, $t(x,k)=O(1)$ for all $x$ and for all $k\leq4$. Suppose that $c$ is the maximum of such constants. Inductively, $$t(x,k)=\begin{cases}
     t(\varphi^{-1}(x),k-1)+m(\ell(k-1)), & x=b\\
     t(x,k-1), & x\in A\cup B\setminus\{b\}\\
     t(\rho^{-1}(x),k-1)+m(\ell(k-1)), & x\in C\\
     t(\rho^{-1}(x_0),k-1)+t(x-c,k-2)+m(\ell(k-1))+m(\ell(k-2)), & x\in D.
 \end{cases}$$

 If $t(k)$ is the worst time of all values of $x$, then, succinctly, $$t(k)=t(k-1)+t(k-2)+O(k^2)=cF_k+O(k^2)=O(\phi^k),$$ where $\phi$ is the golden ratio.

 In terms of $n$, $t(k)\approx\frac{cn}2$, which joined with our previous observations, we have that the time complexity is $O(n^2)$.
 
 The best case is when the input is a transposition $(y_0,x)$ with $x\in A_4\setminus\{y_0\}$. Since the sequence of sets $A_k$ is increasing, this means \texttt{RecursiveFer}$(x,k)$ will positively evaluate $x\in A_q$ $k-4$ times and then look up the generator in the hash map of $T_4$ in constant time to produce an output of length at most 3. In other words, the algorithm takes linear time (and linear space complexity too) in $k$ for this input, or $O(\log n)$. However, if the input is a conjugate of $(0\ 1)(2\ 3)(4\ 5)\cdots(2F_k-2\ 2F_k-1)$, the algorithm will evaluate \texttt{RecursiveFer}$(x,k)$ once for all $x$.
 \end{proof}
 
 To close off, we point out that since the proof of Theorem~\ref{thm:Tk_Local} uses transpositions to generate the symmetric groups, an algorithm that factors $(0\ 1)(2\ 3)(4\ 5)\cdots(2F_k-2\ 2F_k-1)$ more efficiently than Algorithm~\ref{alg_tk} likely uses a very different approach (see open questions of Section~\ref{sec:conclusion}).

 Some technical improvements may stem from the following two observations. \begin{itemize}
     \item A \texttt{Fer} sequence can be simplified in some cases. For instance, the sequence $(1\ 2\to0\ 1)(2\ 3\to1\ 2)$ is equivalent to $(2\ 3\to0\ 1)$. The algorithm produces long sequences, and thus simplification may become indispensable for practical purposes. A simple version of this is implemented in \cite{github}.
     \item Ideally, automorphisms should correspond to the trivial edge replacement $(\emptyset\to\emptyset)$, but our algorithm does not account for this. It is not immediately evident when a product of generators is an automorphism.
 \end{itemize}

\section{Conclusion and open problems}\label{sec:conclusion}

We finish this paper by stating some open problems that are left for future research concerning the topic of amoebas. It would be interesting to explore if there is a way to characterize global and local amoebas via constructions. However, this problem seems to be quite ambitious. The problems we state here concern the construction of amoebas that try to go in that direction.

\begin{enumerate}

\item Generalize the construction of~\Cref{thm:recursion}.

\item Look for other general ways of constructing global or local amoebas.

\item Characterize local amoeba trees.

\item Characterize global amoeba trees.

\item Study the balancing number of other amoeba families.

\end{enumerate}

\end{document}